\title{Harmonic maps from Kähler manifolds}
\author{%
Brice Loustau\footnote{Heidelberg University and Heidelberg Institute of Theoretical Studies. 69120 Heidelberg, Germany.\newline
E-mail: \url{bloustau@mathi.uni-heidelberg.de}}}
\date{} 
\renewcommand\tableofcontents{%
    \@starttoc{toc}%
}
\begin{document}

\pdfbookmark[1]{Title and abstract}{Title}
\hypersetup{pageanchor=false}
\begin{titlepage}
\maketitle
\thispagestyle{empty}
\bigskip

\begin{abstract}

This report attempts a clean presentation of the theory of harmonic maps from complex and Kähler manifolds
to Riemannian manifolds. 
After reviewing the theory of harmonic maps between Riemannian manifolds initiated by Eells--Sampson  and the Bochner technique,
we specialize to K\"ahler domains and introduce pluriharmonic maps. We prove a refined Bochner formula due to Siu and Sampson and its main consequences, 
such as the strong rigidity results of Siu. We also recount the applications to symmetric spaces of noncompact type and their relation to Mostow rigidity.
Finally, we explain the key role of this theory for the nonabelian Hodge correspondence 
relating the character variety of a compact Kähler manifold and the moduli space of Higgs bundles.

\bigskip \bigskip \bigskip

\noindent \textbf{Key words and phrases:}
Harmonic maps $\cdot$ Pluriharmonic maps $\cdot$ Kähler manifolds $\cdot$ Symmetric spaces $\cdot$ Bochner formula $\cdot$ Rigidity
$\cdot$ Higgs bundles $\cdot$ Nonabelian Hodge

\bigskip
\noindent \textbf{2000 Math. Subject Classification:} 
Primary: 
58E20; 
Secondary: 
53C43 $\cdot$ 
53C55 $\cdot$ 
32W50 $\cdot$ 
53C24 

\bigskip

\end{abstract}

\bigskip \bigskip
\setcounter{tocdepth}{1}
{\small {\tableofcontents}}
\end{titlepage}
\hypersetup{pageanchor=true}
\addtocounter{page}{1}

\cleardoublepage\phantomsection 
\section*{Introduction}
\addcontentsline{toc}{section}{Introduction}

The theory of harmonic maps between Riemannian manifolds was properly started by Eells and Sampson \cite{MR0164306}. A smooth map $f \colon M \to N$
is \emph{harmonic} if it is a critical point of the energy functional 
\begin{equation}
\mathbf{E}(f) = \frac{1}{2} \int_M \Vert \upd f \Vert^2 \, \upd v\,. 
\end{equation}
Equivalently, $f$ solves the corresponding Euler--Lagrange equation:
\begin{equation}
 \Delta f = 0
\end{equation}
where the \emph{Laplacian} $\Delta$, more commonly denoted $\tau(f)$ and called \emph{tension field}, is a nonlinear operator generalizing the Riemannian Laplacian. 
We recall its definition in \autoref{sec:HarmonicMaps} and explain why $\Delta f = 0$ is equivalent to the harmonicity of the one-form $\upd f \in \Omega^1(M, f^*\upT N)$ in the sense of Hodge theory.

The nonlinear PDE $\Delta f = 0$ is typically determined and Eells and Sampson proved the existence of harmonic maps when the target manifold
$N$ has nonpositive sectional curvature (assuming $M$ and $N$ are compact), using a heat flow technique. 
The success of this technique relies on a Bochner formula for maps between Riemannian manifolds generalizing the classical Bochner--Weitzenböck formulas.
They additionally showed rigidity results under stronger curvature assumptions, 
such as nonnegative Ricci curvature for the domain manifold and negative sectional curvature for the target manifold. 

In the seminal paper \cite{MR584075}, Siu adapted Eells--Sampson's Bochner techniques to the setting where $M$ is a Kähler manifold. 
Siu's ideas were further developed by Sampson \cite{MR833809} and Carlson--Toledo \cite{MR1019964}, among others. 
Sampson showed that if $M$ is compact Kähler and $N$ has nonpositive Hermitian sectional curvature, any harmonic map $M \to N$ is \emph{pluriharmonic}, a stronger version of harmonicity. 
Moreover Siu's technique yields a constraint on the relation between the image of $\upd f$ in $\upT N$ and the curvature tensor of $N$, 
leading to strong rigidity results under appropriate curvature assumptions. In particular, there are remarkable consequences when $N = G/K$ is a symmetric space of noncompact type,
that relate to the famous Mostow rigidity theorem.

This theory is key for the nonabelian Hodge correspondence initiated by Hitchin \cite{MR887284} and Donaldson \cite{MR887285}
and generalized by Corlette \cite{MR965220} and Simpson \cite{MR944577, MR1159261, MR1179076, SimpsonModuli}, beautifully relating the $G$-character variety of a compact Kähler manifold $M$ and the moduli space of $G$-Higgs bundles over $M$.

The goal of this report is to give a clean presentation of these ideas and results, 
after developing the relevant mathematical background mostly consisting of standard differential geometry. I hope that it becomes a useful reference to non-experts in the theory of harmonic maps
who may wish to apply it to other areas, such as the study of representations of discrete groups in Lie groups. 
Of course, there are already good references on the subject, such as the excellent book \cite{MR1379330} which I used substantially in the preparation of these notes (especially \autoref{sec:SymmetricSpaces}). I shall list other main references for each topic throughout subsequent sections.

\phantomsection 
\subsection*{Acknowledgments}
\addcontentsline{toc}{section}{Acknowledgments}

This report originated in an expository talk that I gave at the workshop \emph{Harmonic maps and rigidity\footnote{\url{https://math.unice.fr/~jtoulisse/conf/sisteron.html}}}
in Sisteron, France in April 2019. 
I am very grateful to the organizers Alexis Gilles, Nicolas Tholozan, and Jérémy Toulisse for this wonderful experience.
I also thank all the workshop participants for stimulating mathematical discussions. 

I gratefully acknowledge research support from the NSF Grant DMS1107367 aka \emph{RNMS\@: GEometric structures And Representation varieties} (the \emph{GEAR Network}).

\newpage
\section{Harmonic maps} \label{sec:HarmonicMaps}

Let  $f \colon M \to N$ a smooth map between Riemannian manifolds.
The main goal of this section is to explain the equivalence:
\begin{equation} \label{eq:HarmonicEquiv}
 \text{$f$ harmonic} \quad \Leftrightarrow \quad \Delta f = 0 \quad \Leftrightarrow \quad \upd^*_\nabla \upd f = 0  \quad \Leftrightarrow \quad\text{$\upd f$ is a harmonic $1$-form}
\end{equation}

This requires introducing the basic tools and notions of differential geometry at play in
the theory of harmonic maps between Riemannian manifolds. There are quite a few good references on the subject, such as \cite{MR0164306}, \cite{MR703510}, 
\cite{MR1363513},  \cite{MR2483364},  \cite{MR1391729},  \cite{MR3098705}, \cite[Chap. 9]{MR3726907}. 
I recommend refering to these for more developments and examples. 

\begin{remark}
Contrary to many authors, we will avoid writing any formulas in local coordinates.
\end{remark}

\begin{remark}
 In all this report, $M$ is assumed connected and orientable, and given a fixed orientation. In particular, the volume density $v_M$ on $M$ can be identified
 to the volume form $\vol_M$. When needed, $M$ will also be assumed compact, \eg{} for the last equivalence in \eqref{eq:HarmonicEquiv}.
\end{remark}

\subsection{Preliminary: connections in vector bundles}

Let $E \to M$ be a smooth vector bundle. We denote by $\Gamma(E)$ the space of smooth sections\footnote{More generally, 
we use $\Gamma(\cdot)$ for the space of sections of any sheaf, but we will (barely) use this language only in \autoref{sec:NAH}.}. 
A \emph{(linear) connection} in $E$ is a $\R$-linear map 
\begin{equation}
\nabla \colon \Gamma(E) \to \Gamma(\upT^*M \otimes E)
\end{equation}
that satisfies the \emph{Leibniz rule} $\nabla(f s) = \upd f \otimes s + f \nabla s$. 
If $X \in \Gamma(\upT M)$ is any vector field,
we denote $\nabla_X s$ the contraction of $\nabla s$ with $X$. It is a basic fact that the value of $\nabla_X s$ at $p \in M$
only depends on $s$ and $X_p$. 

Given a smooth metric $g$ in $E$, \ie{} a smooth section of $(E^*)^{\otimes2}$ such that $g_p$ is an inner product in $E_p$ for all $p \in M$,
a connection $\nabla$ is \emph{compatible with $g$} (or \emph{preserves $g$}) if $\nabla g = 0$, where we abusively still denote by $\nabla$ the induced connection in $(E^*)^{\otimes2}$. Concretely, this means that $X \cdot g(s_1, s_2) = g(\nabla_X s_1, s_2) + g(s_1, \nabla_X s_2)$.

When $E = \upT M$, $\nabla$ is called \emph{torsion-free} if $\nabla_X Y - \nabla_Y X = [X,Y]$.
A smooth metric in $\upT M$ is known as a \emph{Riemannian metric} on $M$, and one can show that there exists a unique
torsion-free connection $\nabla$ compatible with $g$, called the \emph{Levi--Civita connection}. This result is known as the \emph{fundamental theorem of Riemannian geometry}
and can be shown via the \emph{Koszul formula}, see \cite{MR3887684} or \cite{MR2088027}.

\begin{remark}
 We will discuss curvature tensors in \autoref{subsec:CurvatureTensors}.
\end{remark}

\subsection{\texorpdfstring{$f$ is harmonic if and only if $\Delta f = 0$}{f is harmonic if and only if Delta f = 0}} \label{subsec:HessianAndLaplacian}

Let $M$ and $N$ be Riemannian manifolds and $f \colon M \to N$ a smooth map. We indicate by $\nabla^M$ and $\nabla^N$ are the Levi--Civita connections of $M$ and $N$ respectively. Consider the pullback vector bundle
\begin{equation}
 E \coloneqq f^*(\upT N) \to M
\end{equation}
whose fiber above $x \in M$ is $\upT_{f(x)} N$. $E$ admits a pullback connection $f^* \nabla^N$ uniquely determined by the relation
$(f^* \nabla^N)_X (f^* s) = f^* \big(\nabla_X^N s\big)$ for any section $s \in \Gamma(\upT N)$. Henceforth we denote it $\nabla \coloneqq f^* \nabla^N$.

Consider the vector bundle $\upT^* M \otimes E \to M$. It admits a tensor product connection $\bar{\nabla}$ induced from the dual connection of $\nabla^M$ in $\upT^* M$
and the connection $\nabla$ in $E$. Thus $\bar{\nabla}$ is a linear map:
\begin{equation}
 \bar{\nabla} \colon \Gamma(\upT^* M \otimes E) \to \Gamma(\upT^* M \otimes \upT^* M \otimes E)\,.
\end{equation}

Note that $\upd f$, which is a linear map $\upT M \to \upT N$, can be seen as a section of $\upT^* M \otimes E$.
It therefore makes sense to consider $\bar{\nabla} (\upd f)$, it is an element of $\Gamma(\upT^* M \otimes \upT^* M \otimes E)$. In other words it is a bilinear map
$\upT M \times \upT M \to E$.
By definition, this is the \emph{Hessian} of $f$. Abusing notations, we denote it $\nabla^2 f \coloneqq \bar{\nabla} (\upd f)$. 

\begin{proposition} \label{prop:HessianSymmetric}
 The Hessian $\nabla^2 f$ is symmetric as a bilinear map  $\upT M \times \upT M \to \upT N$.
\end{proposition}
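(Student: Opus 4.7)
The plan is to unravel the definition of $\nabla^2 f = \bar{\nabla}(\upd f)$ directly and read off the antisymmetric part. Since $\bar{\nabla}$ is the tensor product of the dual of $\nabla^M$ and $\nabla = f^* \nabla^N$, the Leibniz rule gives, for any vector fields $X, Y \in \Gamma(\upT M)$,
\begin{equation*}
(\nabla^2 f)(X,Y) = \bigl(\bar{\nabla}_X \upd f\bigr)(Y) = \nabla_X\bigl(\upd f(Y)\bigr) - \upd f\bigl(\nabla^M_X Y\bigr)\,.
\end{equation*}
Subtracting the analogous expression with $X$ and $Y$ swapped, and using torsion-freeness of $\nabla^M$ to rewrite $\nabla^M_X Y - \nabla^M_Y X = [X,Y]$, I would obtain
\begin{equation*}
(\nabla^2 f)(X,Y) - (\nabla^2 f)(Y,X) = \nabla_X\bigl(\upd f(Y)\bigr) - \nabla_Y\bigl(\upd f(X)\bigr) - \upd f\bigl([X,Y]\bigr)\,.
\end{equation*}

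Next I would show this expression, call it $T(X,Y) \in \Gamma(E)$, vanishes identically. First I would verify it is $C^\infty(M)$-bilinear in $(X,Y)$, which is a routine check combining the Leibniz rules for $\nabla$ and for the Lie bracket. Since $T$ is therefore a tensor, it suffices to evaluate it at an arbitrary point $p \in M$, choosing $X$ and $Y$ to be commuting coordinate vector fields near $p$ so that $[X,Y] = 0$. In a local frame $\{e_\alpha\}$ of $\upT N$ around $f(p)$, I would expand $\upd f(Y) = (\upd f \cdot Y)^\alpha \, (e_\alpha \circ f)$ and compute $\nabla_X (\upd f(Y))$ using the defining property $\nabla_X(f^* e_\alpha) = (\nabla^N_{\upd f(X)} e_\alpha) \circ f$ together with Leibniz. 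Swapping the roles of $X$ and $Y$ and subtracting, the second-derivative terms cancel by equality of mixed partials, and the remaining terms assemble into a pullback of the torsion tensor of $\nabla^N$ evaluated on $(\upd f(X), \upd f(Y))$, which is zero since $\nabla^N$ is torsion-free.

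The main obstacle is a conceptual one rather than a computational one: because $\upd f(X)$ is in general not a vector field on $N$, one cannot invoke torsion-freeness of $\nabla^N$ directly — the identity $\nabla^N_U V - \nabla^N_V U = [U,V]$ is not even defined for the sections $U = \upd f(X)$, $V = \upd f(Y)$ of $E$. The pullback connection $f^* \nabla^N$ is precisely the device that transports torsion-freeness through $f$, and the reduction to commuting coordinate vector fields together with a local frame on the target is what lets us exploit it cleanly without ever writing an ill-defined Lie bracket on $N$.
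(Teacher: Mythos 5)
Your proposal is correct, and for the crucial step it takes a genuinely different route from the paper. The first half is identical: both you and the paper expand $\bar{\nabla}_X(\upd f)$ via the Leibniz rule for the product connection and use torsion-freeness of $\nabla^M$ to reduce the antisymmetric part to $T(X,Y) = \nabla_X(\upd f(Y)) - \nabla_Y(\upd f(X)) - \upd f([X,Y])$. The paper then finishes by formally rewriting $\nabla_X(\upd f(Y))$ as $\nabla^N_{\upd f(X)}(\upd f(Y))$, invoking torsion-freeness of $\nabla^N$ to produce $[\upd f(X), \upd f(Y)]$, and concluding by naturality of the Lie bracket, i.e.\ $\upd f([X,Y]) = [\upd f(X), \upd f(Y)]$ for $f$-related fields. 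As you correctly point out, this is only formally sensible: $\upd f(X)$ is a section of $f^*\upT N$, not a vector field on $N$, so the bracket and the torsion identity on $N$ are not literally defined (one must either assume $f$ is a diffeomorphism onto its image, or work with $f$-related extensions, which do not always exist pointwise). Your alternative --- check that $T$ is $\cC^\infty(M)$-bilinear, hence tensorial, then evaluate at a point using commuting coordinate fields on $M$ and a frame on $N$ --- avoids this issue entirely and is the rigorous version of the same cancellation. One small refinement: for the termwise cancellation to come out as cleanly as you describe, take a \emph{coordinate} frame $\{e_\alpha\}$ on $N$ (not an arbitrary local frame); then $\upd f(\partial_i) = \partial_i f^\alpha\, (e_\alpha \circ f)$, the second-derivative terms cancel by equality of mixed partials, and the Christoffel terms cancel by the symmetry $\Gamma^\gamma_{\alpha\beta} = \Gamma^\gamma_{\beta\alpha}$, which is exactly torsion-freeness of $\nabla^N$ in coordinates. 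In a general frame the leftover first-derivative terms and the bracket terms $[e_\alpha, e_\beta]$ must cancel against each other, which is true but requires an extra line. What the paper's argument buys is brevity and coordinate-freeness at the cost of a formal gap; what yours buys is rigor at the cost of a local computation.
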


\begin{proof}
This is essentially due to the fact that $\nabla^M$ and $\nabla^N$ are torsion-free. Let $u, v$ be tangent vectors at some $x \in M$,
let us show that $\nabla^2 f (u,v) - \nabla^2 f(v,u) = 0$. By definition of the product connection $\bar{\nabla}$,
\begin{equation} \label{eq:HessianSymmetricProof1}
\nabla_u(\upd f(v)) = (\bar{\nabla}_u (\upd f))(v) + \upd f (\nabla^M_u v)\,.
\end{equation}
Note that for \eqref{eq:HessianSymmetricProof1} to make sense, we locally extend $v$ as a vector field
around $x$.
We rewrite \eqref{eq:HessianSymmetricProof1} as:
\begin{equation} 
\nabla^2 f(u,v) = \nabla^N_{\upd f(u)}(\upd f(v)) - \upd f(\nabla^M_u v)\,.
\end{equation}
Thus we can write
\begin{equation} \label{eq:HessianSymmetricProof2}
\nabla^2 f(u,v) - \nabla^2 f(v,u)= \left[\nabla^N_{\upd f(u)}(\upd f(v)) - \nabla^N_{\upd f(v)}(\upd f(u))\right]- \left[\upd f(\nabla^M_u v) - \upd f(\nabla^M_v u)\right]\,.
\end{equation}
Since $\nabla^M$ and $\nabla^N$ are both torsion-free, \eqref{eq:HessianSymmetricProof2} is rewritten
\begin{equation}
\nabla^2 f(u,v) - \nabla^2 f(v,u)= \left[\upd f(u), \upd f(v)\right]- \upd f([u, v])\,.
\end{equation}
This is zero by naturality of the Lie backet.
\end{proof}

By definition, the \emph{Laplacian} $\Delta f$  is the trace of the Hessian $\nabla^2 f$. Recall that in the presence of an inner product,
one can take the trace of any symmetric bilinear form: it is the trace of the associated self-adjoint endomorphism. 
Let us record these definitions:

\begin{definition} \label{def:HessianAndLaplacian}
 The \emph{Hessian} of $f$ is the symmetric bilinear map $\nabla^2 f \colon \upT M \times \upT M \to \upT N$
defined by $\nabla^2 f = \bar{\nabla}(\upd f)$. The \emph{Laplacian} $\Delta f$ is
the section of  $f^*(\upT N)$ defined by $\Delta f = \tr (\nabla^2 f)$.
\end{definition}

\begin{remark}
 The Hessian $\nabla^2 f$ generalizes both the Riemannian Hessian (for real-valued functions) and the second fundamental form (for isometric
 immersions). The Laplacian $\Delta f$ is called
 \emph{tension field} by most authors and denoted $\tau(f)$, probably by herd behavior after Eells--Sampson \cite{MR0164306}.
\end{remark}

It is perfectly fine to define a harmonic map by $\Delta f = 0$: the reader
may take this definition home and skip the remainder of this subsection. For completeness, we recall below why $\Delta f = 0$ is equivalent to $f$ being
a critical point of the energy functional. Essentially, this is because $\grad \mathbf{E}(f) = -\Delta f$ (\autoref{rem:GradientEnergy}). 

Let $f \colon M \to N$ be a smooth map. If $M$ is not compact, the energy $\mathbf{E}(f)$ can be infinite, so instead we define it on compact subsets $K \subseteq M$:
\begin{equation} \label{eq:EnergyFunctional}
 \mathbf{E}_K(f) \coloneqq \frac{1}{2} \int_K \Vert \upd f \Vert^2 \upd v_M\,.
\end{equation}
In \eqref{eq:EnergyFunctional}, $\Vert \upd f \Vert$ is the \emph{Hilbert--Schmidt norm} of $f$: Given any linear map $L \colon V \to W$ 
between Euclidean vector spaces $(V, g_V)$ and $(W, g_W)$, the Hilbert-Schmidt norm
is defined by $\Vert L \Vert^2 = \tr_{g_V}(L^* g_W)$. It is the norm relative to the tensor inner product $g_V^* \otimes g_W$. 

Now let $V$ be any infinitesimal variation of $f$, \ie{} $V \in \Gamma(f^* \upT N)$, and let $(f_t)$ be any $1$-parameter variation of $f$ with initial tangent vector $V$, \ie{}
$(f_t) \colon I \times M \to N$ is smooth where $I \subseteq \R$ is an interval containing $0$, $f_0 = f$, and $\ddt_{|t=0} f_t = V$. 
This variation is called supported in $K$ if $f_t = f$ outside of $K$.

\begin{theorem}[First variational formula for the energy] \label{prop:FirstVariationalFormula}
For any variation $(f_t)$ of $f$ supported in a compact set $K$ and with initial tangent vector $V$,
 \begin{equation} \label{eq:FirstVariationalFormula}
  \ddt_{|t=0} \mathbf{E}_K(f_t) = - \int_K \langle \Delta f, V \rangle_N \upd v_M\,.
 \end{equation}
\end{theorem}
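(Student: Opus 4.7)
The plan is to apply the standard recipe: differentiate under the integral sign, commute $t$-derivative with spatial derivative using symmetry of the Hessian, and then integrate by parts.

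Concretely, I would introduce the smooth map $F \colon I \times M \to N$ defined by $F(t,x) = f_t(x)$ and the pullback connection $\nabla^F \coloneqq F^*\nabla^N$ on $F^*\upT N \to I \times M$, still denoted $\nabla$ for brevity. Working pointwise at $x \in M$ in a local orthonormal frame $(e_i)$ of $\upT M$ parallel at $x$ (i.e. $\nabla^M_{e_i} e_j = 0$ at $x$), I write
\begin{equation}
\Vert \upd f_t \Vert^2 = \sum_i \langle \upd F(e_i), \upd F(e_i) \rangle_N\,,
\end{equation}
and differentiate in $t$, which is legitimate since $(f_t)$ is smooth and supported in the compact set $K$. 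This yields
\begin{equation}
\ddt_{|t=0} \Vert \upd f_t \Vert^2 = 2 \sum_i \langle \bar{\nabla}_{\partial_t}(\upd F)(e_i), \upd f(e_i) \rangle_N\,.
\end{equation}

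The key step is to apply \autoref{prop:HessianSymmetric} to the map $F$: its Hessian $\bar{\nabla}(\upd F)$ is symmetric, and since $[\partial_t, e_i] = 0$ on $I \times M$, we get
\begin{equation}
\bar{\nabla}_{\partial_t}(\upd F)(e_i) = \bar{\nabla}_{e_i}(\upd F)(\partial_t) = \nabla_{e_i}\!\bigl(\upd F(\partial_t)\bigr)\,,
\end{equation}
which at $t=0$ equals $\nabla_{e_i} V$. Thus
\begin{equation}
\ddt_{|t=0} \mathbf{E}_K(f_t) = \int_K \sum_i \langle \nabla_{e_i} V, \upd f(e_i) \rangle_N \, \upd v_M\,.
\end{equation}

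The final step is integration by parts. I would define the vector field $W$ on $M$ by $\langle W, X \rangle_M = \langle V, \upd f(X) \rangle_N$, so that $W$ is supported in $K$ since $V$ is. A direct computation at $x$ in the parallel frame $(e_i)$, using that $\nabla$ is compatible with $g_N$, gives
\begin{equation}
\Div(W) = \sum_i \langle \nabla_{e_i} V, \upd f(e_i) \rangle_N + \sum_i \langle V, \bar{\nabla}_{e_i}(\upd f)(e_i) \rangle_N = \sum_i \langle \nabla_{e_i} V, \upd f(e_i) \rangle_N + \langle V, \Delta f \rangle_N\,,
\end{equation}
by definition of $\Delta f = \tr(\nabla^2 f)$. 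Integrating over $K$ and applying the divergence theorem (the boundary term vanishes since $W$ vanishes near $\partial K$) yields the claimed identity. I expect the only mildly delicate point to be justifying the commutation of the Hessian argument for $F$, which however is immediate from \autoref{prop:HessianSymmetric} applied verbatim to $F$ in place of $f$; everything else is a routine bookkeeping of signs and supports.
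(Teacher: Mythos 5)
Your proof is correct and follows essentially the same route as the paper: differentiate under the integral, commute the $t$-derivative with the spatial derivative (your appeal to the symmetry of the Hessian of $F$ on $I \times M$ is the same torsion-freeness fact the paper invokes via the product connection), and integrate by parts. Your divergence-theorem step with the vector field $W$ is literally the paper's observation that the integrand is $-\langle \Delta f, V\rangle_N$ plus the coexact function $\upd^*\langle \upd f(\cdot), V\rangle$, since $\upd^* = -\operatorname{div}$ under metric duality.
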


\begin{proof}
The first variational formula for the energy is essentially an integration by parts. First write:
 \begin{equation}
   \ddt_{|t=0} \mathbf{E}_K(f_t) = \frac{1}{2} \int_K  \ddt_{|t=0} \tr \langle \upd f_t, \upd f_t \rangle_N \upd v_M
   = \int_K  \tr \langle ( \upd f, \nabla_{\frac{\partial}{\partial t}} \upd f_t)_{|t=0} \rangle_N \upd v_M 
\end{equation}
where $\nabla$ here denotes the product connection in $\R \times M$. For this connection, one has 
$\nabla_\frac{\partial}{\partial t} \nabla_u =  \nabla_u \nabla_\frac{\partial}{\partial t}$
for any $u \in \upT M$, from this observation one can derive that 
$ \left( \nabla_{\frac{\partial}{\partial t}} \upd f_t\right)_{|t=0} = \nabla V$.
We therefore get
 \begin{equation} \label{eq:proofFVF1}
   \ddt_{|t=0} \mathbf{E}_K(f_t)
   = \int_K  \tr \langle \upd f, \nabla V \rangle_N \upd v_M \,.
\end{equation}
By compatibility of $\nabla$ with the metric in $N$, the function $\tr \langle \upd f, \nabla V \rangle_N$ can be written as
$-\langle \tr  \nabla^2 f, V \rangle_N$ plus a coexact function (namely $\upd^* \langle \upd f (\cdot), V \rangle$), and is zero outside of $K$. Stokes's theorem thus yields:
 \begin{equation}
   \ddt_{|t=0} \mathbf{E}_K(f_t)
   = -\int_K  \langle \tr  \nabla^2 f,  V \rangle_N \upd v_M
   = -\int_K  \langle \Delta f, V \rangle_N \upd v_M \,.
\end{equation}
\end{proof}

\begin{remark} \label{rem:gradE}
The end of the proof can be rewritten more convincingly using the tools of \autoref{subsec:dfHarmonic}:
\begin{equation}
 \begin{split}
  \ddt_{|t=0} \mathbf{E}_K(f_t)
  & = \int_K  \tr \langle \upd f, \nabla V \rangle_N \upd v_M \\
  & = \langle \upd f, \nabla V \rangle_{\upL^2} = \langle \upd^*_\nabla \upd f, V \rangle_{\upL^2} = - \langle \Delta f, V \rangle_{\upL^2}\,.
 \end{split}
\end{equation}
\end{remark}

\begin{remark} \label{rem:GradientEnergy}
The first variational formula for the energy \eqref{eq:FirstVariationalFormula} says precisely that 
$\grad \mathbf{E}(f) = - \Delta f$
on $\cC^\infty(M,N)$, where the gradient is taken with respect to the Riemannian metric defined by 
$\langle U, V \rangle = \int_M \langle U, V \rangle_N \upd v_M$ for all $U, V \in \Gamma(f^* \upT N)$.
Even though $\cC^\infty(M,N)$ is infinite-dimensional, it can be equipped with a smooth structure making the previous statement precise
(see \cite[Chap. 42]{MR1471480}).
\end{remark}

One says that $f$ is a critical point of the energy functional if $\ddt_{|t=0} \mathbf{E}_K(f_t) = 0$ for any compact $K \subseteq M$ and for any variation $(f_t)$ of $f$ supported in $K$.
According to the previous remark, this can be simply put: $\grad \mathbf{E}(f) = 0$. In any case, the next corollary follows immediately from \autoref{prop:FirstVariationalFormula}:
\begin{corollary}
 Let $f \colon M \to N$ be a smooth map between Riemannian manifolds.
 \begin{equation}
  \text{$f$ is harmonic} \quad \stackrel{\text{def}}{\Leftrightarrow} \quad \text{$f$ is a critical point of the energy functional}  \quad \Leftrightarrow \quad \Delta f = 0\,.
 \end{equation}
\end{corollary}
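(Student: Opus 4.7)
The first equivalence is purely definitional: in the introduction, a harmonic map is defined as a critical point of the energy functional, so only the second equivalence ``$f$ is a critical point of $\mathbf{E}$ $\Leftrightarrow$ $\Delta f=0$'' requires proof. The plan is to extract this from the first variational formula \autoref{prop:FirstVariationalFormula}.

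The ``$\Leftarrow$'' direction is immediate: if $\Delta f = 0$, then the right-hand side of \eqref{eq:FirstVariationalFormula} vanishes identically, so $\ddt_{|t=0} \mathbf{E}_K(f_t) = 0$ for every compact $K$ and every variation supported in $K$. For the ``$\Rightarrow$'' direction, the strategy is the standard ``fundamental lemma of the calculus of variations'' argument. Suppose, for contradiction, that $\Delta f (x_0) \neq 0$ at some point $x_0 \in M$. I would then construct a section $V \in \Gamma(f^*\upT N)$ supported in a small compact neighborhood $K$ of $x_0$ such that $\langle \Delta f, V \rangle_N \geq 0$ everywhere and is strictly positive at $x_0$ — concretely, take $V = \varphi \cdot \Delta f$ where $\varphi$ is a smooth bump function equal to $1$ near $x_0$ and compactly supported in a neighborhood where $\Delta f$ remains nonzero. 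By \eqref{eq:FirstVariationalFormula}, the corresponding variation then satisfies $\ddt_{|t=0}\mathbf{E}_K(f_t) < 0$, contradicting criticality.

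The one technical point that must be addressed is that \autoref{prop:FirstVariationalFormula} requires the variation $V$ to be realized as $\ddt_{|t=0} f_t$ for an actual smooth $1$-parameter family $(f_t)$ supported in $K$. This is where I would use the exponential map of $N$: set
\begin{equation}
 f_t(x) \coloneqq \exp^N_{f(x)}\bigl(t\, V(x)\bigr)\,.
\end{equation}
For $t$ in a sufficiently small interval $I \ni 0$ this is well-defined and smooth, it satisfies $f_0 = f$ and $\ddt_{|t=0} f_t = V$, and since $V$ vanishes outside $K$ the variation is supported in $K$.

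No step is truly hard: the main (minor) obstacle is making sure the ``bump function times $\Delta f$'' test section is a genuine admissible variation, which is resolved by the exponential-map construction above. Altogether the corollary reduces to \autoref{prop:FirstVariationalFormula} plus a nondegeneracy argument, matching the conceptual picture of \autoref{rem:GradientEnergy} where $\Delta f = -\grad \mathbf{E}(f)$.
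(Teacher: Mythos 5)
Your proof is correct and follows the same route as the paper, which simply states that the corollary ``follows immediately'' from the first variational formula \autoref{prop:FirstVariationalFormula} and leaves the standard details implicit. You have merely spelled out those details (the bump-function test section and its realization as an actual variation via the exponential map of $N$), and both are carried out correctly.
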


\subsection{\texorpdfstring{$f$ is harmonic if and only if $\upd_\nabla^* \upd f = 0$}{f is harmonic if and only if d*df = 0}}
\label{subsec:dfHarmonic}

We have seen in \autoref{subsec:HessianAndLaplacian} that $\upd f$ is a section of $\upT^*M \otimes E$ where $E = f^* \upT N$. We then defined a connection $\bar{\nabla}$
in $\upT^*M \otimes E$ and defined the Hessian of $f$ as $\bar{\nabla}(\upd f)$. Alternatively, one could see $\upd f$ as a $1$-form with values
in $E$. More generally, we denote $\Omega^k(M, E)$ the space of smooth $k$-forms with values in $E$:
\begin{equation}
 \Omega^k(M, E) \coloneqq \Gamma(\Lambda^k \upT^* M \otimes E)\,.
\end{equation}
The connection $\nabla$ in $E$ extends uniquely to a linear map
\begin{equation}
 \upd_\nabla \colon \Omega^k(M, E) \to \Omega^{k+1}(M, E)
\end{equation}
called the \emph{exterior covariant derivative} such that $\upd_\nabla (\omega \otimes s) = \upd \omega \otimes s + (-1)^k \omega \wedge \nabla s$.

Note that $\upd_\nabla$ does not see the metric on $M$: it only depends on $\nabla = f^* \nabla^N$.
In particular, one can consider the $2$-form $d_\nabla(\upd f) \in \Omega^2(M, E)$, which only depends on the metric on $N$, but we shall soon see that $d_\nabla(\upd f) = 0$.

On the other hand, there is a tensor product connection $\bar{\nabla} \colon \Gamma(\upT^* M \otimes E) \to \Gamma(\upT^* M \otimes \upT^* M \otimes E)$
and more generally a tensor product connection  
\begin{equation}
 \bar{\nabla} \colon \Gamma((T^* M)^{\otimes k} \otimes E) \to \Gamma((T^* M)^{\otimes (k+1)} \otimes E)\,.
\end{equation}
Since $\Lambda^k \upT^* M$ is a subspace of $(T^* M)^{\otimes k}$ (namely the subspace of antisymmetric tensors), one can restrict $\bar{\nabla}$ to this subspace and get
a map $\bar{\nabla} \colon  \Omega^k(M, E) \to \Gamma((T^* M)^{\otimes (k+1)} \otimes E)$. 

\begin{proposition} \label{prop:Antisymmetrization}
 $\upd_\nabla \colon \Omega^k(M, E) \to \Omega^{k+1}(M, E)$ is the antisymmetrization of the restriction of $\bar{\nabla}$ to $\Omega^k(M, E)$.
 Concretely, given $\alpha \in \Omega^k(M, E)$:
 \begin{equation} \label{eq:antisymmetrization}
  \upd_\nabla \alpha (u_0, \dots, u_k) = \sum_{s=0}^k (-1)^s \, (\bar{\nabla}_{u_s} \alpha)(u_0, \dots, \widehat{u_s}, \dots, u_k)
 \end{equation}
 where the notation $\widehat{u_s}$ means that $u_s$ is omitted.  For example ($k=1$):
 \begin{equation} \label{eq:antisymmetrization1}
  (\upd_\nabla \alpha)(u,v) = (\bar{\nabla} \alpha)(u,v) - (\bar{\nabla} \alpha)(v,u)
 \end{equation}
\end{proposition}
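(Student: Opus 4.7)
The plan is to verify \eqref{eq:antisymmetrization} pointwise by reducing to decomposable forms. Both sides of \eqref{eq:antisymmetrization} are tensorial in $(u_0,\dots,u_k)$ and $\cC^\infty(M)$-linear in $\alpha$, so it suffices to check the identity locally on elementary sections of the form $\alpha = \omega \otimes s$, where $\omega \in \Omega^k(M)$ is a scalar $k$-form and $s \in \Gamma(E)$; indeed, any $E$-valued $k$-form can be written locally as a finite sum of such elementary sections, and both $\upd_\nabla$ and the antisymmetrized $\bar{\nabla}$ are $\R$-linear.

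For $\alpha = \omega \otimes s$, the defining Leibniz rule gives
\begin{equation}
\upd_\nabla(\omega \otimes s) = \upd \omega \otimes s + (-1)^k \omega \wedge \nabla s\,,
\end{equation}
while the product connection yields $\bar{\nabla}_X(\omega \otimes s) = (\nabla^M_X \omega) \otimes s + \omega \otimes \nabla_X s$, where $\nabla^M$ also denotes the induced Levi--Civita connection on $\upT^*M$. Evaluating the right-hand side of \eqref{eq:antisymmetrization} on $(u_0,\dots,u_k)$ and distributing then splits it into two sums: one involving $(\nabla^M_{u_s}\omega)(u_0,\dots,\widehat{u_s},\dots,u_k) \otimes s$ and one involving $\omega(u_0,\dots,\widehat{u_s},\dots,u_k) \otimes \nabla_{u_s} s$.

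The first sum is precisely $(\upd \omega)(u_0,\dots,u_k) \otimes s$ by the classical formula expressing the exterior derivative of a scalar form as the antisymmetrization of its covariant derivative, a formula that relies crucially on the torsion-freeness of $\nabla^M$. The second sum, after matching signs using the convention $(\omega \wedge \beta)(u_0,\dots,u_k) = \sum_s (-1)^{k-s} \omega(u_0,\dots,\widehat{u_s},\dots,u_k)\,\beta(u_s)$ for a $1$-form $\beta$, reproduces $(-1)^k(\omega \wedge \nabla s)(u_0,\dots,u_k)$. Adding the two pieces reconstructs $\upd_\nabla(\omega \otimes s)$ evaluated at $(u_0,\dots,u_k)$, proving the identity.

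The main obstacle is not the computation itself, which is a routine bookkeeping of wedge products and signs, but rather the invocation of the scalar case: one needs the fact that on ordinary differential forms the exterior derivative coincides with the antisymmetrization of the Levi--Civita covariant derivative. This follows from torsion-freeness and can be stated as a lemma to be assumed or quickly recalled; in particular, this is where the hypothesis on $\nabla^M$ enters (recall that $\upd_\nabla$ itself does not depend on the metric on $M$, but the decomposition via $\bar{\nabla}$ does). The special case $k=1$ in \eqref{eq:antisymmetrization1} is then a direct instance, and the recovery of \autoref{prop:HessianSymmetric} as $\upd_\nabla(\upd f) = 0$ follows since $\upd f$ is closed as an $E$-valued $1$-form by the computation at the end of that proof.
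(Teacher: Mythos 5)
Your proof is correct and is essentially the paper's own proof carried out in detail: the paper simply asserts that the antisymmetrization of $\bar{\nabla}$ verifies the defining Leibniz characterization of $\upd_\nabla$, ``which is readily checked,'' and your computation on decomposable sections $\omega \otimes s$ (splitting into the scalar exterior-derivative piece, which uses torsion-freeness of $\nabla^M$, and the $(-1)^k\,\omega \wedge \nabla s$ piece) is exactly that check. One minor quibble: neither side of \eqref{eq:antisymmetrization} is $\cC^\infty(M)$-linear in $\alpha$ (only their difference is, since both satisfy the same Leibniz rule), but the reduction you actually use --- locality of both operators, $\R$-linearity, and the local decomposition into elementary sections --- is sound.
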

\begin{proof}
 It suffices that the antisymmetrization of $\bar{\nabla}$ verifies the characterization of the exterior covariant derivative,
 which is readily checked.
\end{proof}

\begin{proposition} \label{prop:dfclosed}
 For any smooth $f \colon M \to N$, $\upd_\nabla(\upd f) = 0$.
\end{proposition}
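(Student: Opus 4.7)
The plan is to combine two results already established in the excerpt: the antisymmetrization formula of \autoref{prop:Antisymmetrization} (in the case $k=1$) and the symmetry of the Hessian (\autoref{prop:HessianSymmetric}). Since $\upd f$ is a $1$-form with values in $E = f^* \upT N$, and since by definition $\nabla^2 f = \bar{\nabla}(\upd f)$, these two facts should collapse the computation immediately.

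Concretely, first I would apply \eqref{eq:antisymmetrization1} to $\alpha = \upd f$, which gives
\begin{equation}
(\upd_\nabla \upd f)(u,v) = (\bar{\nabla}\, \upd f)(u,v) - (\bar{\nabla}\, \upd f)(v,u) = \nabla^2 f(u,v) - \nabla^2 f(v,u)
\end{equation}
for any tangent vectors $u,v$ at a point of $M$. Then I would invoke \autoref{prop:HessianSymmetric} to conclude that the right-hand side vanishes identically, hence $\upd_\nabla(\upd f) = 0$.

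There is no real obstacle here, because the nontrivial content has already been absorbed into \autoref{prop:HessianSymmetric}: the vanishing of $\upd_\nabla \upd f$ is, morally, just a restatement that the torsion of $\nabla^M$ and $\nabla^N$ are zero (together with the naturality of the Lie bracket under $f$). The only thing worth emphasizing in the write-up is that the identification of $\upd f$ both as a section of $\upT^* M \otimes E$ (used to define $\nabla^2 f$) and as an element of $\Omega^1(M,E)$ (used to apply $\upd_\nabla$) is the same object, so that $\bar{\nabla}(\upd f)$ really equals $\nabla^2 f$ when plugged into \eqref{eq:antisymmetrization1}.
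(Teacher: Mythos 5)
Your proposal is correct and is exactly the argument the paper gives: it applies \autoref{prop:Antisymmetrization} with $k=1$ to identify $\upd_\nabla(\upd f)$ with the antisymmetrization of $\bar{\nabla}(\upd f) = \nabla^2 f$, and then invokes the symmetry of the Hessian from \autoref{prop:HessianSymmetric}. No differences worth noting.
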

\begin{proof}
 By \autoref{prop:Antisymmetrization}, $\upd_\nabla(\upd f)$ is the antisymmetrization of $\bar{\nabla}(\upd f)$. But recall that the Hessian 
 $\nabla^2 f \coloneqq \bar{\nabla}(\upd f)$ is symmetric: \autoref{prop:HessianSymmetric}.
\end{proof}

\autoref{prop:dfclosed} says that $\upd f$ is always a closed $1$-form. Let us show that $f$ is harmonic if and only if $\upd f$ is co-closed. First we need to introduce the Hodge star and the codifferential.

\begin{definition} \label{def:battery}
Let $M$ be a Riemannian manifold and let $E \to M$ be a smooth vector bundle with a metric $\langle \cdot , \cdot \rangle_E$.
\begin{itemize}
 \item  The \emph{mixed product} of $E$-valued differential forms is the operation:
 \begin{equation}
  \begin{split}
   \Omega^k(M,E) \times \Omega^l(M,E) &\to \Omega^{k+l}(M, \R)\\
   (\alpha, \beta) & \mapsto \langle \alpha \wedge \beta \rangle
  \end{split}
 \end{equation}
defined by $\langle \omega_1 \otimes s_1 \wedge \omega_2 \otimes s_2 \rangle = \omega_1 \wedge \omega_2 \, \langle s_1, s_2 \rangle_E$.
\item The \emph{pointwise inner product} on $\Omega^k(M, E)$ is the operation: 
 \begin{equation}
  \begin{split}
   \Omega^k(M,E) \times \Omega^k(M,E) &\to \cC^{\infty}(M, \R)\\
   (\alpha, \beta) & \mapsto \langle \alpha , \beta \rangle
  \end{split}
 \end{equation}
defined by $\langle \omega_1 \otimes s_1 \wedge \omega_2 \otimes s_2 \rangle = \langle \omega_1 , \omega_2 \rangle_M \, \langle s_1, s_2 \rangle_E$, where
$\langle \cdot , \cdot \rangle_M$ is the inner product in $\Lambda^k \upT_x^* M$ induced from the inner product $\langle \cdot , \cdot \rangle_M$ in $\upT_x M$.
\item The \emph{inner product} in $\Omega^k(M, E)$ is the operation: 
 \begin{equation}
  \begin{split}
   \Omega^k(M,E) \times \Omega^k(M,E) &\to \R\\
   (\alpha, \beta) & \mapsto \langle \alpha , \beta \rangle_{\upL^2} \coloneqq \int_M \langle \alpha , \beta \rangle \upd v_M\,.
  \end{split}
 \end{equation}
 For this definition we assume that $M$ is compact or $\alpha$ and $\beta$ both have compact support.
 \item The \emph{Hodge star} in $\Omega^\bullet(M, E)$ is the operation: 
 \begin{equation}
  \begin{split}
   \Omega^k(M,E) & \to\Omega^{m-k}(M,E) \quad (m = \dim M)\\
   \beta & \mapsto \ast \beta
  \end{split}
 \end{equation}
characterized by the identity:
\begin{equation}
 \langle \alpha \wedge \ast \beta \rangle = \langle \alpha , \beta \rangle \vol_M\,.
\end{equation}
\end{itemize}
\end{definition}

The following proposition is elementary and its proof is left to the reader:
\begin{proposition}
 The Hodge star $\ast$ in $\Omega^\bullet(M, E)$ (cf \autoref{def:battery}) is well-defined. Moreover: 
 \begin{enumerate}[(i)]
  \item For any $\alpha = \omega \otimes s \in \Omega^k(M, E)$, $\ast \alpha = (\ast \omega) \otimes s$, where $\ast \omega$ is the standard Hodge star for real-valued differential forms (\ie{} take \autoref{def:battery} with $E = \R$).
  \item The Hodge star is a pointwise linear isometry: $\langle \ast \alpha , \ast \beta \rangle = \langle \alpha, \beta \rangle$.
  \item The Hodge star is an involution up to sign: for all $\alpha \in \Omega^k(M, R)$, $\ast \ast \alpha = (-1)^{k(m-k)} \alpha$.
 \end{enumerate}
\end{proposition}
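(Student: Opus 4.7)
My plan is to reduce everything to the classical scalar Hodge star on $\Omega^\bullet(M, \R)$ by working in a local orthonormal frame of $E$. First, for uniqueness of $\ast \beta$, I would observe that at each $x \in M$ the pointwise pairing
\[
(\Lambda^k \upT_x^* M \otimes E_x) \times (\Lambda^{m-k} \upT_x^* M \otimes E_x) \longrightarrow \Lambda^m \upT_x^* M, \qquad (\alpha, \gamma) \longmapsto \langle \alpha \wedge \gamma \rangle_x
\]
is non-degenerate, since it is the tensor product of the non-degenerate scalar pairing $\Lambda^k \times \Lambda^{m-k} \to \Lambda^m$ with the inner product on $E_x$. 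Thus the defining identity $\langle \alpha \wedge \ast \beta \rangle = \langle \alpha, \beta \rangle \vol_M$, imposed for all $\alpha \in \Omega^k(M,E)$, determines $\ast \beta$ uniquely at each point.

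For existence, I would pick a local orthonormal frame $(e_1, \dots, e_r)$ of $E$ and, writing $\beta = \sum_i \omega_i \otimes e_i$, define $\ast \beta := \sum_i (\ast \omega_i) \otimes e_i$. Verifying the characterization then reduces bilinearly to the case $\alpha = \eta \otimes e_j$, $\beta = \omega \otimes e_i$, where a direct computation yields
\[
\langle \alpha \wedge \ast \beta \rangle = \delta_{ij} \, \eta \wedge \ast \omega = \delta_{ij} \langle \eta, \omega \rangle_M \vol_M = \langle \alpha, \beta \rangle \vol_M,
\]
using the analogous identity for the scalar Hodge star. Uniqueness forces the various local formulas to patch globally, so $\ast$ is well-defined on $\Omega^\bullet(M, E)$.

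Property (i) then follows by picking a local orthonormal frame of $E$ whose first element is a unit multiple of $s$, so that $\alpha = \omega \otimes s = \|s\| \, \omega \otimes e_1$, and reading off the construction. For (ii) and (iii), I would expand $\alpha$ and $\beta$ in a local orthonormal frame, note that both sides of each identity split as orthogonal sums indexed by the frame, and invoke the scalar statements $\langle \ast \omega, \ast \eta \rangle_M = \langle \omega, \eta \rangle_M$ and $\ast \ast \omega = (-1)^{k(m-k)} \omega$ term by term.

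There is no genuine obstacle here: the whole content is that in an orthonormal frame the $E$-factor and the $\Lambda^\bullet \upT^* M$-factor decouple, so the $E$-valued Hodge star is essentially the scalar Hodge star tensored with $\mathrm{id}_E$. The only small point to keep in mind is to phrase uniqueness first, so that the frame-dependent construction can be promoted to a frame-independent operator without further bookkeeping.
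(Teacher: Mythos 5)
Your proof is correct; the paper explicitly leaves this proposition to the reader, and your argument --- uniqueness of $\ast\beta$ from pointwise non-degeneracy of the mixed-product pairing, then existence and properties (i)--(iii) by reduction to the scalar Hodge star in a local orthonormal frame of $E$ --- is exactly the intended elementary verification. The only cosmetic remark is that for (i) you need not choose a frame adapted to $s$ (which is awkward where $s$ vanishes): the identity $\langle \eta \otimes t \wedge (\ast\omega)\otimes s\rangle = \langle \eta, \omega\rangle_M \langle t, s\rangle_E \vol_M = \langle \eta\otimes t, \omega\otimes s\rangle \vol_M$ verifies the characterization of $\ast(\omega\otimes s)$ directly.
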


We are now ready to define the codifferential and Hodge Laplacian:
\begin{definition} \label{def:battery2}
Let $M$ be a Riemannian manifold, let $E \to M$ be a vector bundle with a metric $\langle \cdot , \cdot \rangle_E$
and with a connection $\nabla$ preserving the metric.
\begin{itemize}
 \item The \emph{codifferential} in $\Omega^\bullet(M,E)$ is the operation:
 \begin{equation}
  \begin{split}
   \upd^*_\nabla \colon \Omega^k(M, E) &\to \Omega^{k-1}(M,E)\\
   \alpha & \mapsto \upd^*_\nabla \alpha \coloneqq (-1)^{m(k-1)+1} \ast \upd_\nabla \ast \alpha\,.
  \end{split}
 \end{equation}
  \item The \emph{Hodge Laplacian} in $\Omega^k(M,E)$ is the operation:
 \begin{equation}
  \begin{split}
   \Delta \colon \Omega^k(M, E) &\to \Omega^{k}(M,E)\\
   \alpha & \mapsto \Delta \alpha \coloneqq \upd^*_\nabla \upd_\nabla  \alpha + \upd_\nabla \upd^*_\nabla  \alpha\,.
  \end{split}
 \end{equation}
A $k$-form $\alpha \in \Omega^k(M,E)$ is called \emph{harmonic} if $\Delta \alpha = 0$.
\end{itemize}
\end{definition}

The next proposition is both elementary and crucial:
\begin{proposition} \label{prop:Hodge}
 Let $M$ be a Riemannian manifold, let $E \to M$ be a vector bundle with a metric $\langle \cdot , \cdot \rangle_E$
and with a connection $\nabla$ preserving the metric.
\begin{enumerate}[(i)]
 \item \label{propHodgei} The codifferential $\upd^*_\nabla$ is the formal adjoint of the differential $\upd_\nabla$:
 \begin{equation}
  \langle \upd_\nabla \alpha, \beta \rangle_{\upL^2} = \langle \alpha, \upd^*_\nabla \beta \rangle_{\upL^2}
 \end{equation}
 whenever this is well-defined ($\deg \beta = \deg \alpha + 1$ and $\alpha$ or $\beta$ has compact support).
 \item \label{propHodgeii} A differential form $\alpha$ with compact support is harmonic if and only if it is closed and co-closed:
 \begin{equation}
  \Delta \alpha = 0 \quad \Leftrightarrow \quad \upd_\nabla \alpha = 0 ~~ \text{and} ~~ \upd^*_\nabla \alpha = 0\,.
 \end{equation}
\end{enumerate}
\end{proposition}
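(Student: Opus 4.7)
The plan is to handle \eqref{propHodgei} first (which is the substantive content) and then deduce \eqref{propHodgeii} as a standard polarization-style argument.

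For \eqref{propHodgei}, I would first establish a Leibniz identity for the mixed product under the exterior covariant derivative: for $\alpha \in \Omega^k(M,E)$ and $\gamma \in \Omega^l(M,E)$,
\begin{equation} \label{eq:LeibnizMix}
\upd \langle \alpha \wedge \gamma \rangle = \langle \upd_\nabla \alpha \wedge \gamma \rangle + (-1)^k \langle \alpha \wedge \upd_\nabla \gamma \rangle\,.
\end{equation}
This reduces, on decomposable sections $\omega_1 \otimes s_1$ and $\omega_2 \otimes s_2$, to the graded Leibniz rule for the ordinary exterior derivative on $\omega_1 \wedge \omega_2$ combined with the metric compatibility of $\nabla$, which gives $\upd \langle s_1, s_2\rangle_E = \langle \nabla s_1, s_2\rangle_E + \langle s_1, \nabla s_2\rangle_E$. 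This is precisely where the hypothesis that $\nabla$ preserves $\langle \cdot, \cdot\rangle_E$ is used.

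Next I would apply \eqref{eq:LeibnizMix} with $\gamma = \ast \beta$ for $\beta \in \Omega^{k+1}(M,E)$. The defining property of the Hodge star yields $\langle \upd_\nabla \alpha \wedge \ast \beta \rangle = \langle \upd_\nabla \alpha, \beta \rangle \, \vol_M$. The other term $(-1)^k \langle \alpha \wedge \upd_\nabla \ast \beta \rangle$ is manipulated by inserting $\ast\ast = (-1)^{(m-k)k}$ on $\Omega^{m-k}(M,E)$ in front of $\upd_\nabla \ast \beta$ and then unfolding the definition of $\upd^*_\nabla \beta = (-1)^{mk+1} \ast \upd_\nabla \ast \beta$. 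A quick sign count shows that the factor $(-1)^{m(k-1)+1}$ in \autoref{def:battery2} is chosen precisely so that this term becomes $\langle \alpha, \upd^*_\nabla \beta \rangle \vol_M$. Integrating over $M$ and applying Stokes's theorem, the left-hand side vanishes (since $\alpha$ or $\beta$ has compact support, and $M$ has no boundary), leaving $\langle \upd_\nabla \alpha, \beta\rangle_{\upL^2} = \langle \alpha, \upd^*_\nabla \beta\rangle_{\upL^2}$.

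For \eqref{propHodgeii}, the implication ``closed and co-closed $\Rightarrow$ harmonic'' is immediate from the definition $\Delta = \upd^*_\nabla \upd_\nabla + \upd_\nabla \upd^*_\nabla$. For the converse, I would pair $\Delta \alpha$ against $\alpha$ in the $\upL^2$ inner product and apply \eqref{propHodgei} twice:
\begin{equation}
\langle \Delta \alpha, \alpha \rangle_{\upL^2}
= \langle \upd^*_\nabla \upd_\nabla \alpha, \alpha \rangle_{\upL^2} + \langle \upd_\nabla \upd^*_\nabla \alpha, \alpha \rangle_{\upL^2}
= \Vert \upd_\nabla \alpha \Vert_{\upL^2}^2 + \Vert \upd^*_\nabla \alpha \Vert_{\upL^2}^2\,.
\end{equation}
If $\Delta \alpha = 0$, the left-hand side vanishes, and since both summands on the right are non-negative, each must be zero, giving $\upd_\nabla \alpha = 0$ and $\upd^*_\nabla \alpha = 0$.

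The only real obstacle is the sign bookkeeping in \eqref{propHodgei}: the definition of $\upd^*_\nabla$ bakes in a sign $(-1)^{m(k-1)+1}$ that exactly cancels the combined signs coming from the Leibniz rule $(-1)^k$ and from $\ast\ast = (-1)^{k(m-k)}$. Once this is verified carefully on decomposable forms in a single bookkeeping step, the rest of the argument is formal.
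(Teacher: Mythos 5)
Your proposal is correct and follows essentially the same route as the paper: the Leibniz rule for the mixed product (which is where metric compatibility of $\nabla$ enters), the identity $\langle \alpha, \upd^*_\nabla \beta\rangle \vol_M = (-1)^{k+1}\langle \alpha \wedge \upd_\nabla \ast\beta\rangle$ obtained from $\ast\ast = (-1)^{(m-k)k}$ on $\Omega^{m-k}$, and Stokes's theorem; part \ref{propHodgeii} is then the same $\upL^2$-pairing argument. The sign bookkeeping you flag does work out exactly as you describe, so there is nothing to add.
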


\begin{proof}
 For \ref{propHodgei} we write, given $\alpha \in \Omega^k(M, E)$ and $\beta \in \Omega^{k+1}(M, E)$, both with compact support:
 \begin{equation}
  \begin{split}
   \langle \alpha, \upd^*_\nabla \beta \rangle \vol_M &= \left\langle \alpha \wedge \ast(\upd^*_\nabla \beta) \right\rangle \\
   &= \left\langle \alpha \wedge \ast \left[(-1)^{mk+1} \ast \upd_\nabla \ast \beta\right] \right\rangle\\
   &= \left\langle \alpha \wedge (-1)^{mk+1} (-1)^{(m-k)k} \upd_\nabla \ast \beta \right\rangle\\
   &= (-1)^{k+1} \left\langle \alpha \wedge  \upd_\nabla \ast \beta\right\rangle \,.\\
  \end{split}
 \end{equation}
 Now write
 \begin{equation}
  \begin{split}
   \upd \langle \alpha \wedge \ast \beta \rangle &= \langle  \upd_\nabla \alpha \wedge \ast \beta \rangle + (-1)^k \langle \alpha \wedge \upd_\nabla \ast \beta \rangle\\
   &= \langle \upd_\nabla \alpha , \beta\rangle \vol_M -  \langle \alpha, \upd^*_\nabla \beta \rangle \vol_M
  \end{split}
 \end{equation}
and integrate over $M$ (use Stokes's theorem) to find \ref{propHodgei}. 
Now \ref{propHodgeii} follows easily:
 \begin{equation}
 \begin{split}
  \langle \Delta \alpha, \alpha \rangle_{\upL^2}
  &= \langle \upd^*_\nabla \upd_\nabla \alpha, \alpha \rangle_{\upL^2} + \langle  \upd_\nabla \upd^*_\nabla \alpha,  \alpha \rangle_{\upL^2}\\
  &= \langle \upd_\nabla \alpha, \upd_\nabla \alpha \rangle_{\upL^2} + \langle \upd^*_\nabla \alpha, \upd^*_\nabla \alpha \rangle_{\upL^2}\\
  \end{split}
 \end{equation}
 and observe that $\langle \upd_\nabla \alpha, \upd_\nabla \alpha \rangle \geqslant 0$ with equality if and only if $\upd_\nabla \alpha = 0$, same for
 $\langle \upd^*_\nabla \alpha, \upd^*_\nabla \alpha \rangle$.
\end{proof}

Out of interest, let us mention the main theorem of Hodge theory in the classical case $E = \R$:
\begin{theorem}
 Let $M$ be a closed Riemannian manifold. There is an orthogonal decomposition
 \begin{equation}
  \Omega^k(M, \R) 
  = \lefteqn{\underbrace{\phantom{\cH^k(M, \R)\oplus \operatorname{Im}(\upd)}}_{\ker (d)}}\overbrace{\cH^k(M, \R)}^{\ker \Delta}\oplus \overbrace{\operatorname{Im}(\upd)\oplus\operatorname{Im}(\upd^*)}^{\Imag \Delta}\,.
 \end{equation}
Moreover, the space of harmonic $k$-forms $\cH^k(M, \R) \coloneqq \ker \Delta$  is finite-dimensional.
\end{theorem}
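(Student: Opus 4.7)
The plan is to combine the formal Hodge identities already established in \autoref{prop:Hodge} with the analytic theory of formally self-adjoint elliptic operators on a closed Riemannian manifold. The strategy has four steps: orthogonality of the three summands (formal), ellipticity and self-adjointness of $\Delta$ (formal), the abstract spectral decomposition for elliptic operators (analytic), and identification of $\operatorname{Im}(\Delta)$ with $\operatorname{Im}(\upd) \oplus \operatorname{Im}(\upd^*)$ (formal).

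First, I would verify pairwise orthogonality of the three subspaces using the formal adjoint property \autoref{prop:Hodge}\ref{propHodgei}. For any $\alpha, \beta \in \Omega^\bullet(M, \R)$, one has $\langle \upd \alpha, \upd^* \beta \rangle_{\upL^2} = \langle \upd^2 \alpha, \beta \rangle_{\upL^2} = 0$, so $\operatorname{Im}(\upd) \perp \operatorname{Im}(\upd^*)$. For a harmonic form $\alpha$, \autoref{prop:Hodge}\ref{propHodgeii} gives $\upd \alpha = 0 = \upd^* \alpha$, whence $\cH^k(M, \R)$ is orthogonal to both $\operatorname{Im}(\upd)$ and $\operatorname{Im}(\upd^*)$. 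Next, I would observe that $\Delta$ is formally self-adjoint (two applications of \autoref{prop:Hodge}\ref{propHodgei}) and elliptic: a quick symbol calculation yields $\sigma_\Delta(\xi) = -\|\xi\|^2 \cdot \mathrm{id}$, invertible for $\xi \neq 0$.

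The core analytic step is to invoke the general theory of formally self-adjoint elliptic operators on a closed manifold. Through Sobolev spaces, Gårding's inequality, and the Rellich--Kondrachov compactness theorem, this theory produces: (a) finite-dimensionality of $\ker \Delta = \cH^k(M, \R)$; (b) closedness of $\operatorname{Im}(\Delta)$; (c) an $\upL^2$-orthogonal splitting $\Omega^k(M, \R) = \ker \Delta \oplus \operatorname{Im}(\Delta)$; and (d) a Green's operator $G \colon \Omega^k(M, \R) \to \Omega^k(M, \R)$ with $G|_{\ker \Delta} = 0$ and $\Delta G = G \Delta = \mathrm{id} - H$, where $H$ denotes the orthogonal projection onto $\ker \Delta$. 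This is the main obstacle: a rigorous treatment requires substantial analytic machinery (pseudodifferential operators, or direct Sobolev estimates) and is usually outsourced to a standard reference.

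Finally, I would identify $\operatorname{Im}(\Delta) = \operatorname{Im}(\upd) \oplus \operatorname{Im}(\upd^*)$. The inclusion $\subseteq$ is tautological from $\Delta \omega = \upd(\upd^* \omega) + \upd^*(\upd \omega)$. For $\supseteq$, given $\omega \in \Omega^k(M, \R)$, writing $\omega = H \omega + \Delta G \omega = H \omega + \upd(\upd^* G \omega) + \upd^*(\upd G \omega)$ both establishes the equality and exhibits the explicit Hodge decomposition of $\omega$. The bracket $\ker(\upd) = \cH^k(M, \R) \oplus \operatorname{Im}(\upd)$ indicated in the theorem follows as a bonus: if $\upd \omega = 0$ and $\omega = h + \upd \alpha + \upd^* \beta$, then $0 = \upd \upd^* \beta$ implies $\|\upd^* \beta\|^2_{\upL^2} = \langle \upd \upd^* \beta, \beta \rangle_{\upL^2} = 0$, so $\upd^* \beta = 0$.
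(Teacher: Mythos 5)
Your proposal is correct and follows essentially the same route as the paper: both outsource the analytic core to the general theory of self-adjoint elliptic operators on a closed manifold (the paper cites this as the "finiteness theorem") to obtain $\Omega^k = \ker\Delta \oplus \operatorname{Im}\Delta$ with $\ker\Delta$ finite-dimensional, and then use the formal adjointness identities to identify $\operatorname{Im}\Delta$ with $\operatorname{Im}(\upd)\oplus\operatorname{Im}(\upd^*)$ and to extract $\ker(\upd) = \cH^k \oplus \operatorname{Im}(\upd)$. Your use of the Green's operator to write the explicit decomposition $\omega = H\omega + \upd(\upd^*G\omega) + \upd^*(\upd G\omega)$ is a slightly more constructive phrasing of the paper's orthogonality argument, but it is not a different method.
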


\begin{proof}[Proof sketch]
The proof relies on the \emph{finiteness theorem}, which says that if $P$ is an elliptic operator in a vector bundle $E$, 
then $\ker P$ is finite-dimensional and there is an orthogonal decomposition $\Gamma(E) = \ker P \oplus \Imag P^*$ (see \cite[Thm.\ 3.10]{MR1924513}).
Since $\Delta$ is elliptic and self-adjoint, we obtain $\Omega^k(M, \R) = \ker \Delta \oplus \operatorname{Im} \Delta$ as expected. 
To show that $\Imag \Delta = \operatorname{Im}(\upd) \oplus \operatorname{Im}(\upd^*)$, one argues that $\operatorname{Im}(\upd)$ and $\operatorname{Im}(\upd^*)$ are orthogonal
(because $\upd \circ \upd = 0$); they are also orthogonal to $\ker \Delta$ therefore contained in $\Imag \Delta$, on the other hand it is trivial that 
$\Imag \Delta \subseteq \operatorname{Im}(\upd) + \operatorname{Im}(\upd^*)$.
Finally, $\cH^k(M, \R)\oplus \operatorname{Im}(\upd) = \ker (\upd)$ since the inclusion $\subseteq$ is clear on the one hand, and $\ker (\upd)$ is orthogonal to $\Imag(\upd^*)$ on the other.
\end{proof}

\begin{corollary}
The de Rham cohomology space $\operatorname{H}_{\text{dR}}^k(M, \R)$ is isomorphic to $\cH^k(M,\R)$.
\end{corollary}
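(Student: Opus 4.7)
The plan is to produce an explicit isomorphism $\Phi \colon \cH^k(M, \R) \to \operatorname{H}_{\text{dR}}^k(M, \R)$, with essentially all the work already done by the Hodge decomposition theorem. Since every harmonic form is in particular closed (apply \autoref{prop:Hodge}\ref{propHodgeii} with $E = \R$, which gives $\upd \alpha = 0$ whenever $\Delta \alpha = 0$), I would define $\Phi$ by sending a harmonic form $\alpha$ to its de Rham class $[\alpha]$. This map is clearly $\R$-linear and well-defined.

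For bijectivity, I would invoke the key relation $\cH^k(M, \R) \oplus \operatorname{Im}(\upd) = \ker (\upd)$ established in the preceding Hodge theorem. Surjectivity of $\Phi$ is then immediate: any class in $\operatorname{H}_{\text{dR}}^k(M, \R)$ is represented by some closed form $\omega \in \ker(\upd)$, which by the above decomposes as $\omega = \alpha + \upd \beta$ with $\alpha \in \cH^k(M, \R)$, so that $[\omega] = [\alpha] = \Phi(\alpha)$. Injectivity follows from the directness of the sum: if $\Phi(\alpha) = 0$, then $\alpha = \upd \gamma$ for some $\gamma \in \Omega^{k-1}(M, \R)$, so $\alpha \in \cH^k(M, \R) \cap \operatorname{Im}(\upd) = \{0\}$.

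The entire content of this corollary is thus packaged inside the Hodge decomposition theorem; once that result is in hand, there is no significant obstacle. If I had to flag a subtle point, it would simply be the observation that harmonic forms are closed (which ensures $\Phi$ is well-defined), together with the orthogonality of $\cH^k(M, \R)$ and $\operatorname{Im}(\upd)$ inside $\ker(\upd)$ — but both facts are explicit in the preceding statement.
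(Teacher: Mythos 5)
Your proposal is correct and is exactly the argument the paper intends: the corollary is stated without proof as an immediate consequence of the decomposition $\ker(\upd) = \cH^k(M,\R) \oplus \operatorname{Im}(\upd)$ from the preceding Hodge theorem, and your map $\alpha \mapsto [\alpha]$ with surjectivity and injectivity read off from that direct sum is the standard way to spell it out. No issues.
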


\begin{remark}
The Hodge decomposition and the isomorphism $H_{\text{dR}}^k \approx \cH^k$ generalize to forms with values in a vector bundle $E$ with a metric
and a \emph{flat} connection $\nabla$ preserving the metric: the proof is the same. Note that if $\nabla$ is not flat, the de Rham cohomology is not even well-defined\footnote{In general, 
$d_\nabla^2 =  \cdot  \wedge F^\nabla$ on $\Omega^k(M, E)$, where $F^\nabla$ is the curvature of $\nabla$ (see \autoref{subsec:CurvatureTensors}). In particular, $F^\nabla = 0$ 
if and only if the de Rham complex $\left(\Omega^{\bigcdot}(M,E), \upd_\nabla\right)$ is indeed a complex \ie{} $\upd_\nabla^2 = 0$,
which is necessary and sufficient to define its cohomology.}.
\end{remark}

Now let us come back to the setting where $E = f^* \upT N$ and $\nabla = f^* \nabla^N$. We saw that $\upd f \in \Omega^1(M, E)$ is always a closed $1$-form (\autoref{prop:dfclosed}).
Weighing in \autoref{prop:Hodge} \ref{propHodgeii}, we find:
\begin{proposition} If $M$ is compact,  $\upd f \in \Omega^1(M, E)$ is harmonic if and only if $\upd^*_\nabla \upd f = 0$.
\end{proposition}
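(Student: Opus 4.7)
The proof is essentially immediate from what has already been established, so my plan is short. The strategy is to combine \autoref{prop:dfclosed} (which gives $\upd_\nabla (\upd f) = 0$ unconditionally) with \autoref{prop:Hodge} \ref{propHodgeii} (which characterizes harmonicity as being closed and co-closed). Compactness of $M$ ensures $\upd f$ has compact support, so the hypothesis of \ref{propHodgeii} is satisfied.

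Concretely, I would argue as follows. The ``only if'' direction: if $\Delta(\upd f) = 0$, then \autoref{prop:Hodge} \ref{propHodgeii} immediately gives both $\upd_\nabla(\upd f) = 0$ and $\upd_\nabla^*(\upd f) = 0$, so in particular the codifferential vanishes. The ``if'' direction: assume $\upd_\nabla^*(\upd f) = 0$. By \autoref{prop:dfclosed} we also have $\upd_\nabla(\upd f) = 0$, so $\upd f$ is both closed and co-closed, hence harmonic by the same \autoref{prop:Hodge} \ref{propHodgeii}.

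There is no real obstacle here: every ingredient has been set up in the preceding subsection. The only subtlety worth flagging explicitly is the role of compactness of $M$, which guarantees that the $L^2$ pairings used in the proof of \autoref{prop:Hodge} \ref{propHodgeii} make sense when applied to $\upd f$ (equivalently, it justifies the integration by parts $\langle \Delta(\upd f), \upd f\rangle_{L^2} = \|\upd_\nabla(\upd f)\|_{L^2}^2 + \|\upd_\nabla^*(\upd f)\|_{L^2}^2$ that powers the ``only if'' direction). Since the statement already assumes $M$ compact, this is automatic. Alternatively, one can present the proof in a single line by writing $\Delta(\upd f) = \upd_\nabla^* \upd_\nabla(\upd f) + \upd_\nabla \upd_\nabla^*(\upd f) = \upd_\nabla \upd_\nabla^*(\upd f)$ (using \autoref{prop:dfclosed}) and then invoking \autoref{prop:Hodge} \ref{propHodgeii} to conclude that $\upd_\nabla \upd_\nabla^*(\upd f) = 0$ forces $\upd_\nabla^*(\upd f) = 0$.
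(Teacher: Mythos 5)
Your proof is correct and is exactly the paper's own argument: the proposition is stated there as an immediate consequence of \autoref{prop:dfclosed} (that $\upd f$ is always closed) combined with \autoref{prop:Hodge}~\ref{propHodgeii} (harmonic $\Leftrightarrow$ closed and co-closed for compactly supported forms). Your remark on the role of compactness is also the right one.
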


Note that $\upd^*_\nabla \upd f$ is an element of $\Omega^0(E)$, \ie{} a section of $f^*(\upT N)$, just like $\Delta f$.
\begin{proposition} \label{prop:deltanabladf}
For any smooth map $f \colon M \to N$, $  \upd^*_\nabla \upd f = -\Delta f$.
\end{proposition}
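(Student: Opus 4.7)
The plan is to establish the more general identity $\upd^*_\nabla \alpha = -\tr(\bar\nabla \alpha)$ for every $\alpha \in \Omega^1(M, E)$, where $\tr$ denotes the trace on the two $\upT^* M$ slots of $\bar\nabla \alpha \in \Gamma(\upT^* M \otimes \upT^* M \otimes E)$ with respect to the Riemannian metric on $M$. Applying this with $\alpha = \upd f$ immediately yields $\upd^*_\nabla \upd f = -\tr(\bar\nabla \upd f) = -\tr(\nabla^2 f) = -\Delta f$ in view of \autoref{def:HessianAndLaplacian}.

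Both sides of the general identity being pointwise first-order differential expressions in $\alpha$, it suffices to verify it at an arbitrary point $p \in M$. The key device is a local orthonormal frame $(e_i)_{1 \leqslant i \leqslant m}$ of $\upT M$ near $p$ that is $\nabla^M$-parallel at $p$, obtained by parallel-transporting an orthonormal basis of $\upT_p M$ along radial geodesics. With such a frame, $\nabla^M e_i|_p = 0$, and consequently $\nabla^M e^i|_p = 0$, $\upd e^i|_p = 0$, and $\upd(\ast e^i)|_p = 0$.

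Writing $\alpha = \sum_i e^i \otimes \alpha_i$ with $\alpha_i = \alpha(e_i) \in \Gamma(E)$, the parallel-frame property yields at $p$ the simplification $\bar\nabla_{e_j} \alpha = \sum_i e^i \otimes \nabla_{e_j} \alpha_i$, hence $\tr(\bar\nabla \alpha)|_p = \sum_i \nabla_{e_i} \alpha_i|_p$. For the codifferential, I expand $\ast \alpha = \sum_i \ast e^i \otimes \alpha_i$ and apply the Leibniz rule for $\upd_\nabla$; since $\upd(\ast e^i)|_p = 0$, only the $\nabla \alpha_i$ terms survive, so
\[
\upd_\nabla \ast \alpha|_p = (-1)^{m-1} \sum_{i,j} (\ast e^i \wedge e^j) \otimes \nabla_{e_j} \alpha_i|_p.
\]
Using the standard wedge identity $\ast e^i \wedge e^j = (-1)^{m-1} \delta_{ij} \vol_M$ together with $\ast \vol_M = 1$, this collapses to $\ast \upd_\nabla \ast \alpha|_p = \sum_i \nabla_{e_i} \alpha_i|_p = \tr(\bar\nabla \alpha)|_p$. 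Since the overall sign in $\upd^*_\nabla = (-1)^{m(k-1)+1} \ast \upd_\nabla \ast$ evaluates to $-1$ for $k = 1$, this produces $\upd^*_\nabla \alpha|_p = -\tr(\bar\nabla \alpha)|_p$, as desired.

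The main (mild) obstacle is the sign bookkeeping in the Hodge star manipulation; this is precisely neutralized by the parallel-frame trick, which annihilates all derivatives of $e^i$ and $\ast e^i$ at $p$ so that only the connection $\nabla$ on $E$ contributes. As an aside, the very same identity can also be extracted from \autoref{rem:gradE} via Proposition \ref{prop:Hodge} \ref{propHodgei} together with the first variational formula (\autoref{prop:FirstVariationalFormula}), by testing against compactly supported $V \in \Gamma(E)$ and localizing.
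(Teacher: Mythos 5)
Your proof is correct and follows essentially the same route as the paper: both reduce the statement to the general identity $\upd^*_\nabla \alpha = -\tr(\bar{\nabla}\alpha)$ and then invoke \autoref{def:HessianAndLaplacian}. The only difference is that you carry out explicitly the direct normal-coordinates computation (parallel frame at a point, with correct sign bookkeeping for $\ast e^i \wedge e^j$) that the paper merely mentions as an alternative and delegates to a reference, whereas the paper's primary suggestion is to verify the adjointness property.
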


\begin{proof}
 This is a special case of the formula  $\upd^*_\nabla \alpha = -\tr_{12} \bar{\nabla} \alpha$
for any $\alpha \in \Omega^k(M, E)$, 
 which is the $\upd^*_\nabla$-analogue of \eqref{eq:antisymmetrization}. More concretely:
  \begin{equation} \label{eq:ProofDeltaNablaDf2}
  \upd^*_\nabla \alpha(u_1, \dots, u_{k-1}) =  -\sum_{j=1}^m \bar{\nabla}_{e_j} \alpha(e_j, u_1, \dots, u_{k-1})
 \end{equation}
 where $(e_j)_{1 \leqslant j \leqslant m}$ is any local orthonormal frame field on $M$.
One can prove this formula by verifying that using it as a definition
for $\upd^*_\nabla \alpha$, it does give a formal adjoint of $\upd_\nabla$: check that $\langle \upd^*_\nabla \alpha, \beta \rangle$ is pointwise equal to 
 $\langle \alpha, \upd_\nabla \beta \rangle$ plus a globally defined co-exact function. Alternatively, a direct proof can be given using normal coordinates:
 see \cite[Lemma 1.20]{MR703510}.
\end{proof}

\begin{remark}
The formula $\upd^*_\nabla \alpha = -\tr_{12} \bar{\nabla} \alpha$ says that $\upd_\nabla^* = - \operatorname{div}$, where $\operatorname{div}$ is the divergence operator
suitably interpreted on $\Omega^k(M, E)$.
\end{remark}

\begin{remark}
\autoref{prop:deltanabladf} holds even if $M$ is not compact: the proof above shows that 
 $\langle \upd^*_\nabla \alpha +\tr_{12} \bar{\nabla} \alpha, \beta \rangle_{\upL^2} = 0$ for any \emph{compactly supported} $\beta$, but this is enough
 to conclude.
\end{remark}

\begin{remark}
 Warning! There is a sign discrepancy between the Laplacian of \autoref{def:HessianAndLaplacian} and the Hodge Laplacian:
 when $N = \R$, both operators make sense on $\Omega^0(M, \R)$, and differ by a minus sign.
 This is the well-known disagreement between the ``analyst's Laplacian'' and the ``geometer's Laplacian''.
\end{remark}

We can now wrap up:
\begin{theorem} \label{thm:CharacHarmonic}
 Let $f \colon M\to N$ be a smooth map between Riemannian manifolds.
 \begin{equation}
 \text{$f$ harmonic} \quad \Leftrightarrow \quad \Delta f = 0 \quad \Leftrightarrow \quad \upd^*_\nabla \upd f = 0  \quad \Leftrightarrow \quad\text{$\upd f$ is a harmonic $1$-form.}
\end{equation}
\end{theorem}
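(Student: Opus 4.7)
The theorem is essentially a bookkeeping exercise that threads together the four statements via results already established in this section, so my plan is to assemble them in sequence rather than prove anything new.

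First, the equivalence \emph{$f$ harmonic $\Leftrightarrow \Delta f = 0$} is just the corollary that follows the first variational formula (\autoref{prop:FirstVariationalFormula}): if $\Delta f = 0$ then \eqref{eq:FirstVariationalFormula} kills all variations, and conversely the integrand $\langle \Delta f, V\rangle_N$ vanishes for every compactly supported $V \in \Gamma(f^*\upT N)$, which by a standard bump-function argument forces $\Delta f \equiv 0$. Next, the equivalence \emph{$\Delta f = 0 \Leftrightarrow \upd^*_\nabla \upd f = 0$} is immediate from \autoref{prop:deltanabladf}, which identifies these two sections of $f^*\upT N$ up to sign.

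For the final equivalence \emph{$\upd^*_\nabla \upd f = 0 \Leftrightarrow \upd f$ is a harmonic $1$-form}, I would invoke the compactness of $M$ (flagged in the introductory remark of \autoref{sec:HarmonicMaps}) so that \autoref{prop:Hodge}\ref{propHodgeii} applies: a form is harmonic iff it is both closed and co-closed. The closedness $\upd_\nabla \upd f = 0$ is automatic by \autoref{prop:dfclosed}, so harmonicity of $\upd f$ reduces precisely to co-closedness $\upd^*_\nabla \upd f = 0$.

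The only mild subtlety — hardly an obstacle — is that \autoref{prop:Hodge}\ref{propHodgeii} requires either compactness of $M$ or compact support of $\alpha$, whereas the other equivalences hold on arbitrary Riemannian $M$ (\autoref{prop:deltanabladf} is valid without compactness by the remark following it). I would therefore explicitly note that the last equivalence of the chain uses the standing compactness assumption, while the first two do not, so that the reader understands exactly where the hypothesis enters. With these three steps chained, the full equivalence is established.
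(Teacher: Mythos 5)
Your proposal is correct and follows exactly the route the paper intends: the theorem is stated as a wrap-up of the preceding results, chaining the corollary to \autoref{prop:FirstVariationalFormula}, then \autoref{prop:deltanabladf}, then \autoref{prop:dfclosed} together with \autoref{prop:Hodge}~\ref{propHodgeii}. Your explicit flag that compactness of $M$ enters only in the last equivalence matches the paper's own remark following the theorem (which adds the counterexample $f(x_1,\dots,x_m)=x_1^2$ on $\R^m$).
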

\begin{remark}
  $M$ must be assumed compact for the last equivalence, to apply \autoref{prop:Hodge} \ref{propHodgeii}. Otherwise it is not always true that $\Delta f = 0$ if $\Delta \upd f = 0$: take
  $M = \R^m$, $N=\R$, and $f(x_1, \dots, x_m) = x_1^2$.
\end{remark}

\section{The Bochner technique}

This section is not essential for our exposition so the reader in a hurry may skip it. We explain the Bochner and Weitzenböck formulas
and how it is typically used to produce rigidity results. The theorem of Siu and Sampson (\autoref{sec:SiuSampsonTheorem}) 
is a variation of this technique when the domain manifold is Kähler.

\subsection{Bochner formula}

The classical Bochner formula for a smooth function $f \colon M \to \R$ is
\begin{equation} \label{eq:BochnerReal}
 \frac{1}{2} \Delta \Vert \nabla f \Vert^2 - \langle \nabla f, \nabla \Delta  f \rangle = \Vert \nabla^2 f \Vert^2 + \Ric^M(\nabla f, \nabla f)\,.
\end{equation}
In this formula, $\nabla f$ is an alias for either $\upd f$ or  $\grad f$ (one can harmlessly switch using metric duality).
Eells and Sampson \cite{MR0164306} generalized the formula for a smooth function $f \colon M \to N$:
\begin{equation} \label{eq:BochnerRiemannian1}
 \frac{1}{2} \Delta \Vert \nabla f \Vert^2 - \langle \nabla f, \nabla \Delta  f \rangle = \Vert \nabla^2 f \Vert^2 + \Ric^M(\langle \nabla f, \nabla f \rangle_N) - 
 R^N(\langle \nabla f \otimes \nabla f \rangle_M, \langle \nabla f \otimes \nabla f \rangle_M)\,.
\end{equation}

Let us clarify the notations:
\begin{itemize}
 \item $\langle \nabla f, \nabla  \Delta  f \rangle$ is the pointwise inner product in $\Omega^1(M, E)$.
 \item $\langle \nabla f, \nabla f \rangle_N$ is the section of $\upT M \otimes \upT M$ obtained from pairing $\nabla f$ to itself using $\langle \cdot, \cdot \rangle_N$.
 \item $\langle \nabla f \otimes \nabla f \rangle_M$ is the section of $\upT N \otimes \upT N$ obtained from tensoring $\nabla f$ with itself using $\langle \cdot, \cdot \rangle_M$.
 \item $R^N$ is the curvature operator in $N$: see \autoref{subsec:CurvatureTensors}.
\end{itemize}

\medskip \noindent
We shall prove the Bochner formula in \autoref{subsec:Weitzenbock} via the Weitzenböck formula.

\subsection{Application of the Bochner formula}

Before clarifying curvature tensors (\autoref{subsec:CurvatureTensors}) and proving the Weitzenböck
and Bochner formulas (\autoref{subsec:Weitzenbock}), let us explain the importance of the Bochner formula for the study of harmonic maps.

\subsubsection{Rigidity}

The key idea is to require the appropriate curvature assumptions so that one controls the sign of all terms on the right-hand side of the
Bochner formula \eqref{eq:BochnerRiemannian1}. Ideally: $M$ has nonnegative Ricci curvature and $N$ has nonpositive sectional curvature.
In that case, all these terms are pointwise nonnegative.
Assuming $M$ is compact and $f$ is harmonic, one then integrates \eqref{eq:BochnerRiemannian1} to obtain rigidity results.
Indeed, we have $\Delta f = 0$ when $f$ is harmonic, and $\Delta \Vert \nabla f \Vert^2$ always integrates to zero. Let us recall
why:
\begin{lemma} \label{lem:LaplacianIntegral}
 Let $M$ be a compact Riemannian manifold. For any $u \in \cC^\infty(M, \R)$, $\int_M \Delta u \upd v_M = 0$.
\end{lemma}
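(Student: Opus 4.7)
The plan is to reduce the claim to Stokes's theorem by expressing $\Delta u \, dv_M$ as an exact top-degree form on $M$, using the machinery assembled in the previous section.

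First I would invoke \autoref{prop:deltanabladf} applied with $N = \R$ (so that $E = f^*\upT N$ is the trivial line bundle, the pullback connection is the trivial connection, and $\upd_\nabla$ reduces to the ordinary exterior derivative $\upd$). This gives the identity
\begin{equation*}
\Delta u = -\upd^* \upd u
\end{equation*}
on $\Omega^0(M, \R) = \cC^\infty(M, \R)$.

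Next, unfolding the definition $\upd^* \alpha = (-1)^{m(k-1)+1} \ast \upd \ast \alpha$ from \autoref{def:battery2} at $k=1$ yields $\upd^* \upd u = -\ast \upd \ast \upd u$. Since the Hodge star sends functions on $M$ to top-degree forms via multiplication by $\vol_M$, one gets $(\ast \upd \ast \upd u) \, \vol_M = \upd \ast \upd u$. Putting the pieces together,
\begin{equation*}
\Delta u \, \vol_M = \upd \!\left( \ast \upd u \right).
\end{equation*}
Since $M$ is compact (and, by the standing assumption, oriented without boundary), Stokes's theorem immediately gives $\int_M \Delta u \, \upd v_M = 0$.

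As an alternative, one could argue entirely at the level of $\upL^2$-pairings: noting that the constant function $1$ is smooth on the compact manifold $M$, the adjointness property \autoref{prop:Hodge}\,\ref{propHodgei} yields
\begin{equation*}
\int_M \Delta u \, \upd v_M = -\langle \upd^* \upd u, 1 \rangle_{\upL^2} = -\langle \upd u, \upd 1 \rangle_{\upL^2} = 0,
\end{equation*}
since $\upd 1 = 0$. There is no real obstacle here; the only subtlety is tracking the sign conventions highlighted in the warning remark just before \autoref{thm:CharacHarmonic}, which is why I would prefer the first approach where the exactness of $\Delta u \, \vol_M$ is manifest.
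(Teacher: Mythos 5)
Your proof is correct and follows essentially the same route as the paper: the paper also reduces the claim to the observation that $\Delta u\,\vol_M$ is exact (equivalently, that $\Delta u = \mp\upd^*\upd u$ is co-exact) and concludes by Stokes, and it likewise mentions the divergence-theorem variant you give as an alternative. Your version is if anything slightly more careful, since you track the sign relating the tension-field Laplacian to $\upd^*\upd$ explicitly.
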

\begin{proof}
Recall that a differential form of top degree $\omega$ has zero integral if and only if it is exact.
Using the definition of the codifferential $\upd^*$ and recalling that $\ast 1 = \vol_M$, this amounts to saying
that $u \vol_M$ has zero integral if and only if $u$ is co-exact. But $\Delta u = \upd^* \upd u$ is obviously co-exact.
\end{proof}

\begin{remark}
Alternatively, $\int_M \Delta u \upd v_M = 0$ follows from the divergence theorem, recalling that
$\Delta u = \operatorname{div}(\grad u)$.
\end{remark}

Let us summarize the previous observations:
\begin{equation} \label{eq:BochnerIntegrate}
 \underbrace{\frac{1}{2} \Delta \Vert \nabla f \Vert^2}_{\text{integral $=0$}} - 
 \underbrace{\langle \nabla f, \nabla \Delta  f \rangle}_{\text{$0$ if $f$ harmonic}} = 
 \underbrace{\Vert \nabla^2 f \Vert^2}_{\text{$\geqslant 0$ pointwise}} +
 \underbrace{\Ric^M(\langle \nabla f, \nabla f \rangle_N)}_{\text{$\geqslant 0$ pointwise}} ~
 \underbrace{- R^N(\langle \nabla f \otimes \nabla f \rangle_M, \langle \nabla f \otimes \nabla f \rangle_M)}_{\text{$\geqslant 0$ pointwise}}
\end{equation}

Integrating over $M$, under the previous curvature assumptions, if $f$ is harmonic then 
each of the three terms on the right-hand side must be identically zero. In particular $\nabla^2 f = 0$ everywhere: $f$ is totally geodesic. Furthermore, the vanishing of the curvature terms easily yields:
\begin{theorem}[Eells--Sampson's strong rigidity theorem] \label{thm:EellsSampsonRigidity}
 Let $f \colon M \to N$ be a smooth harmonic map between Riemannian manifolds. Assume $M$ is compact and has nonnegative Ricci curvature
 and $N$ has nonpositive sectional curvature. Then $f$ is totally geodesic. Moreover:
 \begin{enumerate}[(i)]
  \item If $\Ric^M$ is not identically zero, then $f$ is constant.
  \item If $N$ has negative sectional curvature, then $f$ is constant or maps to a closed geodesic.
 \end{enumerate}
\end{theorem}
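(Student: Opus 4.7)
The plan is to integrate the Bochner formula \eqref{eq:BochnerRiemannian1} over the compact manifold $M$ and exploit the sign of each term. Harmonicity kills $\langle \nabla f, \nabla \Delta f\rangle$, and \autoref{lem:LaplacianIntegral} applied to $\tfrac12\Vert\nabla f\Vert^2$ kills the other left-hand term, leaving
\begin{equation}
\int_M \Bigl[\Vert\nabla^2 f\Vert^2 + \Ric^M\bigl(\langle \nabla f, \nabla f\rangle_N\bigr) - R^N\bigl(\langle \nabla f \otimes \nabla f\rangle_M, \langle \nabla f \otimes \nabla f\rangle_M\bigr)\Bigr]\upd v_M = 0.
\end{equation}
I would then unpack each summand in a local orthonormal frame $(e_i)$ of $\upT M$ diagonalizing $\Ric^M$ with eigenvalues $\lambda_i \geqslant 0$: the Ricci term becomes $\sum_i \lambda_i \Vert\upd f(e_i)\Vert_N^2 \geqslant 0$, while the curvature term unpacks as $\sum_{i,j}\langle R^N(\upd f(e_i), \upd f(e_j))\upd f(e_j), \upd f(e_i)\rangle_N$, a weighted sum of sectional curvatures of the planes $\mathrm{span}(\upd f(e_i), \upd f(e_j)) \subset \upT N$; under $\sec^N \leqslant 0$ this is $\leqslant 0$, so its negation in the integrand is $\geqslant 0$. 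Three pointwise nonnegative integrands whose integral is zero must each vanish identically. In particular $\nabla^2 f \equiv 0$ and $f$ is totally geodesic; consequently $\bar\nabla(\upd f) = 0$, the pullback tensor $f^*g^N$ is $\nabla^M$-parallel, and the energy density $\Vert\upd f\Vert^2$ is a constant function on the connected manifold $M$.

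For (i), fix $p \in M$ where $\Ric^M_p \neq 0$; since $\Ric^M_p \geqslant 0$, it has a positive eigenvalue in some direction $v$, and the pointwise identity $\sum_i \lambda_i\Vert\upd f(e_i)\Vert^2 = 0$ forces $\upd f_p(v)=0$. Under the natural strengthening that $\Ric^M_p$ be positive definite at some $p$, this gives $\upd f_p=0$, and constancy of $\Vert\upd f\Vert^2$ yields $\upd f\equiv 0$, i.e., $f$ is constant. For (ii), under $\sec^N < 0$ the vanishing of the curvature sum at each point forces $\upd f(e_i)$ and $\upd f(e_j)$ to be linearly dependent for every $i,j$, so $\mathrm{rank}(\upd f_p) \leqslant 1$ everywhere. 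If this rank is identically $0$, $f$ is constant; otherwise it is identically $1$, and the $\bar\nabla$-parallelism of the rank-one line field $\Imag(\upd f)\subset f^*\upT N$ forces $f$ to factor through a fixed geodesic of $N$. Since $M$ is compact and $\upd f$ never vanishes, $f(M)$ is a compact connected subset of that geodesic without endpoints, hence a closed geodesic.

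The main obstacle is the argument for (i) as literally stated: the pointwise identity only forces $\upd f_p$ to kill the positive eigenspace of $\Ric^M_p$, which is strictly weaker than $\upd f_p = 0$ unless $\Ric^M_p$ is positive definite. More precisely, the parallel distribution $\ker(\upd f)$ always contains $\Imag(\Ric^M)$ pointwise, so ``$\Ric^M\not\equiv 0$'' gives $\mathrm{rank}(\ker\upd f)\geqslant 1$, but globalizing this to $\ker\upd f = \upT M$ genuinely requires the positive-definiteness strengthening; I would interpret the hypothesis of (i) in that sense.
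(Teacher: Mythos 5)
Your proof follows the paper's route exactly: integrate the Bochner formula \eqref{eq:BochnerRiemannian1} over $M$, kill the left-hand side using harmonicity and \autoref{lem:LaplacianIntegral}, and conclude that the three pointwise nonnegative integrands vanish identically. The paper itself only asserts that the vanishing of the curvature terms ``easily yields'' (i) and (ii), so your frame-by-frame unpacking is the intended argument carried out in more detail, and your treatment of (ii) --- rank $\leqslant 1$ everywhere from negative sectional curvature, then either $\upd f \equiv 0$ or $\upd f$ nowhere zero by constancy of $\Vert \upd f\Vert^2$ (which follows from $\bar\nabla(\upd f)=0$), and in the latter case the image is a compact connected totally geodesic $1$-dimensional submanifold, hence a closed geodesic --- is the standard one.

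Your reservation about (i) is well founded, and it is a defect of the statement rather than of your proof. The vanishing of $\Ric^M(\langle \nabla f, \nabla f\rangle_N) = \sum_i \lambda_i \Vert\upd f(e_i)\Vert^2$ only forces $\upd f$ to annihilate the positive eigenspace of $\Ric^M$, and this cannot be improved: the projection $f \colon S^2 \times T^2 \to T^2$ (round times flat metric on the source, flat target) is a nonconstant totally geodesic harmonic map from a compact manifold with $\Ric^M \geqslant 0$ and $\Ric^M \not\equiv 0$ to a target of nonpositive sectional curvature, so (i) as literally written is false. The correct hypothesis, as in Eells--Sampson's original theorem, is that $\Ric^M$ be positive definite at some point $p$; with that reading your argument closes, since $\upd f_p = 0$ together with the constancy of the energy density gives $\upd f \equiv 0$. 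You should state (i) with that strengthened hypothesis.
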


\subsubsection{Heat flow}
\label{subsubsec:HeatFlow}

The Bochner formula is also key for the heat flow technique developed by Eells--Sampson 
\cite{MR0164306} and successfully to adapted to various settings by many authors. Let us quickly explain this, although
we will not directly use the heat flow in this report. 

Assume $(f_t)_{t \in I}$ is a $1$-parameter family of maps $M \to N$
satisfying the heat flow equation:
\begin{equation} \label{eq:HeatFlow}
 \partial_t f_t = \Delta f_t \,.
\end{equation}
One can show local existence of this flow given $f_0$ using classical nonlinear parabolic PDE techniques (linearization
of the operator $\partial_t - \Delta$ and implicit function theorem, see \eg{} \cite{MR756629}).
Note that the heat flow is just the gradient flow for the energy functional, since $\grad \mathbf{E}(f) = -\Delta(f)$ (\autoref{rem:GradientEnergy}).

Assume $N$ is nonpositively curved. 
The second variational formula for the energy shows that it is a convex functional (see \eg{} \cite[Prop. 3.4]{Gaster-Loustau-Monsaingeon1}),
making it reasonable to expect that the heat flow might converge to an energy minimizer. 
When $M$ and $N$ are both compact, one can try proving convergence of the flow (maybe up to subsequence)
using some compactness argument such as
the Arzelà–Ascoli theorem. However there are significant obstacles to overcome, such as proving the long-time
existence of the heat flow and equicontinuity of $(f_t)$. The Bochner formula shows that $\Vert \nabla f_t (x) \Vert$ is uniformly bounded
in time and space, solving both these obstacles.

Let us give a some details. Denote $e(f_t) \coloneqq \frac{1}{2} \Vert \upd f_t \Vert^2$ the energy density of $f_t$. If $(f_t)_{t \in I}$ satisfies \eqref{eq:HeatFlow}, then
$\partial_t e(f_t) = \langle \nabla f_t, \nabla  \Delta  f_t \rangle$. 
Assuming $\Ric^M$ is bounded below by $K \in \R$ (\eg{} $M$ is compact) and $N$ has nonpositive sectional curvature, the Bochner formula 
\eqref{eq:BochnerRiemannian1} yields:
\begin{equation}
 (\partial_t - \Delta) e(f_t) \leqslant K' e(f_t)
\end{equation}
where $K' = -2K$. By a generalized mean value property (more precisely: Moser's Harnack inequality for subsolutions of the heat equation
\cite{MR0159139}\cite[Lemma 5.3.4]{MR2431658}), this implies
\begin{equation} \label{eq:EnergyDensityIneq}
 \Vert e(f_t) \Vert_{\infty} \leqslant C \mathbf{E}(f_0)
\end{equation}
for some constant $C>0$. In other words, the family $(f_t)$ has a uniformly bounded gradient.
It obviously implies that it is equicontinuous, but also long-time existence of the heat flow 
(when $N$ is compact) by a standard ``blow up in finite time'' argument for nonlinear parabolic PDEs.
Thus one can extract $t_k \to +\infty$ such that $f_{t_k}$ converges uniformly to some map $f_\infty$. There still remains work to do, involving
regularity theory and Sobolev spaces, to show that $f_\infty$ is a smooth harmonic map, see \eg{} \cite{MR756629} for details.
Let us record the following conclusion:
\begin{theorem}[Eells--Sampson's existence theorem] \label{thm:EellsSampson}
 Let $f \colon M \to N$ be smooth, where $M$ and $N$ are compact Riemannian manifolds and $N$ has nonpositive sectional curvature.
 Then $f$ is homotopic to a smooth harmonic map.
\end{theorem}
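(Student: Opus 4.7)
The plan is to follow the heat flow strategy outlined in \autoref{subsubsec:HeatFlow}: start with $f$ as initial data, run the gradient flow for the energy functional, and extract a harmonic map as a limit along a subsequence of times going to infinity. Homotopy to $f$ will come for free from the continuity of the flow in $t$.

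First I would establish short-time existence of a smooth solution $(f_t)_{t \in [0, T)}$ to the heat equation $\partial_t f_t = \Delta f_t$ with $f_0 = f$. This is a standard application of nonlinear parabolic PDE theory: linearize $\partial_t - \Delta$, note that the linearization is a genuinely parabolic elliptic-in-space operator on sections of $f_t^* \upT N$, and invoke the inverse function theorem in suitable Hölder or Sobolev spaces, as in \cite{MR756629}. Compactness of $M$ and $N$ makes the functional setup straightforward.

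Next comes the crux of the argument, which is upgrading short-time existence to long-time existence $T = +\infty$. The key is the \emph{a priori} estimate \eqref{eq:EnergyDensityIneq} on the energy density: since $M$ is compact its Ricci curvature is bounded below by some $K \in \R$, and $N$ has nonpositive sectional curvature, so the Bochner formula \eqref{eq:BochnerRiemannian1} combined with $\partial_t e(f_t) = \langle \nabla f_t, \nabla \Delta f_t \rangle$ along the flow yields the differential inequality $(\partial_t - \Delta) e(f_t) \leqslant -2K \, e(f_t)$. Moser's Harnack inequality for subsolutions of the heat equation then gives $\Vert e(f_t) \Vert_\infty \leqslant C \, \mathbf{E}(f_0)$ uniformly in $t$. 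A standard blow-up argument for nonlinear parabolic PDEs shows that smooth solutions cannot cease to exist while their first derivatives remain uniformly bounded (this is where compactness of $N$ is essential to prevent the image from escaping to infinity), so the flow exists for all $t \geqslant 0$.

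Now I would extract the limit. The uniform bound $\Vert \upd f_t \Vert \leqslant C'$ implies that $(f_t)_{t \geqslant 0}$ is equicontinuous. Together with precompactness of the target $N$, the Arzelà--Ascoli theorem yields a sequence $t_k \to +\infty$ along which $f_{t_k}$ converges uniformly to some continuous map $f_\infty \colon M \to N$. Moreover, since $\mathbf{E}(f_t)$ is nonincreasing (the flow is the gradient flow of $-\mathbf{E}$, see \autoref{rem:GradientEnergy}) and bounded below by $0$, the integral $\int_0^\infty \Vert \Delta f_t \Vert_{\upL^2}^2 \, \upd t < \infty$, so along a further subsequence $\Delta f_{t_k} \to 0$ in $\upL^2$. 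The hard part will be upgrading this weak convergence to conclude that $f_\infty$ is smooth and satisfies $\Delta f_\infty = 0$ classically; this step genuinely requires elliptic regularity (Schauder estimates or the Sobolev theory developed for harmonic maps in \cite{MR756629}), exploiting again the uniform energy density bound to stay in the small-energy regime where regularity is automatic. Finally, the homotopy between $f$ and $f_\infty$ is the map $(s, x) \mapsto f_{s/(1-s)}(x)$ extended by $f_\infty$ at $s = 1$, which is continuous by uniform convergence. I expect the regularity of the limit to be the main technical obstacle, whereas the curvature hypothesis on $N$ enters only through the Bochner estimate and is essentially what makes the whole scheme work.
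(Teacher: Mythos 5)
Your proposal is correct and follows essentially the same route as the paper's sketch in \autoref{subsubsec:HeatFlow}: short-time existence by parabolic theory, the Bochner inequality $(\partial_t - \Delta)e(f_t) \leqslant -2K\,e(f_t)$ plus Moser's Harnack inequality to get the uniform gradient bound, long-time existence by the blow-up criterion, Arzelà--Ascoli and energy monotonicity to extract a subsequential limit, and elliptic/parabolic regularity to show the limit is a smooth harmonic map. The only slip is the final homotopy: uniform convergence holds only along the subsequence $t_k$, so $(s,x)\mapsto f_{s/(1-s)}(x)$ need not extend continuously at $s=1$; instead note that each $f_{t_k}$ is homotopic to $f$ via the flow and, for $k$ large, uniformly close to $f_\infty$, hence homotopic to it by a geodesic interpolation in $N$.
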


\subsection{Curvature tensors} \label{subsec:CurvatureTensors}

Now is a good time to clarify our notations and conventions for curvature tensors.

Given a vector bundle $E \to M$ with a connection $\nabla$, it is easy to check that the linear map $\upd_\nabla^2 \colon \Omega^0(M, E) \to \Omega^2(M, E)$ 
is tensorial, \ie{} $\cC^\infty(M, \R)$-linear. Therefore there exists a tensor field $F^\nabla \in \Omega^2(\End E)$ such that $\upd_\nabla^2 (s) = \, F^\nabla s$. 
The operator $F^\nabla$ is called the \emph{curvature}
of $\nabla$. Recalling that $\upd_\nabla$ is the antisymmetrization of $\nabla$ as in \eqref{eq:antisymmetrization1}, $F^\nabla$ is concretely given  by 
\begin{equation} \label{eq:CurvatureConnection}
 F^\nabla(X,Y)s = \nabla_{X,Y}^2 s - \nabla_{Y,X}^2 s = \nabla_X \nabla_Y s - \nabla_Y \nabla_X s - \nabla_{[X,Y]} s
\end{equation}
where $\nabla$ is assumed torsion-free for the second identity. More elegantly: $F^\nabla(X,Y) = [\nabla_X,  \nabla_Y] - \nabla_{[X,Y]}$.

\subsubsection{Riemannian curvature tensors} \label{subsubsec:RiemannianCurvatureTensors}

 If $M$ is a Riemannian manifold,  the \emph{Riemann curvature tensor} 
 $R \coloneqq F^\nabla \in \Gamma(\Lambda^2 \upT^* M \otimes \End(\upT M))$ is the curvature of the Levi-Civita connection $\nabla$:
\begin{equation}
R(X,Y)Z = \nabla_X \nabla_Y Z - \nabla_Y \nabla_X Z - \nabla_{[X,Y]} Z\,.
\end{equation}
The \emph{purely covariant version} of the Riemann curvature tensor is the $4$-covariant tensor field:
\begin{equation} \label{eq:PurelyCovariantCurvatureTensor}
 R(X, Y, Z, W) = -\langle R(X,Y)Z, W \rangle
\end{equation}
and the \emph{curvature operator} is the symmetric bilinear form on $\otimes^2 \upT M$ defined by:
\begin{equation} \label{eq:CurvatoreOperator}
 Q(X \otimes Y, Z \otimes W) = R(X,Y, Z, W)
\end{equation}
for decomposable tensors, and extended bilinearly to $\medotimes^2 \upT M$ (it can also be defined as a bilinear form on $\Lambda^2 \upT M$
or as an endomorphism of $\medotimes^2 \upT M$ or $\Lambda^2 \upT M$). The \emph{sectional curvature} $K$ is
\begin{equation}
 K(X,Y) = \frac{Q(X \otimes Y, X\otimes Y)}{\Vert X \wedge Y \Vert^2}
\end{equation}
with $\Vert X \wedge Y \Vert^2 = \Vert X \Vert^2 \Vert Y \Vert^2 - \langle X, Y \rangle^2$. 
It is defined for any two linearly independent vectors $X$ and $Y$ with same basepoint and only depends on the plane spanned by $X$ and $Y$.
Finally, the \emph{Ricci curvature tensor} is the symmetric bilinear form on $\upT M$ defined by
\begin{equation}
 \Ric(X,Y) = \tr R(X, \cdot, Y, \cdot) = \sum_{j=1}^n R(X, e_j, Y, e_j)
\end{equation}
for any local orthonormal frame field $(e_j)_{1 \leqslant j \leqslant n}$.

\begin{remark}
 It is certainly questionable to define the purely covariant Riemann tensor with a minus sign as in \eqref{eq:PurelyCovariantCurvatureTensor}
 and still use the same letter $R$! The main reason for this choice is nicer formulas for sectional curvatures.
 Some authors introduce the minus sign earlier (\eg{} $R = -d_\nabla^2$), others later (\eg{} $K(X,Y) = -\frac{R(X, Y, X, Y)}{\Vert X \wedge Y \Vert^2}$),
 but no solution is entirely satisfactory. The only unanimous convention is sectional curvature: it should agree with the Gaussian curvature when $\dim M = 2$.
\end{remark}

\subsubsection{Curvature tensors in $\Omega^k(M,E)$} \label{subsubsec:CurvatureTensorsOmega}

In order to express the Weitzenböck formula, we generalize the Riemann and Ricci curvature to bundles of vector-valued forms.
Let $M$ be a Riemannian manifold, let $E \to M$ be a vector bundle with a metric $\langle \cdot , \cdot \rangle_E$
and a connection $\nabla$ preserving the metric. Recall that there is a product connection $\bar{\nabla}$ in the bundle $\Lambda^k \upT^* M \otimes E$.
By definition, the curvature of that bundle is the curvature of $\bar{\nabla}$.  Explicitly:
\begin{equation}
 (R(X,Y)\alpha)(u_1, \dots, u_k) = F^\nabla(X,Y)(\alpha(u_1, \dots, u_k)) - \sum_{j=1}^k \alpha(u_1, \dots, R^M(X,Y)u_j, \dots , u_k)\,.
\end{equation}
We then define the \emph{Ricci (or Bochner, or Weitzenböck) operator} $S \colon \Omega^k(M, E) \to \Omega^k(M,E)$:
\begin{equation}
  S(\alpha)(u_1, \dots, u_k) = \sum_{s=1}^k (-1)^{s-1} \tr \left[(R(\, \cdot\, , u_s) \alpha)(\, \cdot \,, u_1, \dots, \widehat{u_s}, \dots, u_k)\right]
\end{equation}
Note that when $E = \R$ is the trivial flat bundle and $k = 1$, $S(\alpha) = \Ric^M (\alpha)$, interpreting $\Ric^M$ as a self-adjoint endomorphism
of $\Omega^1(M, \R)$ by metric duality.

\subsection{Weitzenböck and Bochner formulas} \label{subsec:Weitzenbock}

We conclude with the neat proof of the Bochner formula via the Weitzenböck formula.

Let $M$ be a Riemannian manifold, let $E \to M$ be a vector bundle with a metric $\langle \cdot , \cdot \rangle_E$ and with a connection $\nabla$ preserving the metric. In $\Omega^k(M,E)$ we have two Laplacian operators:
\begin{itemize}
 \item The Hodge Laplacian $\Delta$ introduced in \autoref{def:battery2}.
 \item The \emph{connection Laplacian} (or \emph{trace Laplacian} or \emph{rough Laplacian}) 
 $\tr \left(\bar{\nabla}^2\right)$, which can be seens as the natural extension of the Laplacian on functions (see \autoref{def:HessianAndLaplacian}).
\end{itemize}

\begin{remark}
The connection Laplacian $\tr(\nabla^2)$ is well-defined in $\Gamma(E)$ for any vector bundle $E$ with a metric and a compatible connection $\nabla$.
Some call \emph{Bochner Laplacian} the operator $\nabla^* \nabla$ where $\nabla^*$ is the formal adjoint of $\nabla$. 
Since $\nabla^* = -\tr \nabla$, the Bochner Laplacian is simply minus the connection Laplacian. 
\end{remark}

The Weitzenböck formula relates both Laplacian operators:

\begin{theorem}[Weitzenböck formula]
 For all $\alpha \in \Omega^k(M,E)$,
 \begin{equation} \label{eq:WeitzenbockFormula}
  \Delta \alpha = -\tr (\bar{\nabla}^2 \alpha) + S(\alpha)\,.
 \end{equation}
\end{theorem}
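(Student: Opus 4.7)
The plan is to compute $\upd_\nabla^* \upd_\nabla \alpha$ and $\upd_\nabla \upd_\nabla^* \alpha$ separately at an arbitrary point $p \in M$, express each in terms of the second covariant derivative $\bar{\nabla}^2 \alpha$, and add them. To keep the bookkeeping tractable, I would work in a \emph{synchronous} local orthonormal frame $(e_1, \dots, e_m)$ obtained by parallel transport of an orthonormal basis of $\upT_p M$ along radial geodesics from $p$. Such a frame satisfies $\nabla^M_X e_j|_p = 0$ for all $X \in \upT_p M$, hence $[e_i, e_j]|_p = 0$, and every iterated covariant derivative $\bar{\nabla}^2_{X, Y}\alpha$ reduces at $p$ to the directional derivative of a directional derivative, with no extra connection-symbol terms to track.

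First I would use the antisymmetrization formula of \autoref{prop:Antisymmetrization} together with its dual counterpart $\upd^*_\nabla\beta(u_1,\dots,u_{k-1}) = -\sum_j (\bar{\nabla}_{e_j}\beta)(e_j, u_1, \dots, u_{k-1})$ stated in the proof of \autoref{prop:deltanabladf}. A direct expansion gives, at $p$:
\[
 \upd^*_\nabla \upd_\nabla \alpha (u_1, \dots, u_k) = -\tr(\bar{\nabla}^2\alpha)(u_1,\dots,u_k) - \sum_{j,s}(-1)^s (\bar{\nabla}^2_{e_j, u_s}\alpha)(e_j, u_1, \dots, \widehat{u_s}, \dots, u_k)
\]
and
\[
 \upd_\nabla \upd^*_\nabla \alpha (u_1, \dots, u_k) = \sum_{j,s}(-1)^s (\bar{\nabla}^2_{u_s, e_j}\alpha)(e_j, u_1, \dots, \widehat{u_s}, \dots, u_k).
\]
The "diagonal" term $-\tr(\bar{\nabla}^2\alpha)$ arises from the piece $(\bar{\nabla}_{e_j}\alpha)(u_1, \dots, u_k)$ inside $\upd_\nabla\alpha(e_j, u_1, \dots, u_k)$; all other terms are "off-diagonal" and have the same shape in both expansions, differing only by the order of $\bar{\nabla}_{e_j}$ and $\bar{\nabla}_{u_s}$.

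Summing, the off-diagonal contributions combine into the commutators $(\bar{\nabla}^2_{u_s, e_j} - \bar{\nabla}^2_{e_j, u_s})\alpha$. Since $[e_i, e_j]|_p = 0$ in a synchronous frame, these commutators coincide at $p$ with the curvature $F^{\bar{\nabla}}(u_s, e_j)\alpha = -R(e_j, u_s)\alpha$ of $\bar{\nabla}$ on $\Lambda^k \upT^* M \otimes E$, using the notation of \autoref{subsubsec:CurvatureTensorsOmega}. A direct inspection then shows that the resulting sum $-\sum_{j,s}(-1)^s (R(e_j, u_s)\alpha)(e_j, u_1, \dots, \widehat{u_s}, \dots, u_k)$ matches the definition of $S(\alpha)$ term by term. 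Since both sides of the claimed identity are tensorial and $p$ is arbitrary, the formula follows.

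The main obstacle is the sign bookkeeping in the second step: one must correctly track the alternating signs $(-1)^s$ from the antisymmetrization, the index shift caused by omitting $\widehat{u_s}$, and the order of the covariant derivatives before grouping them into commutators. The synchronous-frame choice is what makes this manageable — in a generic frame one would additionally have to manipulate the terms $\nabla^M_X e_j$ and $[e_i, e_j]$ appearing in $F^{\bar{\nabla}}$ and verify that they cancel, which considerably inflates the algebraic overhead.
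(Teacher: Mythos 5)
Your computation is correct, but note that the paper does not actually prove this theorem: for $k \geqslant 1$ it only remarks that the formula is ``proven by direct computation'' and defers to the references \cite{MR1391729} and \cite{MR703510}. Your proposal carries out exactly that direct computation, and the details are consistent with the paper's conventions. Using the antisymmetrization formula \eqref{eq:antisymmetrization} for $\upd_\nabla$ and the trace formula \eqref{eq:ProofDeltaNablaDf2} for $\upd_\nabla^*$, the diagonal terms of $\upd^*_\nabla \upd_\nabla \alpha$ indeed produce $-\tr(\bar{\nabla}^2\alpha)$, and the off-diagonal terms of the two expansions combine into $\sum_{j,s}(-1)^s\bigl[(\bar{\nabla}^2_{u_s,e_j}-\bar{\nabla}^2_{e_j,u_s})\alpha\bigr](e_j, u_1,\dots,\widehat{u_s},\dots,u_k)$; by \eqref{eq:CurvatureConnection} the bracket equals $-R(e_j,u_s)\alpha$ in the notation of \autoref{subsubsec:CurvatureTensorsOmega}, and after absorbing the extra minus sign into $(-1)^{s-1}$ this is precisely the paper's definition of $S(\alpha)$. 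Two small points deserve to be made explicit if you write this up in full: first, to pass from $\bar{\nabla}_{e_j}\bigl[(\upd_\nabla\alpha)(e_j,u_1,\dots,u_k)\bigr]$ to $(\bar{\nabla}_{e_j}\upd_\nabla\alpha)(e_j,u_1,\dots,u_k)$ you need $\nabla^M_{e_j}e_j$ and $\nabla^M_{e_j}u_i$ to vanish at $p$, which is exactly what the synchronous extensions of both the frame and the arguments give you; second, the identification of the commutator of second covariant derivatives with the curvature is tensorial (it is the paper's equation \eqref{eq:CurvatureConnection} for the product connection $\bar{\nabla}$), so no frame-dependent terms survive and the final identity holds at the arbitrary point $p$ without further argument.
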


\begin{remark}
For $k=0$, the Weitzenböck formula reduces to \autoref{prop:deltanabladf}, since $S = 0$. Be wary that the usual Laplacian (\ie{} connection Laplacian)
and the Hodge Laplacian differ by a minus sign!
\end{remark}

For $k \geqslant 1$, the Weitzenböck formula is proven by direct computation. It is written \eg{} in \cite[Prop\ 1.3.4]{MR1391729};
also refer to \cite[(1.29)]{MR703510} for this and more historical references. For readers curious about the full extent
of Weitzenböck formulas, we refer to \cite{NicolaescuMO} as a starting point.
\begin{corollary}[Bochner formula in $\Omega^k(M,E)$]
 For all $\alpha \in \Omega^k(M,E)$,
 \begin{equation} \label{eq:BochnerOmegak}
  \frac{1}{2} \Delta \Vert \alpha \Vert^2 + \langle \Delta \alpha, \alpha \rangle = \Vert \nabla \alpha \Vert^2 + \langle S(\alpha), \alpha \rangle
 \end{equation}
\end{corollary}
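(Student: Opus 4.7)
The plan is to contract the Weitzenböck formula \eqref{eq:WeitzenbockFormula} against $\alpha$ pointwise and then recognize the resulting connection-Laplacian term as a Leibniz expression for the Laplacian of $\Vert \alpha \Vert^2$. First I would take the pointwise inner product with $\alpha$ on both sides of Weitzenböck, obtaining
\begin{equation*}
\langle \Delta \alpha, \alpha \rangle = -\langle \tr (\bar{\nabla}^2 \alpha), \alpha \rangle + \langle S(\alpha), \alpha \rangle.
\end{equation*}
So the whole corollary reduces to establishing the scalar identity
\begin{equation*}
\frac{1}{2} \Delta \Vert \alpha \Vert^2 = \langle \tr (\bar{\nabla}^2 \alpha), \alpha \rangle + \Vert \nabla \alpha \Vert^2,
\end{equation*}
where $\Delta$ here is interpreted on the scalar function $\Vert \alpha \Vert^2$ as the connection Laplacian $\tr(\nabla^2)$, which on functions is (up to the sign discussed in the warning after \autoref{prop:deltanabladf}) the usual Laplacian.

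This scalar identity is proven pointwise via a normal frame computation: fix $p \in M$ and pick a local orthonormal frame $(e_j)_{1 \le j \le m}$ of $\upT M$ satisfying $\nabla^M_{e_j} e_k(p) = 0$. By metric compatibility of $\bar{\nabla}$,
\begin{equation*}
e_j \cdot \Vert \alpha \Vert^2 = 2 \langle \bar{\nabla}_{e_j} \alpha, \alpha \rangle, \qquad
e_j \cdot (e_j \cdot \Vert \alpha \Vert^2) = 2 \langle \bar{\nabla}_{e_j} \bar{\nabla}_{e_j} \alpha, \alpha \rangle + 2 \Vert \bar{\nabla}_{e_j} \alpha \Vert^2,
\end{equation*}
and at $p$ one has $\bar{\nabla}_{e_j} \bar{\nabla}_{e_j} \alpha = \bar{\nabla}^2 \alpha(e_j, e_j)$ since the Christoffel terms vanish. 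Summing over $j$ yields the identity. Substituting back into the contracted Weitzenböck formula gives \eqref{eq:BochnerOmegak} immediately.

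The only nontrivial step is the Leibniz identity itself, but it is entirely elementary once one has chosen a normal frame; there is no additional curvature input needed beyond what is already packaged into $S(\alpha)$ by Weitzenböck. The real subtlety to watch out for is the sign convention: as the paper warns, the scalar Laplacian appearing in $\Delta \Vert \alpha \Vert^2$ must be read with the ``analyst's'' sign $+\tr(\nabla^2)$ for the formula as stated to hold, whereas the Hodge Laplacian in $\langle \Delta \alpha, \alpha \rangle$ carries the opposite sign. Checking the special case $k = 0$ with $E = \mathbb{R}$ recovers the classical scalar Bochner formula \eqref{eq:BochnerReal} (noting $S = \Ric^M$ in that case) and serves as a useful sanity check that the signs in the plan above have been reconciled correctly.
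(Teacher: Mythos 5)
Your proposal is correct and follows essentially the same route as the paper: both reduce the corollary to the Leibniz identity $\tfrac{1}{2}\tr\nabla^{2}\Vert\alpha\Vert^{2}=\langle\tr(\bar{\nabla}^{2}\alpha),\alpha\rangle+\Vert\nabla\alpha\Vert^{2}$ (the paper writes it abstractly via traces, you via a normal frame, which is the same computation) and then substitute the Weitzenböck formula. Your remarks on the sign convention for $\Delta\Vert\alpha\Vert^{2}$ and the $k=0$ sanity check are consistent with the paper's conventions.
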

\begin{proof}
 \begin{equation}
  \begin{split}
   \frac{1}{2} \Delta \Vert \alpha \Vert^2 &= \frac{1}{2} \tr \nabla^2 \langle \alpha, \alpha \rangle = \tr \nabla \langle \nabla \alpha, \alpha \rangle\\
   &= \tr \langle \nabla^2 \alpha, \alpha \rangle + \tr \langle \nabla \alpha, \nabla \alpha \rangle\\
  \end{split}
 \end{equation}
 For the first term, use the Weitzenböck formula: $\tr (\bar{\nabla}^2 \alpha) = S(\alpha) - \Delta \alpha$.
 For the second term, note that $\tr \langle \nabla \alpha, \nabla \alpha \rangle$ is just $\langle \nabla \alpha, \nabla \alpha \rangle$ 
 by definition of the pointwise inner product in $\Omega^{k+1}(M, E)$. We thus get
$\frac{1}{2} \Delta \Vert \alpha \Vert^2 = \langle S(\alpha) - \Delta \alpha, \alpha \rangle + \Vert \nabla \alpha \Vert^2$.
\end{proof}

\begin{corollary}[Bochner formula for maps between Riemannian manifolds]
Let $f \colon M \to N$ be a smooth map between Riemannian manifolds.
 \begin{equation} \label{eq:BochnerRiemannian2}
 \frac{1}{2} \Delta \Vert \nabla f \Vert^2 - \langle \nabla f, \nabla  \Delta f \rangle = \Vert \nabla^2 f \Vert^2 + \Ric^M(\langle \nabla f, \nabla f \rangle_N) - 
 R^N(\langle \nabla f \otimes \nabla f \rangle_M, \langle \nabla f \otimes \nabla f \rangle_M)\,.
\end{equation}
\begin{proof}
 The Bochner formula \eqref{eq:BochnerRiemannian2} is just a specialization of \eqref{eq:BochnerOmegak} when $E = f^* \upT N$, $k = 1$ and $\alpha = \upd f$.
 Indeed, $\langle S(\alpha), \alpha \rangle$ gives the two curvature terms of \eqref{eq:BochnerRiemannian2}; it
 remains to argue that $\Delta (\upd f) = - \nabla (\Delta f)$. Since $\upd_\nabla \upd f = 0$ (\autoref{prop:dfclosed}), 
 $\Delta (\upd f) = \upd_\nabla \upd_\nabla^* \upd f$. Conclude recalling that $\upd_\nabla^* \upd f = - \Delta f$ (\autoref{prop:deltanabladf}).
\end{proof}
\end{corollary}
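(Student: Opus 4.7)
The plan is to specialize the general Bochner formula in $\Omega^k(M,E)$ just established in \eqref{eq:BochnerOmegak} to the case $k=1$, $E = f^*\upT N$ with pullback connection $\nabla = f^*\nabla^N$, and $\alpha = \upd f$. Under this specialization, the term $\Vert \nabla \alpha \Vert^2$ is literally $\Vert \bar{\nabla}(\upd f) \Vert^2 = \Vert \nabla^2 f \Vert^2$ by the very definition of the Hessian (\autoref{def:HessianAndLaplacian}), so the $\Vert \nabla^2 f \Vert^2$ term of \eqref{eq:BochnerRiemannian2} is immediate. What remains is to identify $\langle \Delta(\upd f), \upd f \rangle$ with $\langle \nabla f, \nabla \Delta f \rangle$ and $\langle S(\upd f), \upd f \rangle$ with the two curvature terms on the right-hand side of \eqref{eq:BochnerRiemannian2}.

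For the Hodge Laplacian term, I would combine \autoref{prop:dfclosed} with \autoref{prop:deltanabladf}. Since $\upd_\nabla(\upd f) = 0$, one has $\Delta(\upd f) = \upd_\nabla \upd_\nabla^* \upd f = -\upd_\nabla(\Delta f) = -\nabla(\Delta f)$. Pairing with $\upd f$ pointwise and transporting this into \eqref{eq:BochnerOmegak} accounts for the $-\langle \nabla f, \nabla \Delta f \rangle$ term on the left of \eqref{eq:BochnerRiemannian2}.

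The main work lies in unpacking $\langle S(\upd f), \upd f \rangle$. I would compute in a local orthonormal frame $(e_i)$ of $\upT M$. For $k=1$ the definition of the Weitzenböck operator from \autoref{subsubsec:CurvatureTensorsOmega} reduces to $S(\upd f)(u) = \sum_j \bigl(R(e_j, u)\upd f\bigr)(e_j)$, and the action of the product curvature $R = F^{\bar{\nabla}}$ on $\Omega^1(M,E)$ splits as
\[
 (R(X,Y)\upd f)(Z) \;=\; F^\nabla(X,Y)\,\upd f(Z) \;-\; \upd f\bigl(R^M(X,Y)Z\bigr).
\]
Since $\nabla = f^*\nabla^N$, one has $F^\nabla(X,Y) = R^N(\upd f(X), \upd f(Y))$, so that the first piece is a quartic expression in $\upd f$ contracted against the $N$-curvature (feeding the $R^N(\langle \nabla f \otimes \nabla f \rangle_M, \langle \nabla f \otimes \nabla f \rangle_M)$ term), while tracing the second piece over $j$ collapses $R^M(e_j, \cdot)e_j$ into a Ricci contraction, feeding the $\Ric^M(\langle \nabla f, \nabla f \rangle_N)$ term.

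The hard part will be the sign bookkeeping: the paper adopts the convention $R(X,Y,Z,W) = -\langle R(X,Y)Z, W \rangle$ for the purely covariant Riemann tensor while reusing the same letter $R$, and the two pieces of $R(X,Y)$ acting on $\alpha \in \Omega^1(M,E)$ already enter with opposite signs. I would therefore carefully verify that after pairing $\langle S(\upd f), \upd f \rangle$ and substituting into \eqref{eq:BochnerOmegak}, the $\Ric^M$ contribution appears with a plus sign and the $R^N$ contribution with a minus sign, matching \eqref{eq:BochnerRiemannian2} exactly.
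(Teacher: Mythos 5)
Your proposal follows the same route as the paper: specialize the Bochner formula \eqref{eq:BochnerOmegak} to $k=1$, $E = f^*\upT N$, $\alpha = \upd f$, and use \autoref{prop:dfclosed} together with \autoref{prop:deltanabladf} to identify $\Delta(\upd f) = -\nabla(\Delta f)$. The only difference is that you spell out the identification of $\langle S(\upd f), \upd f\rangle$ with the two curvature terms (correctly), whereas the paper simply asserts it.
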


\section{Kähler manifolds and pluriharmonic maps}

\subsection{Complex and Kähler manifolds} \label{subsec:ComplexAndKahlerManifolds}

Let $M$ be a complex manifold. In particular $M$ has an \emph{almost complex structure}, \ie{} 
$J \in \Gamma(\End \upT M)$ with $J^2 = -1$. This is essentially the scalar multiplication by $i$ in $\C^n$, pulled back on $\upT M$ by complex charts.
Here $J$ is called \emph{integrable} because it comes from a complex structure on $M$. 
The Newlander-Nirenberg theorem states that $J$ is integrable if and only if its Nijenhuis tensor vanishes, see \eg{} \cite{DemaillyBook}.
Note that $J$ induces an orientation of $M$ and we always assume this agrees with the given orientation.

The complexified tangent bundle of $\upT_\C M \coloneqq \upT M \otimes \C$ splits into $\pm i$-eigenspaces of $J$ as $\upT_\C M = \upT^{1,0} M \oplus \upT^{0,1} M$.
Accordingly, a complex tangent vector decomposes into types as $u = u^{1,0} + u^{0,1}$, with:
\begin{equation} \label{eq:TypeDecomposition}
 u^{1,0} = \frac{1}{2}\left(u - i J u \right) \qquad u^{0,1} = \frac{1}{2}\left(u + i J u \right)\,.
\end{equation}

The cotangent space $\upT^* M$ also has a complex structure still denoted $J$, defined by $J \alpha \coloneqq \alpha \circ J$, 
hence the analogous decomposition $\upT_\C^* M = {\upT^*}^{1,0} M \oplus {\upT^*}^{0,1} M$. 
Thankfully, it is true that $\alpha^{1,0}(u) = \alpha(u^{1,0})$ and $\alpha^{0,1}(u) = \alpha(u^{0,1})$, where on the right-hand side one takes the complexification of $\alpha$. 
 
If $f \colon M \to N$ is a smooth map where $M$ is a (almost) complex manifold, we write similarly
\begin{equation}
  \upd^{1,0} f = \frac{1}{2}\left(\upd f - i \upd f \circ J \right) \qquad \upd^{0,1} f = \frac{1}{2}\left(\upd f + i \upd f \circ J \right)\,.
\end{equation}
These are $\R$-linear maps $\upT M \to \upT_\C N$, or equivalently, $\C$-linear maps $\upT_\C M \to \upT_\C N$.
We also denote:
\begin{equation}
 \updc f \coloneqq -\upd f \circ J = -i(\upd^{1,0} f - \upd^{0,1} f)\,.
\end{equation}

\begin{remark}
If $(N, J_N)$ is also a (almost) complex manifold---this will not be the case in most of this report---one can further decompose $\upd^{1,0} f$ and $\upd^{0,1} f$ into their $(1,0)$- and $(0,1)$-parts with respect to $J_N$. 
We write accordingly $\upd^{1,0} f = \partial f + \partial \bar{f}$ and $\upd^{0,1} f = \bar{\partial} f + \bar{\partial} \bar{f}$.
$f$ is called holomorphic if $\upd f \circ J_M = J_N \circ \upd f$, which amounts to $\delbar f = 0$.
Equivalently, $f$ preserves types: $\upd f (\upT^{1,0} M) \subseteq \upT^{1,0} N$.
\end{remark}

\begin{definition}
 A \emph{Hermitian manifold} is a smooth manifold equipped with a Riemannian metric $g$ 
 and an integrable almost complex structure $J$ that are \emph{compatible}, meaning that $g(Ju, Jv) = g(u, v)$.
\end{definition}
Given a Hermitian manifold $(M, g, J)$, one defines the \emph{fundamental form} $\omega$ by $\omega(u, v) = g(Ju, v)$. 
It is a nondegenerate $2$-form on $M$, in fact it is a $(1,1)$-form, \ie{} $\omega(Ju, Jv) = \omega(u,v)$.
One can also define a Hermitian inner product $h$ on $(\upT M, J)$ by letting $h = g -i\omega$. 

\begin{example}
The Hermitian metric on $M = \C$ is $h = \upd z \otimes \upd \bar{z} = g - i \omega$ with $g = \upd x^2 + \upd y^2$ and $\omega = \upd x \wedge \upd y$.
This example should discourage anyone from using other conventions for the definition of $\omega$.
\end{example}

The fundamental form $\omega$ can be incoroporated in the data of a Hermitian manifold $(M, g, J, \omega)$, it being understood that $\omega(u, v) = g(Ju, v)$. 
This relation shows that any 2  of the 3 tensors $g$, $J$, $\omega$ determine the third, this matches the well-known 2-out-of-3 property of the unitary group $\U(n)$.

\begin{proposition} \label{prop:FundamentalForm}
Let $(M, g, I, \omega)$ be a Hermitian manifold.
\begin{enumerate}[(i)]
 \item \label{item:propFundamentalFormi} Denoting $n \coloneqq \dim_{\C} M$, one has
 \begin{equation}
        \frac{\omega^n}{n!} = \vol_M\,.
       \end{equation}
 \item \label{item:propFundamentalFormii} For all $\alpha \in \Omega^1(M, \R)$
       \begin{equation}
        * \alpha = \frac{\omega^{n-1}}{(n-1)!} \wedge J \alpha\,.
       \end{equation}
       (See \autoref{subsec:dfHarmonic} for the definition of the Hodge star.)
\end{enumerate}
\end{proposition}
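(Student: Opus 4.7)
Both parts are pointwise identities, so the plan is to work at a single point $p \in M$ and reduce everything to linear algebra in $\upT_p M$. The key preliminary is the existence of a $g$-orthonormal basis of $\upT_p M$ adapted to $J$, namely of the form $(e_1, Je_1, e_2, Je_2, \dots, e_n, Je_n)$. Such a basis exists by the standard iterative construction: once $e_1$ is chosen with $\Vert e_1\Vert = 1$, the vector $Je_1$ is automatically of unit norm and $g$-orthogonal to $e_1$ by compatibility of $g$ and $J$; the $g$-orthogonal complement of $\operatorname{span}(e_1, Je_1)$ is itself $J$-stable, and one iterates. Since $J$ induces the orientation of $M$, this basis is positively oriented, and the associated dual basis $(e_1^*, (Je_1)^*, \dots, e_n^*, (Je_n)^*)$ of $\upT_p^* M$ satisfies $\vol_M = e_1^* \wedge (Je_1)^* \wedge \cdots \wedge e_n^* \wedge (Je_n)^*$.

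For \ref{item:propFundamentalFormi}, the plan is to expand $\omega$ in this basis. A direct evaluation on pairs of basis vectors, using $\omega(u,v) = g(Ju, v)$, gives
\begin{equation*}
\omega = \sum_{k=1}^n e_k^* \wedge (Je_k)^*.
\end{equation*}
Setting $\eta_k \coloneqq e_k^* \wedge (Je_k)^*$, these $2$-forms commute and satisfy $\eta_k \wedge \eta_k = 0$. Therefore the multinomial expansion of $\omega^n = (\sum_k \eta_k)^n$ collapses to the single ``square-free'' term $n!\, \eta_1 \wedge \cdots \wedge \eta_n$, which is exactly $n!\, \vol_M$. Dividing by $n!$ yields the identity.

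For \ref{item:propFundamentalFormii}, I would use $\R$-linearity in $\alpha$ to reduce to basis $1$-forms: it suffices to verify the identity when $\alpha = e_1^*$, the cases $\alpha = (Je_k)^*$ and $k > 1$ being entirely analogous by the symmetry of the adapted basis. For $\alpha = e_1^*$, the Hodge star is computed directly from the definition as $*e_1^* = (Je_1)^* \wedge e_2^* \wedge (Je_2)^* \wedge \cdots \wedge e_n^* \wedge (Je_n)^*$. On the other hand, I expand
\begin{equation*}
\frac{\omega^{n-1}}{(n-1)!} = \sum_{k=1}^n \bigwedge_{j \ne k} \eta_j
\end{equation*}
by the same multinomial argument as in \ref{item:propFundamentalFormi}. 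When this is wedged with $J\alpha = J e_1^*$, every summand with $k \ne 1$ vanishes because it already contains $(Je_1)^*$, so only the $k=1$ term survives; it matches the expression for $*e_1^*$ up to reordering of factors.

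The only substantive obstacle is sign bookkeeping: one must carefully track the sign in $Je_1^* = \pm (Je_1)^*$ arising from the convention $J\alpha = \alpha \circ J$ together with $J^2 = -1$, and compare it with the sign produced when cyclically permuting $(Je_1)^*$ past the remaining $2n-2$ factors of $\eta_2 \wedge \cdots \wedge \eta_n$ to reach the canonical order of $*e_1^*$. Both contributions are parity-controlled and cancel, so the formula drops out. No deeper geometric input beyond the existence of the $J$-adapted orthonormal basis is needed.
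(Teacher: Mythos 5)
Your overall strategy is exactly the one the paper intends (its proof is only the two-line sketch ``use an orthonormal basis $(e_1, Je_1, \dots, e_n, Je_n)$''), and your treatment of part \ref{item:propFundamentalFormi} is complete and correct: the adapted basis exists, $\omega = \sum_k e_k^* \wedge (Je_k)^*$, and the multinomial collapse gives $\omega^n = n!\,\vol_M$.

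For part \ref{item:propFundamentalFormii}, however, the one step you defer --- the sign bookkeeping --- is precisely where the argument fails to close, and your assertion that the two sign contributions ``cancel'' is false as stated. The permutation sign from moving $(Je_1)^*$ past the $2n-2$ one-form factors of $\eta_2 \wedge \cdots \wedge \eta_n$ is $(-1)^{2n-2} = +1$, so there is nothing for the other sign to cancel against; and with the paper's convention $J\alpha \coloneqq \alpha \circ J$ one computes $(Je_1^*)(Je_1) = e_1^*(J^2 e_1) = -1$, i.e.\ $Je_1^* = -(Je_1)^*$. Your computation therefore yields $\frac{\omega^{n-1}}{(n-1)!} \wedge J e_1^* = -\,(Je_1)^* \wedge \eta_2 \wedge \cdots \wedge \eta_n = -\ast e_1^*$. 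The case $n=1$ makes the discrepancy transparent: on $\C$ with $\omega = \upd x \wedge \upd y$ one has $\ast\, \upd x = \upd y$, while $J\,\upd x = \upd x \circ J = -\upd y$. So either the proposition is to be read with the metric-dual action of $J$ on covectors, $(J\alpha)^{\sharp} = J(\alpha^{\sharp})$, which equals $-\alpha \circ J$ and restores the stated identity, or the identity carries an extra minus sign under the convention $J\alpha = \alpha \circ J$. A correct proof must carry out this sign computation explicitly and resolve the convention mismatch rather than assert that the parities cancel.
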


\begin{proof}
 Both statements are linear algebra statements in $\upT_x M$. One easily proves them using an orthonormal basis
 $(e_1, J e_1, \dots, e_n, J e_n)$ of $\upT_x M$.
\end{proof}

\begin{definition}
 A Hermitian manifold $(M, g, J, \omega)$ is called \emph{Kähler} if $\upd \omega = 0$.
\end{definition}
In other words, $\omega$ is required to be a symplectic structure. The simple condition $\upd \omega = 0$
has deep consequences on the geometry of $M$, starting with the existence of holomorphic coordinates identifying $(M, g, J, \omega)$
to $\C^n$ with the flat metric to first order. We shall not develop any theory
of Kähler manifolds in this report: $\upd \omega = 0$ is all we need, especially
for the proof of the Siu--Sampson theorem following Toledo. In fact, \autoref{sec:SiuSampsonTheorem} will be a perfect illustration of 
the provocative \cite[Metatheorem 1.2]{MR1379330}: 
\begin{center}
\emph{Kähler manifolds are complex manifolds whose geometry reduces to linear algebra.}
\end{center}
The only analysis involved in the proof in \autoref{subsubsec:Toledo} is that $\int_M \eta = 0$ if $\eta$ is exact, the rest is linear algebra!

\medskip 
One can show that $\upd \omega = 0$ is equivalent to $\nabla J = 0$. A straightforward yet key consequence is:
\begin{proposition} \label{prop:CurvatureKahler}
Let $(M, g, J, \omega)$ be a Hermitian manifold. If $M$ is K\"ahler then 
the Riemann and Ricci tensors of $M$ are of type $(1,1)$, \ie{}
 $R^M(Ju, Jv) = R^M(u,v)$ and $\Ric^M(Ju, Jv) = \Ric^M(u,v)$.
\end{proposition}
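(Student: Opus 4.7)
The hint just given states the equivalence $\upd \omega = 0 \Leftrightarrow \nabla J = 0$ where $\nabla$ is the Levi-Civita connection, so I would take this for granted and derive both claims purely from $\nabla J = 0$. In keeping with the paper's motto that ``Kähler manifolds are complex manifolds whose geometry reduces to linear algebra,'' there is no analysis here at all: just algebraic manipulation of the symmetries of $R$.

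The first step is to observe that $\nabla J = 0$ forces the curvature endomorphism to commute with $J$. From the definition
\begin{equation*}
R^M(X,Y)Z = \nabla_X\nabla_Y Z - \nabla_Y\nabla_X Z - \nabla_{[X,Y]} Z,
\end{equation*}
replacing $Z$ by $JZ$ and pulling $J$ to the left at each occurrence of $\nabla$ yields $R^M(X,Y)(JZ) = J\,R^M(X,Y)Z$. Passing to the fully covariant version $R(X,Y,Z,W) = -\langle R^M(X,Y)Z, W\rangle$ and using that $J$ is orthogonal (which is immediate from the compatibility of $g$ with $J$), this gives
\begin{equation*}
R(X,Y,JZ,JW) = -\langle J R^M(X,Y)Z, JW\rangle = -\langle R^M(X,Y)Z, W\rangle = R(X,Y,Z,W),
\end{equation*}
i.e.\ the $(1,1)$ property in the last pair of slots. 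Invoking the pair symmetry $R(X,Y,Z,W) = R(Z,W,X,Y)$ (valid for any Levi-Civita curvature) transfers the same property to the first pair:
\begin{equation*}
R(JX,JY,Z,W) = R(Z,W,JX,JY) = R(Z,W,X,Y) = R(X,Y,Z,W),
\end{equation*}
which is the desired statement for $R^M$.

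For the Ricci tensor I would work in a local orthonormal frame adapted to $J$, namely $(e_1, Je_1, \dots, e_n, Je_n)$ with $n = \dim_{\C} M$. Splitting the trace according to this decomposition,
\begin{equation*}
\Ric^M(JX,JY) = \sum_{k=1}^n \bigl[R(JX, e_k, JY, e_k) + R(JX, Je_k, JY, Je_k)\bigr].
\end{equation*}
The second summand equals $R(X, e_k, Y, e_k)$ on applying the $(1,1)$ property to both pairs simultaneously. For the first summand, the identity $R(A,B,JC,D) = -R(A,B,C,JD)$ (immediate from the commutation of $R^M(A,B)$ with $J$) combined with pair symmetry gives, after two applications, $R(JX, e_k, JY, e_k) = R(X, Je_k, Y, Je_k)$. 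Summing the two contributions recovers exactly the $2n$ terms making up $\Ric^M(X,Y)$ in the same frame.

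No step is deep: the only mild obstacle is the bookkeeping in the Ricci computation, where the $(1,1)$ property cannot be applied simultaneously to both pairs of slots; the choice of a $J$-invariant frame is what makes the two halves of the trace conspire correctly.
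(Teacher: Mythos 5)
Your proof is correct, and it supplies an argument that the paper itself omits: \autoref{prop:CurvatureKahler} is stated there as a ``straightforward yet key consequence'' of $\nabla J = 0$ with no further justification, so there is no authorial proof to compare against. Your chain of reasoning --- $\nabla J = 0$ forces $[R^M(X,Y), J] = 0$, skew-adjointness of $J$ and the pair symmetry $R(X,Y,Z,W) = R(Z,W,X,Y)$ then give the $(1,1)$ property in both pairs of slots, and a trace computation handles $\Ric^M$ --- is the standard one and every step checks out. One small simplification for the Ricci part: the $J$-adapted frame is not needed. Since $R(JX,e_j,JY,e_j) = R(X,Je_j,Y,Je_j)$ by your two-slot moves, and $(Je_j)_j$ is again an orthonormal frame whenever $(e_j)_j$ is (because $J$ is orthogonal), the frame-independence of the trace gives $\Ric^M(JX,JY) = \sum_j R(X,Je_j,Y,Je_j) = \Ric^M(X,Y)$ directly, with no splitting of the sum into two halves.
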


\subsection{Pluriharmonic maps}
\label{subsec:Pluriharmonic}

In this subsection, $N$ is any Riemannian manifold. 
If $(M, g, J, \omega)$ is a Kähler manifold, we can nicely combine \autoref{prop:deltanabladf} with \autoref{prop:FundamentalForm} \ref{item:propFundamentalFormii} and use closedness of $\omega$  to obtain:

\begin{proposition} \label{prop:DeltaHermitian}
Let $f \colon M \to N$ be a smooth map, where $(M, g, J, \omega)$ is a Kähler manifold of complex dimension $n$. Then
\begin{equation}
 -\Delta f =  \ast \left( \frac{\omega^{n-1}}{(n-1)!} \wedge \upd_\nabla \updc f \right)\,.
\end{equation}
\end{proposition}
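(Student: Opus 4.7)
The plan is to start from the identity $-\Delta f = \upd_\nabla^* \upd f$ (Proposition \ref{prop:deltanabladf}) and then rewrite the right-hand side using the codifferential formula from Definition \ref{def:battery2}, the pointwise identity of Proposition \ref{prop:FundamentalForm}\ref{item:propFundamentalFormii}, and the Kähler condition $\upd \omega = 0$.

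First I would unpack the codifferential. With $m = \dim_\R M = 2n$ and $k = 1$, Definition \ref{def:battery2} gives $\upd_\nabla^* \upd f = (-1)^{2n \cdot 0 + 1} \ast \upd_\nabla \ast \upd f = -\ast \upd_\nabla \ast \upd f$. Next I would extend Proposition \ref{prop:FundamentalForm}\ref{item:propFundamentalFormii} from real-valued to $E$-valued $1$-forms: since the Hodge star on $\Omega^\bullet(M, E)$ is defined fiberwise by $\ast(\omega \otimes s) = (\ast \omega) \otimes s$, and since the action of $J$ on $E$-valued $1$-forms is done on the $\upT^* M$ factor alone, the identity $\ast \alpha = \frac{\omega^{n-1}}{(n-1)!} \wedge J\alpha$ carries over verbatim. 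Applying this to $\alpha = \upd f$ and noting that $J(\upd f) = \upd f \circ J = -\updc f$ by the definition of $\updc$, I obtain
\begin{equation}
\ast \upd f = -\frac{\omega^{n-1}}{(n-1)!} \wedge \updc f.
\end{equation}

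Now I would apply $\upd_\nabla$ to this identity. Since $\frac{\omega^{n-1}}{(n-1)!}$ is a scalar $(2n-2)$-form, the Leibniz rule for $\upd_\nabla$ reads
\begin{equation}
\upd_\nabla\!\left(\frac{\omega^{n-1}}{(n-1)!} \wedge \updc f\right) = \frac{\upd(\omega^{n-1})}{(n-1)!} \wedge \updc f + \frac{\omega^{n-1}}{(n-1)!} \wedge \upd_\nabla \updc f,
\end{equation}
the sign $(-1)^{2n-2} = 1$ being trivial. The Kähler condition $\upd \omega = 0$ kills the first term. Substituting back gives
\begin{equation}
-\Delta f = \upd_\nabla^* \upd f = -\ast \upd_\nabla \ast \upd f = \ast\!\left(\frac{\omega^{n-1}}{(n-1)!} \wedge \upd_\nabla \updc f\right),
\end{equation}
which is the desired formula.

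There is no hard step here: the content is entirely sign-bookkeeping and verifying that Proposition \ref{prop:FundamentalForm}\ref{item:propFundamentalFormii} transfers to vector-valued forms. The only place where the Kähler hypothesis genuinely enters is in making $\omega^{n-1}$ closed, which is exactly what allows $\upd_\nabla$ to commute past $\omega^{n-1}/(n-1)!$; without it, one would pick up an extraneous curvature-of-$\omega$ term spoiling the formula.
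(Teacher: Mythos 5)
Your proof is correct and follows essentially the same route as the paper's: start from $-\Delta f = \upd_\nabla^*\upd f$, unpack the codifferential as $-\ast\upd_\nabla\ast$, substitute $\ast\upd f = -\frac{\omega^{n-1}}{(n-1)!}\wedge\updc f$ via \autoref{prop:FundamentalForm}, apply the Leibniz rule, and kill the $\upd(\omega^{n-1})$ term using $\upd\omega = 0$. Your explicit remark that the Hodge-star identity transfers to $E$-valued forms because $\ast$ acts fiberwise is a point the paper leaves implicit, but the argument is the same.
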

\begin{proof}
 \begin{equation}
  \begin{aligned}
   -\Delta f &= \upd_\nabla^* \upd f & \quad& \text{by \autoref{prop:deltanabladf}}\\
   &= (-1)^{2n(k-1)+1} \ast \upd_\nabla \ast \upd f & \quad& \text{by definition of $\upd_\nabla^*$ (cf \autoref{def:battery})}\\
   &= \ast \upd_\nabla \left(\frac{\omega^{n-1}}{(n-1)!} \wedge \updc f\right) & \quad& 
   \text{by \autoref{prop:FundamentalForm} \ref{item:propFundamentalFormii}}\\
   &= \ast \left(\frac{\upd (\omega^{n-1})}{(n-1)!} \wedge \updc f + \frac{\omega^{n-1}}{(n-1)!} \wedge \upd_\nabla  \updc f \right) & \quad& 
   \text{by definition of $\upd_\nabla$}\\
   &= \ast \left(\frac{\omega^{n-1}}{(n-1)!} \wedge \upd_\nabla  \updc f \right) & \quad& 
   \text{since $\upd \omega  = 0$ ($M$ Kähler)}\\
 \end{aligned}
 \end{equation}
\end{proof}

\begin{corollary}
 Let $f \colon M \to N$ be a smooth map, where $(M, g, J, \omega)$ is a Kähler manifold of complex dimension $n$. Then $f$ is harmonic
 if and only if $\omega^{n-1} \wedge \upd_\nabla \updc f = 0$.
\end{corollary}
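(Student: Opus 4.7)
The corollary follows immediately from \autoref{prop:DeltaHermitian} once we recall that the Hodge star is a pointwise linear isometry. Indeed, \autoref{thm:CharacHarmonic} characterizes harmonicity of $f$ by $\Delta f = 0$, and \autoref{prop:DeltaHermitian} expresses $-\Delta f$ as the Hodge star of the $2n$-form $\frac{\omega^{n-1}}{(n-1)!} \wedge \upd_\nabla \updc f$.

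The plan is therefore just to chain these equivalences: $f$ harmonic iff $\Delta f = 0$ iff $\ast \bigl( \frac{\omega^{n-1}}{(n-1)!} \wedge \upd_\nabla \updc f \bigr) = 0$. Since $\ast$ is an isomorphism of fibers (being a pointwise linear isometry, as recorded just after \autoref{def:battery}), the latter vanishes pointwise if and only if $\frac{\omega^{n-1}}{(n-1)!} \wedge \upd_\nabla \updc f = 0$ pointwise, and clearing the nonzero constant $(n-1)!$ yields the stated condition $\omega^{n-1} \wedge \upd_\nabla \updc f = 0$.

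There is no real obstacle here: this is a one-line consequence of the preceding proposition together with invertibility of $\ast$. The only thing worth flagging is that $\upd_\nabla \updc f \in \Omega^2(M, E)$, so $\frac{\omega^{n-1}}{(n-1)!} \wedge \upd_\nabla \updc f$ is a top-degree $E$-valued form and its Hodge star is a section of $E = f^* \upT N$, matching the type of $\Delta f$, as it must.
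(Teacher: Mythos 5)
Your argument is correct and is exactly the route the paper intends: the corollary is stated as an immediate consequence of \autoref{prop:DeltaHermitian}, using that the Hodge star is a pointwise isomorphism so that $\Delta f$ vanishes if and only if $\omega^{n-1} \wedge \upd_\nabla \updc f$ does. Nothing is missing.
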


An important special case is when $\dim_{\C}M = 1$, \ie $M = C$ is a Riemann surface (complex curve), 
in which case any compatible metric is Kähler. Then $f \colon C \to N$ is harmonic if and only if
$\upd_\nabla \updc f = 0$. Recall that $\upd_\nabla$ does not see the metric on $C$ (only depends on $\nabla = f^* \nabla^N$),
and neither does $\updc$. Therefore $f \colon C \to N$ being harmonic only depends on the complex structure on $C$:

\begin{corollary} \label{cor:HarmonicRS}
 Let $C$ be a complex curve. The harmonicity of a smooth map $f \colon C \to N$ does not depend on the choice
 of a compatible metric on $C$.
\end{corollary}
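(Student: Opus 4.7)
The plan is to reduce the harmonicity condition to an expression that manifestly does not involve the metric on $C$, using the preceding corollary. The key observation is dimensional: a Riemann surface $C$ is a real $2$-manifold, so any $2$-form is top-degree and in particular closed. Therefore any Hermitian metric compatible with the complex structure $J$ on $C$ is automatically Kähler, meaning the preceding corollary applies.

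First I would invoke the previous corollary with $n = \dim_\C C = 1$. The condition $\omega^{n-1} \wedge \upd_\nabla \updc f = 0$ becomes simply $\upd_\nabla \updc f = 0$, since $\omega^0 = 1$. Next I would carefully inspect each ingredient in this equation for metric dependence. The exterior covariant derivative $\upd_\nabla$ depends on the connection $\nabla = f^* \nabla^N$ in $f^* \upT N$, which is determined by $f$ and the Levi--Civita connection on $N$, with no input from the metric on $C$ (this is already highlighted in the remark just before \autoref{prop:dfclosed}). Likewise, $\updc f = -\upd f \circ J$ depends only on $f$ and on the almost complex structure $J$ on $C$, not on the metric.

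Now suppose $g$ and $g'$ are two Riemannian metrics on $C$ both compatible with the fixed complex structure $J$. Both turn $C$ into a Kähler manifold, and by the analysis above the harmonicity condition $\upd_\nabla \updc f = 0$ coming from either Kähler structure coincides, since neither $\upd_\nabla$ nor $\updc$ is altered by replacing $g$ with $g'$. Combined with the equivalences of \autoref{thm:CharacHarmonic} (applied for each choice of metric separately), this shows $f$ is harmonic with respect to $g$ if and only if it is harmonic with respect to $g'$.

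There is no real obstacle here; the proof is essentially a bookkeeping exercise once the previous corollary is in hand. The only minor subtlety to double-check is that the Hodge star notation in the previous corollary was used to rewrite $\Delta f$ via \autoref{prop:DeltaHermitian}, but since we only need the vanishing statement and the Hodge star is invertible, the metric-independence of the condition $\upd_\nabla \updc f = 0$ is unambiguous.
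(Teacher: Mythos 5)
Your proof is correct and follows essentially the same route as the paper: reduce via the preceding corollary (with $n=1$, so any compatible metric is automatically Kähler) to the condition $\upd_\nabla \updc f = 0$, then observe that neither $\upd_\nabla$ nor $\updc$ sees the metric on $C$.
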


A consequence of \autoref{cor:HarmonicRS} is the sanity of the definition of pluriharmonicity:

\begin{definition}
 A smooth map $f \colon M \to N$ where $M$ is a complex manifold is called \emph{pluriharmonic} if for every $1$-dimensional complex submanifold $C \subseteq M$,
 the restriction $f_{|C} \colon C \to N$ is harmonic.
 \end{definition}

Pluriharmonicity can alternatively be defined by the equation $\upd_\nabla \updc f = 0$:
\begin{proposition} \label{prop:CharacterizationsPluriharmonic}
 Let $f \colon M \to N$ be a smooth map where $M$ is a complex manifold.
\begin{equation}
 \text{$f$ pluriharmonic} \quad \Leftrightarrow \quad \upd_\nabla \updc f = 0\,.
\end{equation}
We also have the further characterizations:
\begin{equation}
 \text{$f$ pluriharmonic}
 \quad \Leftrightarrow \quad \delbar_\nabla \upd^{1,0} f = 0
 \quad \Leftrightarrow \quad \del_\nabla \upd^{0,1} f = 0
 \quad \Leftrightarrow \quad  \left(\nabla^2 f\right)^{1,1} = 0\,.
\end{equation}
\end{proposition}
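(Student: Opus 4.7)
The plan is to derive all four characterizations from \autoref{prop:dfclosed} ($\upd_\nabla \upd f = 0$) together with the type decomposition induced by the complex structure on $M$. Writing $\upd f = \upd^{1,0} f + \upd^{0,1} f$ and splitting $\upd_\nabla = \del_\nabla + \delbar_\nabla$, where $\del_\nabla$ raises the $(1,0)$-degree and $\delbar_\nabla$ the $(0,1)$-degree, I apply $\upd_\nabla$ to $\upd f$ and sort the result by bidegree. The $(2,0)$ and $(0,2)$ parts give $\del_\nabla \upd^{1,0} f = 0$ and $\delbar_\nabla \upd^{0,1} f = 0$ automatically, while the $(1,1)$ part yields $\delbar_\nabla \upd^{1,0} f = -\del_\nabla \upd^{0,1} f$. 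Substituting into $\upd_\nabla \updc f = -i(\upd_\nabla \upd^{1,0} f - \upd_\nabla \upd^{0,1} f)$ collapses it to $\upd_\nabla \updc f = -2i\, \delbar_\nabla \upd^{1,0} f = 2i\, \del_\nabla \upd^{0,1} f$, delivering three of the four equivalences in a single stroke.

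For the equivalence with pluriharmonicity, I argue by restriction to complex curves $C \subseteq M$. By naturality of $\updc$ and of the exterior covariant derivative under the inclusion $C \hookrightarrow M$, one has $(\upd_\nabla \updc f)|_C = \upd_\nabla \updc(f|_C)$. \autoref{prop:DeltaHermitian} specialized to $n=1$, together with \autoref{cor:HarmonicRS}, shows that $f|_C$ is harmonic if and only if $\upd_\nabla \updc(f|_C) = 0$. So $f$ is pluriharmonic exactly when $\upd_\nabla \updc f$ restricts to zero on every complex curve. To upgrade this to $\upd_\nabla \updc f = 0$ on $M$, observe that $\upd_\nabla \updc f$ is of type $(1,1)$ by the first paragraph, hence a real $(1,1)$-form; such a form is determined by its values $\beta(u, Ju)$ on $J$-invariant real $2$-planes, since the associated symmetric bilinear form $(u,v) \mapsto \beta(u, Jv)$ is recovered from its diagonal by polarization. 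Every $J$-invariant $2$-plane in $\upT_p M$ arises as $\upT_p C$ for some local complex curve (built as a linear curve in holomorphic coordinates), so these restrictions exhaust the relevant data.

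Finally, for $(\nabla^2 f)^{1,1} = 0 \Leftrightarrow \delbar_\nabla \upd^{1,0} f = 0$, I fix a compatible Kähler metric on $M$; the other three conditions are metric-independent, and the Kähler assumption guarantees that $\nabla^M$ preserves the $(1,0)/(0,1)$ splitting. For local extensions of $u \in \upT^{1,0} M$ and $v \in \upT^{0,1} M$, the antisymmetrization formula \eqref{eq:antisymmetrization1} combined with $\upd^{1,0} f(v) = 0$, the torsion-free identity $[u,v] = \nabla_u v - \nabla_v u$, and type preservation $[u,v]^{1,0} = -\nabla_v u$ should collapse $(\delbar_\nabla \upd^{1,0} f)(u,v)$ to $-\nabla^2 f(u,v)$. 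Since $(\nabla^2 f)^{1,1}$ is precisely the restriction of the symmetric Hessian to $\upT^{1,0} \times \upT^{0,1}$, this identifies the two vanishing conditions. The main obstacle I anticipate is keeping the bidegree bookkeeping clean, particularly in this last computation and in checking that every $J$-invariant tangent plane is indeed tangent to a local complex curve.
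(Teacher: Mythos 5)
Your proposal follows essentially the same route as the paper: type-decompose $\upd_\nabla \upd f = 0$ to kill the $(2,0)$ and $(0,2)$ parts and relate $\delbar_\nabla \upd^{1,0} f$ to $\del_\nabla \upd^{0,1} f$, express $\upd_\nabla \updc f$ as a multiple of $\delbar_\nabla \upd^{1,0} f$, reduce pluriharmonicity to the vanishing of this $(1,1)$-form on pairs $(u,Ju)$ tangent to local complex curves, and identify it with the $(1,1)$-part of the Hessian by a direct computation. The only discrepancy is a harmless sign slip --- the identity is $\upd_\nabla \updc f = 2i\,\delbar_\nabla \upd^{1,0} f = -2i\,\del_\nabla \upd^{0,1} f$, not the opposite --- which does not affect any of the stated equivalences.
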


\begin{remark}
 We have naturally written $\upd_\nabla = \upd_\nabla^{1,0} + \upd_\nabla^{0,1} = \del_\nabla + \delbar_\nabla$.
\end{remark}

\begin{remark}
To make sense of the Hessian in the last one, choose any compatible Riemannian metric on $M$ (see \autoref{def:HessianAndLaplacian}). Also see \autoref{rem:LeviForm}. Due to this characterization, 
pluriharmonic maps have sometimes been called \emph{(1,1)-geodesic} maps.
\end{remark}

\begin{proof}[Proof of \autoref{prop:CharacterizationsPluriharmonic}]
Recall that $\upd_\nabla \upd f = 0$ (\autoref{prop:dfclosed}). Decomposing into types:
\begin{equation}
 \underbrace{\del_\nabla \upd^{1,0} f}_{{(\upd_\nabla \upd f)}^{2,0} = 0} + 
 \underbrace{\del_\nabla \upd^{0,1} f + \delbar_\nabla \upd^{1,0} f}_{{(\upd_\nabla \upd f)}^{1,1} = 0} + 
 \underbrace{\delbar_\nabla \upd^{0,1} f}_{{(\upd_\nabla \upd f)}^{0,2} = 0} = 0
\end{equation}
On the other hand:
\begin{equation}
\begin{split}
 \upd_\nabla \updc f &= -i \upd_\nabla \left(\upd^{1,0} f - \upd^{0,1} f\right)\\
 &= -i \left(\del_\nabla \upd^{1,0} f + \del_\nabla \upd^{0,1} f - \delbar_\nabla \upd^{1,0} f - \delbar_\nabla \upd^{0,1} f \right)
\end{split}
\end{equation}
We thus find
\begin{equation}
 \upd_\nabla \updc f = -2i  \del_\nabla \upd^{0,1} f = 2i \delbar_\nabla \upd^{1,0} f \,.
\end{equation}
It follows that $\upd_\nabla \updc f =0 \Leftrightarrow \delbar_\nabla \upd^{1,0} f = 0 \Leftrightarrow \del_\nabla \upd^{0,1} f = 0$.
We also learn that $\upd_\nabla \updc f$ is of type $(1,1)$. 

If $\upd_\nabla \updc f = 0$, then this equation still holds on any complex curve $C \subseteq M$, 
where it means that $f$ is harmonic, hence $f$ is pluriharmonic. Conversely, if $f$ is pluriharmonic, then $\upd_\nabla \updc f = 0$ in restriction to any complex curve. As a $2$-form of type $(1,1)$, $\upd_\nabla \updc f$ vanishes if and only if it vanishes on pairs of the form
$(u, Ju)$. Since any such pair is tangent to some complex curve $C \subseteq M$, the conclusion follows.

It remains to show that $\upd_\nabla \updc f = 0 \Leftrightarrow \left(\nabla^2 f\right)^{1,1} = 0$. By \autoref{prop:Antisymmetrization},
\begin{equation}
 (\upd_\nabla \updc f)(u,v) = \bar{\nabla}_u(\updc f(v)) - \bar{\nabla}_v(\updc f(u))
 = -\nabla^2 f(u, Jv) + \nabla^2 f(v, Ju)\,.
\end{equation}
It follows that
\begin{equation}
 -(\upd_\nabla \updc f)(Ju,v)
 = \nabla^2 f(Ju, Jv) + \nabla^2 f(u, v)
 = 2 \left(\nabla^2 f\right)^{1,1}(u,v)
\end{equation}
($2 T^{1,1}(u,v) = T(u,v) + T(Ju,Jv)$ holds for any $2$-covariant tensor $T$.) The conclusion follows.
\end{proof}

\begin{remark} \label{rem:LeviForm}
As it turns out, the tensor $\left(\nabla^2 f\right)^{1,1} = -\frac{1}{2}(\upd_\nabla \updc f)(Ju,v)$ does not depend on the choice of a compatible metric on $M$.
One may call it the \emph{Levi form} of $f$.
\end{remark}

Below are some properties of pluriharmonic maps. For more results, we refer to \cite{MR1082880}.
\begin{proposition} \label{prop:PropertiesPluriharmonic}
 Let $f \colon M \to N$ where $M$ is a complex manifold and $N$ is a Riemannian manifold.
 \begin{enumerate}[(i)]
  \item \label{item:propPropertiesPluriharmonici} If $f$ is pluriharmonic, the restriction of $f$ to any complex submanifold of $M$ is pluriharmonic.
  \item \label{item:propPropertiesPluriharmonicii} If $f$ is pluriharmonic, then $f$ is harmonic with respect to any compatible K\"ahler metric on $M$.
  \item \label{item:propPropertiesPluriharmoniciii} If $N$ is a Kähler manifold and $f$ is $\pm$-holomorphic\footnote{We call \emph{$\pm$-holomorphic} a map that is holomorphic or antiholomorphic.}, then $f$ is pluriharmonic.
 \end{enumerate}
\end{proposition}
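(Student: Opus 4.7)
The plan is to dispatch parts \ref{item:propPropertiesPluriharmonici} and \ref{item:propPropertiesPluriharmonicii} as immediate consequences of results already in hand, and concentrate the actual work on \ref{item:propPropertiesPluriharmoniciii}.

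For \ref{item:propPropertiesPluriharmonici} I would simply unwind the definition: if $M' \subseteq M$ is a complex submanifold and $C \subseteq M'$ is any complex curve, then $C$ is also a complex curve of $M$, so the hypothesis of pluriharmonicity of $f$ gives that $f|_C = (f|_{M'})|_C$ is harmonic. Hence $f|_{M'}$ is pluriharmonic. For \ref{item:propPropertiesPluriharmonicii} I would use \autoref{prop:CharacterizationsPluriharmonic} to rewrite pluriharmonicity as $\upd_\nabla \updc f = 0$, and then read off harmonicity directly from \autoref{prop:DeltaHermitian}: any compatible Kähler metric gives $-\Delta f = \ast\bigl(\tfrac{\omega^{n-1}}{(n-1)!} \wedge \upd_\nabla \updc f\bigr) = 0$.

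The content is in \ref{item:propPropertiesPluriharmoniciii}, and the plan is to combine \emph{two} type decompositions of $\upd_\nabla \upd f = 0$: the decomposition of forms on $M$ with respect to $J_M$, and the decomposition of values on $N$ with respect to $J_N$. First, since $N$ is Kähler, $\nabla^N J_N = 0$, so the pullback connection $\nabla = f^*\nabla^N$ preserves the splitting $f^* \upT_\C N = f^* \upT^{1,0} N \oplus f^* \upT^{0,1} N$. Next, if $f$ is holomorphic then (using the formula $\upd^{1,0} f(u) = \upd f(u^{1,0})$) the form $\upd^{1,0} f$ takes values in $f^* \upT^{1,0} N$ and $\upd^{0,1} f$ in $f^* \upT^{0,1} N$. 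By \autoref{prop:dfclosed} we have $\upd_\nabla \upd f = 0$; taking the $(1,1)$-part with respect to $J_M$ gives $\delbar_\nabla \upd^{1,0} f + \del_\nabla \upd^{0,1} f = 0$. Because $\nabla$ preserves $N$-type, the two summands live in $\Omega^{1,1}(M, f^*\upT^{1,0} N)$ and $\Omega^{1,1}(M, f^*\upT^{0,1} N)$ respectively. Since these subspaces are in direct sum, each summand vanishes; in particular $\delbar_\nabla \upd^{1,0} f = 0$, which is exactly pluriharmonicity by \autoref{prop:CharacterizationsPluriharmonic}. The antiholomorphic case is handled the same way (swap the roles of $\upd^{1,0} f$ and $\upd^{0,1} f$, or conjugate the complex structure of $N$).

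The only subtle step is the type-splitting argument in \ref{item:propPropertiesPluriharmoniciii}: one must be careful that the bidegree is being computed \emph{on $N$}, not on $M$, and that Kählerness of $N$ is what makes this splitting compatible with $\nabla$. Everything else reduces to bookkeeping with the definitions collected in \autoref{subsec:ComplexAndKahlerManifolds}.
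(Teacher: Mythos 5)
Your treatment of \ref{item:propPropertiesPluriharmonici} and \ref{item:propPropertiesPluriharmonicii} is exactly the paper's (trivial from the definition, and \autoref{prop:DeltaHermitian} respectively). For \ref{item:propPropertiesPluriharmoniciii} your argument is correct but packaged differently. The paper works with $\updc f$ directly: $\pm$-holomorphicity gives $\updc f = \pm J_N \circ \upd f$, and since $\nabla J_N = 0$ one may commute $J_N$ past $\upd_\nabla$, so $\upd_\nabla \updc f = \pm J_N \circ \upd_\nabla \upd f = 0$ by \autoref{prop:dfclosed} --- a one-line computation with no type decomposition on either side. You instead decompose by bidegree on $M$ \emph{and} by $J_N$-type on $N$, and kill the two summands of the $(1,1)$-part of $\upd_\nabla \upd f = 0$ by observing that they land in the complementary subbundles $f^*\upT^{1,0}N$ and $f^*\upT^{0,1}N$. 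The two arguments are equivalent in substance --- the identity $\updc f = \pm J_N \circ \upd f$ is precisely the statement that $\upd^{1,0}f$ and $\upd^{0,1}f$ take values in the appropriate $J_N$-eigenbundles, and both proofs rest on the same two facts, namely $\nabla J_N = 0$ and $\upd_\nabla \upd f = 0$. Yours makes the $N$-type bookkeeping explicit (and as you note, that is where care is needed); the paper's is shorter because it never splits anything. Both are fine.
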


\begin{remark}
 When $N = \R$, a function $f \colon M \to \R$ is pluriharmonic if and only if it is the real part of a holomorphic function.
 I am unsure what a generalization of that property could be.
\end{remark}

\begin{proof}[Proof of \autoref{prop:PropertiesPluriharmonic}]
 \ref{item:propPropertiesPluriharmonici} is trivial by definition of pluriharmonicity. \ref{item:propPropertiesPluriharmonicii} follows from 
 \autoref{prop:DeltaHermitian}. 
 For \ref{item:propPropertiesPluriharmoniciii}: if $f$ is $\pm$-holomorphic, then $\updc f = \upd f \circ J_M = \pm J_N \circ \upd f$.
 Since $N$ is Kähler, $\nabla J_N = 0$ ($\nabla J = 0$ if and only if $J$ is integrable and $\upd \omega = 0$ in any almost Hermitian manifold).
 It easily follows that $\upd_\nabla (J_N \circ \alpha) = J_N \circ \upd_\nabla (\alpha)$ for any $\alpha \in \Omega^1(M, f^* \upT N)$.
 In particular, $\upd_\nabla \updc f = \pm \upd_\nabla (J_N \circ \upd f) = \pm J_N \circ \upd_\nabla \upd f$.
 Recall that $\upd_\nabla \upd f = 0$ is always true (\autoref{prop:dfclosed}), therefore we find $\upd_\nabla \updc f = 0$, \ie{} $f$ is pluriharmonic.
\end{proof}

\begin{remark}
 Nicolas Tholozan pointed out to me that the converse of \autoref{prop:PropertiesPluriharmonic} \ref{item:propPropertiesPluriharmonicii}
 is also true locally: if $f$ is harmonic with respect to any locally defined K\"ahler metric, then $f$ is pluriharmonic.
\end{remark}

\begin{example} \label{ex:MinimalSpheres}
 As a special case of \autoref{prop:PropertiesPluriharmonic} \ref{item:propPropertiesPluriharmoniciii}, if $N$ is Kähler manifold and $M$
 is a complex submanifold, then the embedding of $M$ in $N$ is minimal (\ie{} it is a harmonic isometric immersion). For example,
 consider complex submanifolds $\CP^1 \to \CP^3$ induced by linear subspaces $\C^2 \to \C^4$. Such embeddings of $\CP^1 \approx S^2$ in $\CP^3 \approx S^4$
 thus are minimal spheres.
\end{example}

\section{The Siu--Sampson theorem} \label{sec:SiuSampsonTheorem}

The main goal of this section is to explain the Siu--Sampson theorem below  (\autoref{thm:SiuSampson}), first proven by Sampson \cite{MR833809}\footnote{See
Theorem 1 p.\ 129: Although Sampson does not use the terminology \emph{pluriharmonic}, the equations
$y_{\alpha | \bar{\beta}}^j = 0$ mean that $\upd_{\nabla}^{0,1} \upd^{1,0}f = 0$,
\ie{} $f$ is pluriharmonic by \autoref{prop:CharacterizationsPluriharmonic}.
} following 
the Bochner technique of Siu \cite{MR584075}. 
\begin{theorem}[Siu, Sampson] \label{thm:SiuSampson}
 Let $M$ be a compact Kähler manifold and let $N$ be Riemannian manifold of nonpositive Hermitian sectional curvature.
 If $f \colon M \to N$ is harmonic, then $f$ is pluriharmonic. Moreover:
 \begin{equation} \label{eq:SiuSampson}
   R^N\left(X, Y, \bar{X}, \bar{Y}\right) = 0
 \end{equation}
 for all $x\in M$ and for all $X, Y \in \upd f(\upT_x^{1,0} M)$.
\end{theorem}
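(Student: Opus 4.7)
My plan is to follow Siu's Bochner-type strategy as streamlined by Toledo: I will derive on compact Kähler $M$ an integral identity of the form
\[
(\text{nonneg.~Hessian}^{1,1}\text{-norm of }f) \;=\; (\text{Hermitian sectional curvature integral of }N\text{ along }\upd^{1,0}f),
\]
so that the nonpositivity hypothesis forces both sides to vanish. By \autoref{prop:CharacterizationsPluriharmonic} pluriharmonicity of $f$ amounts to $\delbar_\nabla \upd^{1,0} f = 0$, and by \autoref{prop:DeltaHermitian} harmonicity of $f$ reads $\omega^{n-1} \wedge \delbar_\nabla \upd^{1,0} f = 0$. Throughout I complexify $E = f^*\upT N$ to $E_\C$, extending the metric $\C$-bilinearly and $\nabla$ complex-linearly, and I set $\alpha \coloneqq \upd^{1,0} f \in \Omega^{1,0}(M, E_\C)$, so that $\bar\alpha = \upd^{0,1} f$.

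The heart of the argument is a pointwise Siu--Sampson identity for the real $(1,1)$-form $\sigma \coloneqq \langle \alpha \wedge \bar\alpha \rangle$ on $M$. Expanding $\del\delbar\sigma$ using the Leibniz rule for $\upd_\nabla$, the metric-compatibility of $\nabla$, and the curvature relation $\upd_\nabla^2 = f^*R^N \wedge \cdot$ (see \eqref{eq:CurvatureConnection}), I obtain schematically
\[
\del\delbar \sigma \;=\; \langle \delbar_\nabla \alpha \wedge \overline{\delbar_\nabla\alpha}\rangle \;-\; \langle \del_\nabla \alpha \wedge \overline{\del_\nabla\alpha}\rangle \;+\; Q(\alpha,\bar\alpha),
\]
where $Q(\alpha,\bar\alpha)$ is a $(2,2)$-form built linearly from $f^*R^N$ evaluated on images of $\alpha$ and $\bar\alpha$. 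This step is pure linear algebra, in keeping with the Kähler metatheorem quoted in \autoref{subsec:ComplexAndKahlerManifolds}; the condition $\upd\omega = 0$ is used only to shift $\omega$ freely past $\del$ and $\delbar$.

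Wedging with $\omega^{n-2}/(n-2)!$ (treating $n=1$ separately via \autoref{cor:HarmonicRS}) and integrating over compact $M$ kills the left side by Stokes's theorem --- the sole analytic input, as foreshadowed in \autoref{subsec:ComplexAndKahlerManifolds}. Harmonicity in the form $\omega^{n-1}\wedge \delbar_\nabla\alpha = 0$, together with standard linear-algebra identities for primitive $(1,1)$-forms with values in a Hermitian bundle, identifies $\int_M \langle \delbar_\nabla\alpha \wedge \overline{\delbar_\nabla\alpha}\rangle \wedge \omega^{n-2}/(n-2)!$ with a positive multiple of $\|\delbar_\nabla\alpha\|_{\upL^2}^2$, and similarly for $\del_\nabla\alpha$; the conjugate identity matches the two Hessian norms so that they cancel, leaving
\[
c\,\|\delbar_\nabla \alpha\|_{\upL^2}^2 \;=\; \int_M Q(\alpha, \bar\alpha) \wedge \frac{\omega^{n-2}}{(n-2)!}, \qquad c > 0.
\]
Unpacking $Q$ exhibits its integrand pointwise as a sum of terms $R^N(X, Y, \bar X, \bar Y)$ with $X, Y \in \upd f(\upT^{1,0}_x M)$, i.e. precisely the Hermitian sectional curvature. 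Under the nonpositivity hypothesis the right side is $\leq 0$ while the left side is $\geq 0$, forcing both to vanish: $\delbar_\nabla\alpha = 0$ gives pluriharmonicity, and pointwise vanishing of the integrand gives \eqref{eq:SiuSampson}.

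The chief obstacle I anticipate is the sign-and-coefficient bookkeeping in the Bochner identity for $\del\delbar\sigma$: several cross-terms arise from the decomposition $\upd_\nabla = \del_\nabla + \delbar_\nabla$, and it is the Kähler condition (equivalently $\nabla J = 0$) that lets $\|\del_\nabla\alpha\|^2$ and $\|\delbar_\nabla\alpha\|^2$ be matched under harmonicity so as to cancel, leaving a single clean Hessian${}^{1,1}$-term. Without this cancellation one would only recover the full norm of $\nabla^2 f$, losing all information about the $(1,1)$-part that encodes pluriharmonicity --- and the whole argument would collapse to the Eells--Sampson formula of \autoref{subsec:Weitzenbock} with no essential use of the complex structure.
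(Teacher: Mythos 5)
Your strategy is essentially the paper's: your $\sigma = \langle \alpha \wedge \bar\alpha\rangle$ is, up to a constant, the energy form $\varepsilon(f)$ of Ohnita--Valli, and $\del\delbar\sigma \wedge \omega^{n-2}$ is (up to a constant) Toledo's exact form $\eta$ from \eqref{eq:ToledoIdentity}; the integration-by-Stokes, the Hodge--Riemann sign argument of \autoref{lem:HodgeBilinear}, and the pointwise identification of the curvature term all match \autoref{subsubsec:Toledo} and \eqref{eq:BochnerOhnitaValli}.

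There is, however, one step which, as you describe it, would make the argument collapse. You say that ``the conjugate identity matches the two Hessian norms so that they cancel, leaving $c\,\Vert \delbar_\nabla\alpha\Vert_{\upL^2}^2 = \int_M Q$'': if $\Vert\delbar_\nabla\alpha\Vert^2$ and $\Vert\del_\nabla\alpha\Vert^2$ genuinely cancelled against each other, the left-hand side would be zero and you could conclude nothing about $\delbar_\nabla\alpha$, i.e.\ nothing about pluriharmonicity. What actually happens is that the second term is absent from the start: $\del_\nabla\alpha = \del_\nabla \upd^{1,0}f$ is the $(2,0)$-part of $\upd_\nabla\upd f$, which vanishes identically by \autoref{prop:dfclosed} (see the type decomposition in the proof of \autoref{prop:CharacterizationsPluriharmonic}) --- no harmonicity and no Kähler condition is needed for this, and likewise $\overline{\delbar_\nabla\alpha} = \del_\nabla\upd^{0,1}f = -\delbar_\nabla\alpha$, so only one Hessian term survives. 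Harmonicity enters solely through \autoref{prop:DeltaHermitian}, as the primitivity $\omega^{n-1}\wedge\delbar_\nabla\alpha = 0$ that makes the Hodge--Riemann pairing definite on $\delbar_\nabla\alpha$ (equivalently, it kills the trace term $\Vert\tr(\nabla^{0,1}\upd^{1,0}f)\Vert^2$, which is proportional to $\Vert\Delta f\Vert^2$, in \eqref{eq:BochnerOhnitaValli}). Finally, be aware that the two computations you leave ``schematic'' --- the precise Bochner identity for $\del\delbar\sigma$ and the identification of the curvature term as a \emph{nonnegatively weighted average} of $R^N\big(\upd f(X),\upd f(Y),\overline{\upd f(X)},\overline{\upd f(Y)}\big)$ --- carry all the sign bookkeeping; the averaged structure is needed to pass from vanishing of the integrand to the pointwise conclusion \eqref{eq:SiuSampson}, and this is exactly where the paper itself defers to \cite{MR1379330}.
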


In \eqref{eq:SiuSampson}, $R^N$ is the complexification of the Riemann curvature tensor of $N$ to 
$\upT_{\C} N$. By definition, $N$ has nonpositive Hermitian sectional curvature if $R^N (X, Y, \bar{X}, \bar{Y}) \leqslant 0$
for all $X, Y$.

\subsection{Strong curvature conditions} \label{subsec:HermitianSectionalCurvature}

\subsubsection{Hermitian sectional curvature and strongly nonpositive curvature} 

\autoref{thm:SiuSampson} requires the notion of Hermitian sectional curvature. Contrary to what its name could suggest,
this is a purely Riemannian notion.

Let $N$ be a Riemannian manifold. Refer to  \autoref{subsubsec:RiemannianCurvatureTensors} for 
the definitions of the curvature tensors on $N$, such as the Riemann tensor $R$ and the curvature operator $Q$.
We still denote $R$ and $Q$ the complex linear extensions of these operators to complexified vectors, \eg{} 
$Q \colon \medotimes^2 (\upT_{\C} N) \times \medotimes^2 (\upT_{\C} N) \to \C$.

We say that \emph{$N$ has nonpositive Hermitian sectional curvature}
if $R(X, Y, \bar{X}, \bar{Y}) \leqslant 0$
for any $X, Y \in \upT_{\C} N$. Following Siu \cite{MR584075}, we alternatively say that \emph{$N$ has strongly nonpositive curvature}.

Clearly, $N$ has strongly nonpositive curvature if and only if $Q(\sigma, \bar{\sigma}) \leqslant 0$ for any decomposable tensor $\sigma \in \medotimes^2 (\upT_{\C} N)$.
If moreover $Q(\sigma, \bar{\sigma}) \leqslant 0$ on all $\medotimes^2 (\upT_{\C} N)$, we say that \emph{$N$ has very strongly nonpositive curvature}. This is equivalent to 
$Q(\sigma, \sigma) \leqslant 0$ on $\medotimes^2 \upT N$, in other words $Q$ is negative semidefinite on $\medotimes^2 \upT N$.
We have obvious analogous notions of (very) strongly nonnegative curvature.

\begin{example}
 A manifold of strongly nonpositive curvature clearly has nonpositive sectional curvature.
 It is easy to see that the converse holds for a manifold of constant sectional curvature.
 More generally it holds for $\delta$-pinched curvature with $\delta \geqslant 1/4$ by \cite{MR1091617}. This means
 that for all $p\in N$, the sectional curvature at $p$ is between $-c_p$ and $-\delta c_p$ for some constant $c_p > 0$ ($\delta$ is always assumed in $(0,1]$).
\end{example}

\begin{example}
 Of course, one expects that there are Riemannian manifolds with nonpositive yet not strongly nonpositive curvature, and it is claimed in \cite{MR1379330} and \cite{MR1777835}, 
 but I am not aware of an example. That being said, \cite{MR2085169} produces examples of closed Riemannian manifolds of negative sectional curvature which do not admit any Riemannian metric
 of \emph{very} strongly nonpositive curvature.
\end{example}
\begin{example}
 By Jost--Yau \cite{MR690192}, a product of hyperbolic spaces $\H^2 \times \dots \times \H^2$ has strongly nonpositive curvature. I believe that it actually has very strongly nonpositive curvature,
 in fact I do not know an example of manifold with strongly, but not very strongly, nonpositive curvature.
\end{example}
\begin{example}
 Symmetric spaces of noncompact type have very strongly
 nonpositive curvature (see \autoref{sec:SymmetricSpaces}). 
 Similarly, symmetric spaces of compact type have very strongly nonnegative curvature.
\end{example}
\begin{example}
 Y. Wu \cite{MR3189463} showed that the Teichmüller space of a closed surface of genus $>1$ has very strongly nonpositive curvature.
\end{example}

\subsubsection{Strongly negative curvature of K\"ahler manifolds} \label{subsubsec:StronglyNegKahler}

The natural attempt to define very strongly \emph{negative} curvature by $Q(\sigma, \bar{\sigma}) < 0$ 
for all nonzero $\sigma \in \medotimes^2 (\upT_{\C} N)$ is unreasonable. Indeed,
on any Kähler manifold  $N$, it follows from \autoref{prop:CurvatureKahler} that $Q(\sigma, \bar{\sigma}) = 0$ for any $\sigma$ of type $(2,0)$ or $(0,2)$.
By definition, a Kähler manifold $N$ has \emph{strongly negative curvature} if $Q(\sigma, \bar{\sigma}) < 0$ 
for all  nonzero $\sigma \in \upT^{1,0} N \otimes \upT^{0,1} N$ of length $ \leqslant 2$, \eg{} 
$\sigma = X \otimes \bar{Y} + Z \otimes \bar{W}$, and $N$ has \emph{very strongly negative curvature} if $Q(\sigma, \bar{\sigma}) < 0$ 
for all  nonzero $\sigma \in \upT^{1,0} N \otimes \upT^{0,1} N$. 

\begin{example}
 Siu  \cite{MR584075} proved that complex hyperbolic space $\C H^n$ has very strongly negative curvature. $\C P^n$ has very strongly positive curvature. Mostow--Siu \cite{MR592294} provided an example of compact K\"ahler manifold
 with very strongly negative curvature that is not a quotient of $\C H^n$.
\end{example}

\begin{example}
 According to \cite[Theorem 9.26]{MR1777835}, a Kähler manifold of complex dimension $2$ has very strong negative (resp. nonpositive) curvature
 if and only if it has negative (resp. nonpositive) sectional curvature. We expect this to be false in higher dimensions, but we are not aware of a counter-example.
\end{example}

\subsubsection{Non-example: the Penrose twistor fibration}  \label{subsubsec:PenroseFibration}

Let us illustrate with a non-example that the curvature condition is critical for the Siu--Sampson theorem \autoref{thm:SiuSampson}.
Consider the map (called \emph{Penrose twistor fibration}, see \eg{} \cite{MR791300})
\begin{equation}
 \begin{split}
  f \colon \C P^3 = \{\text{$\C$-lines in $\C^4$}\} &~\longrightarrow~ \H P^1 = \{\text{$\H$-lines in $\H^2$}\}
 \end{split}
\end{equation}
defined by $f(l) = L$, where $L$ is the unique $\H$-line in $\H^2 \approx \C^4$ containing $l$. Here $\H$ denotes the $\R$-algebra of quaternions.
Topologically, $\H P^1 \approx S^7 / S^3 \approx S^4$,
and $\H P^1$ has a natural Riemannian (in fact quaternion-Kähler) metric of constant curvature $1$, making the Hopf fibration $S^3 \to S^7 \to S^4$ a Riemannian submersion.
In particular $\H P^1$ does not have nonpositive Hermitian sectional curvature.

We claim that $f$ is harmonic. Indeed, one can check that $f$ is a Riemannian submersion, and it is known since \cite{MR0164306} that a Riemannian submersion
is harmonic if and only if its fibers are minimal submanifolds. Here the fibers
of $f$ are minimal spheres as we have seen in \autoref{ex:MinimalSpheres}.

However $f$ is not pluriharmonic. Indeed, consider a fiber $C$ of $f$ and a transverse $\C$-plane $C' \subseteq \C^4$.
For any $\varepsilon>0$, it is possible to choose $C'$ contained in the $\varepsilon$-neighborhood of $C$ in $\C P^3$ (just tilt $C$ slightly)
and such that $C'$ is not a fiber of $f$. Then the restrition of $f$ to $C'$ is nonconstant and maps $C'$ into the $\varepsilon$-neighborhood a point (namely $f(C)$) in 
$\H P^1 \approx S^4$. Such a map cannot be harmonic by the maximum principle. Since $C'$ is a complex curve in $\CP^3$, $f$ is not pluriharmonic.

For non-experts, let us explain why any harmonic map from a closed manifold $M$ to a Riemannian manifold $N$ whose image is contained in a
small ball must be constant.
For $\varepsilon$ small enough, the function $\varphi = d(p, \cdot)^2$ is convex on $B(p, \varepsilon)$, \ie{} it has positive semidefinite Hessian.
However, harmonic functions pull back convex functions to subharmonic functions. Indeed, one has the composition formula
\begin{equation}
\Delta (\varphi \circ f) = \upd \varphi \circ \Delta(f) + \tr \left((\nabla^2 \varphi)(\upd f, \upd f)\right)
\end{equation}
so that if $\Delta f = 0$, $\Delta (\varphi \circ f) \geqslant 0$ when $\varphi$ is convex. (The converse is true: $f$ is harmonic if it pulls back 
any locally defined convex function to a subharmonic function  \cite[Thm 3.4]{MR545705}.)
This implies that any local maximum of $\varphi \circ f$ cannot be attained at an interior point
of $M$ unless $f$ is constant by the maximum principle for subharmonic functions. By compactness of $M$, $\varphi \circ f$ does attain a global maximum, however $M$ only has interior points (no boundary), so $f$ must be constant.

\subsection{Sampson's Bochner formula} \label{subsec:BochnerKahler}

Siu \cite{MR584075} was the first to find a Bochner formula (also known as \emph{Siu's $\partial \bar{\partial}$-trick}) for maps between Kähler
manifolds and obtain rigidity results. Sampson \cite{MR833809} gave an improvement of Siu's Bochner formula that implies
\autoref{thm:SiuSampson}.

Let us explain Sampson's formula. (The reader may also refer to \cite[Eq. (4.35)]{MR1391729} for a similar exposition.)
First recall that the \emph{divergence} of a tensor field $T \in \Gamma(\upT^* M \otimes \dots)$ is given by
\begin{equation} \label{eq:divergence}
 \operatorname{div} T = \tr \nabla T
 = \sum_{j=1}^m (\nabla_{e_j} T)(e_j, \dots)
\end{equation}
where $(e_j)_{1\leqslant j \leqslant m}$ is any local orthonormal frame field. This definition makes sense for $T \in \Gamma(\upT^* M \otimes E)$
where $E$ is any vector bundle with a connection: take the tensor product connection in \eqref{eq:divergence}.

\begin{remark}
Seeing
$T$ as a $1$-form with values in $E$, \eqref{eq:ProofDeltaNablaDf2} shows that $\upd_\nabla^* T = -\operatorname{div}(T)$.
\end{remark}

Let $f \colon (M,g_M) \to (N, g_N)$ be a smooth map between Riemannian manifolds. Define the first fundamental form
$\theta \coloneqq f^* g_N$. Note that by definition, 
$\Vert \upd f \Vert^2 = \tr \theta$. Take the divergence of $\theta$:
\begin{equation}
 \operatorname{div}(\theta) = \langle \Delta f, \upd f(\cdot) \rangle + \sum_{j=1}^m \langle \upd f(e_j), (\nabla_{e_j} \upd f) (\cdot)\rangle\,.
\end{equation}
This is an element of $\Gamma(\upT^* M)$. Take the divergence again and use the Weitzenböck formula:
\begin{equation} \label{eq:BochnerDivergence}
\begin{split}
 \operatorname{div}^2(\theta) &= 2 \langle \nabla  \Delta f, \nabla f \rangle + \Vert \Delta f \Vert^2 + \Vert \nabla^2 f \Vert^2 \\
 &~ + \Ric^M(\langle \nabla f, \nabla f \rangle_N) -  R^N(\langle \nabla f \wedge \nabla f \rangle_M, \langle \nabla f \wedge \nabla f \rangle_M)\,.
 \end{split}
\end{equation}
This is merely a rewriting of the Bochner formula \eqref{eq:BochnerRiemannian2}, since $\operatorname{div} \theta - \frac{1}{2} \upd \Vert \upd f \Vert^2  = \langle \upd f, \nabla  \Delta f \rangle$.

Sampson's computation is based on the observation that when $(M, J)$ is a complex manifold, given a bilinear form
$B \in \Gamma(\upT^* M \otimes \upT^* M \otimes \dots)$, as an alternative to the trace $\tr B$, one can define
$\tr^c(B) \coloneqq \sum_{i=1}^M B(\bar{\varepsilon_j}, \varepsilon_j)$
where $\varepsilon_j = {e_j}^{1,0}$. It is immediate to check that $\tr^c B = \frac{1}{4}\left[ \tr B + i \tr \left(B(J \cdot, \cdot)^{1,1}\right)\right]$
and that if $B$ is symmetric then $\tr^c(B) = \frac{1}{4} \tr B$. In particular one can define $\operatorname{div}^c(T) = \tr^c(\nabla T)$.

Reproducing the calculations leading to \eqref{eq:BochnerDivergence} by computing ${(\operatorname{div}^c)}^2(\theta)$ instead of $\operatorname{div}^2(\theta)$,
one finds:
\begin{equation} \label{eq:Firstdivc}
\operatorname{div}^c \theta = \langle \tr^c(\nabla^2 f) , \upd f(\cdot) \rangle
+ \langle \upd f(\varepsilon_l), (\nabla_{\bar{\varepsilon_l}} \upd f) (\cdot) \rangle\,.
\end{equation}
Be advised that in \eqref{eq:Firstdivc} we omit the summation symbol over the repeated index $l$; we maintain this convention
in the remainder of this section. Since $\nabla \upd f$ is symmetric (\autoref{prop:HessianSymmetric}), 
$\tr^c(\nabla^2 f) = \frac{1}{4} \tr (\nabla^2 f) = \frac{1}{4} \Delta f$. In particular, if $f$ is harmonic, \eqref{eq:Firstdivc}
reduces to 
\begin{equation} \label{eq:FirstdivcHarmonic}
\operatorname{div}^c \theta = \langle \upd f(\varepsilon_l), (\nabla_{\bar{\varepsilon_l}} \upd f) (\cdot) \rangle\,.
\end{equation}
Taking $\operatorname{div}^c$ again and using the Weitzenböck formula \eqref{eq:WeitzenbockFormula}, one finds:
 \begin{equation} \label{eq:BochnerSampson0}
 \begin{split}
 {(\operatorname{div}^c)}^2~\theta &= \langle \nabla_{\bar{\varepsilon_j}}\upd f (\varepsilon_l), \nabla_{\bar{\varepsilon_l}} \upd f(\varepsilon_j) \rangle
 + \langle \upd f(R^M(\bar{\varepsilon_j}, \bar{\varepsilon_l}) \varepsilon_j), \upd f(\varepsilon_l) \rangle\\
 & \quad -  R^N(\upd f(\bar{\varepsilon_j}), \upd f(\bar{\varepsilon_l}), \upd f(\varepsilon_j), \upd f(\varepsilon_l))\,.
  \end{split}
 \end{equation}
If $M$ is K\"ahler, then the curvature term $\langle \upd f(R^M(\bar{\varepsilon_j}, \bar{\varepsilon_l}) \varepsilon_j), \upd f(\varepsilon_l) \rangle$ vanishes. 
Indeed, this term is real (unchanged by complex conjugation by symmetry of the Riemann curvature tensor) 
and the Ricci curvature tensor of $M$ is of type $(1,1)$ (\autoref{prop:CurvatureKahler}). Sampson's Bochner formula follows:

\begin{proposition}[Sampson's Bochner formula for harmonic maps]
 Let $f \colon M \to N$ be a smooth harmonic map where $M$ is a Kähler manifold and $N$ is a Riemannian manifold. Then
 \begin{equation} \label{eq:BochnerSampson}
 {(\operatorname{div}^c)}^2~\theta = \langle \nabla_{\bar{\varepsilon_j}}\upd f (\varepsilon_l), \nabla_{\bar{\varepsilon_l}} \upd f(\varepsilon_j) \rangle
 -  R^N(\upd f(\bar{\varepsilon_j}), \upd f(\bar{\varepsilon_l}), \upd f(\varepsilon_j), \upd f(\varepsilon_l))\,.
 \end{equation}
\end{proposition}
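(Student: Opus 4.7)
The proof is essentially outlined in the paragraphs leading up to the statement, so my plan would be to follow exactly that roadmap and make the computation explicit.

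First I would start from equation \eqref{eq:FirstdivcHarmonic}, which already exploits harmonicity together with the symmetry of the Hessian (\autoref{prop:HessianSymmetric}) to give
\begin{equation}
\operatorname{div}^c \theta = \langle \upd f(\varepsilon_l), (\nabla_{\bar{\varepsilon_l}} \upd f) (\cdot) \rangle\,.
\end{equation}
Apply $\operatorname{div}^c$ once more using the product rule and the compatibility of $\nabla$ with $g_N$. At a point $x$, I may assume the local orthonormal frame $(e_j)$ is chosen so that $\nabla e_j = 0$ at $x$ (\ie{} parallel at the point), hence $\nabla \varepsilon_j = 0$ at $x$ too. The differentiation then produces two terms,
\begin{equation}
 {(\operatorname{div}^c)}^2~\theta = \langle \nabla_{\bar{\varepsilon_j}}\upd f (\varepsilon_l), \nabla_{\bar{\varepsilon_l}} \upd f(\varepsilon_j) \rangle + \langle \upd f(\varepsilon_l), \nabla_{\bar{\varepsilon_j}}\nabla_{\bar{\varepsilon_l}} \upd f(\varepsilon_j) \rangle\,.
\end{equation}

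Next, I would rewrite the second term using the commutation of covariant derivatives. Applying the definition of the curvature of the bundle $\upT^*M \otimes f^*\upT N$ as in the paragraph on $\Omega^k(M,E)$-curvatures (\autoref{subsubsec:CurvatureTensorsOmega}), one expresses $\nabla_{\bar{\varepsilon_j}}\nabla_{\bar{\varepsilon_l}} \upd f(\varepsilon_j)$ as a symmetric expression plus curvature terms coming from both $R^M$ and $F^\nabla = f^*R^N$. Recalling that $\nabla^2 f$ is symmetric (\autoref{prop:HessianSymmetric}) and that $\upd_\nabla \upd f = 0$ (\autoref{prop:dfclosed}), and using once more the harmonicity $\sum_l\nabla_{e_l}\upd f(e_l) = \Delta f = 0$ to kill the ``symmetric'' contribution (after reindexing and conjugating), one arrives at equation \eqref{eq:BochnerSampson0}: the remaining terms are precisely the two curvature contributions, with $R^M$ acting through $\langle \upd f(R^M(\bar{\varepsilon_j}, \bar{\varepsilon_l}) \varepsilon_j), \upd f(\varepsilon_l) \rangle$ and $R^N$ through $R^N(\upd f(\bar{\varepsilon_j}), \upd f(\bar{\varepsilon_l}), \upd f(\varepsilon_j), \upd f(\varepsilon_l))$.

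Finally, I would dispose of the $M$-curvature term using the Kähler hypothesis. The tensor $\langle \upd f(R^M(\bar{\varepsilon_j}, \bar{\varepsilon_l}) \varepsilon_j), \upd f(\varepsilon_l) \rangle$ is invariant under complex conjugation by the usual symmetries of $R^M$ (namely $\overline{R^M(\bar X,\bar Y)\bar Z} = R^M(X,Y)Z$ and pair-interchange), so it is real. On the other hand, the contraction $R^M(\bar{\varepsilon_j}, \cdot, \varepsilon_j, \cdot)$ is essentially the Ricci tensor evaluated on vectors of types $(0,1)$ and $(1,0)$; since $M$ is Kähler, $\Ric^M$ is of type $(1,1)$ by \autoref{prop:CurvatureKahler}, so components with both slots in $\upT^{1,0}M$ or both in $\upT^{0,1}M$ vanish, forcing the whole term to be purely imaginary. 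Combined with being real, it must vanish.

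The main obstacle I anticipate is bookkeeping: tracking index positions, types, and the correct signs when commuting $\nabla_{\bar{\varepsilon_j}}$ with $\nabla_{\bar{\varepsilon_l}}$ and reducing the ``symmetric in $j,l$'' piece to a trace that matches $\tr^c \nabla^2 f = \frac{1}{4}\Delta f = 0$. The conceptual steps are all done above; the computation just needs to be executed carefully, which is why Sampson's formula is more transparent stated in Kähler language than derived in coordinates.
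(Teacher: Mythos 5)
Your proposal is correct and follows essentially the same route as the paper: the text preceding the proposition is the proof, namely computing $\operatorname{div}^c$ twice, using harmonicity and the symmetry of $\nabla^2 f$ to kill the $\tr^c(\nabla^2 f) = \frac{1}{4}\Delta f$ contributions, commuting covariant derivatives (the Weitzenböck formula) to produce the two curvature terms of \eqref{eq:BochnerSampson0}, and then discarding the $R^M$ term via reality plus the $(1,1)$-type of the curvature of a Kähler manifold. If anything, the last step can be made even more direct than either your version or the paper's: by \autoref{prop:CurvatureKahler} the Riemann tensor itself satisfies $R^M(Ju,Jv)=R^M(u,v)$, so $R^M(\bar{\varepsilon_j},\bar{\varepsilon_l})=0$ identically and the $M$-curvature term vanishes summand by summand.
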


It is remarkable that this Bochner formula does not involve the curvature of $M$.

\begin{remark}
It is easy to generalize Sampson's Bochner formula to any smooth map without the harmonicity condition by keeping track of the extra terms:
\begin{equation} \label{eq:BochnerSampsonGeneral}
\begin{split}
 {(\operatorname{div}^c)}^2~\theta &= 2 \langle \nabla_{\bar{\varepsilon_j}} (\nabla_{\bar{\varepsilon_l}} \upd f (\varepsilon_l)), \upd f(\varepsilon_j) \rangle
 + \langle \nabla_{\bar{\varepsilon_l}}\upd f (\varepsilon_l), \nabla_{\bar{\varepsilon_j}} \upd f(\varepsilon_j) \rangle\\
 & \quad + \langle \nabla_{\bar{\varepsilon_j}}\upd f (\varepsilon_l), \nabla_{\bar{\varepsilon_l}} \upd f(\varepsilon_j) \rangle
 -  R^N(\upd f(\bar{\varepsilon_j}), \upd f(\bar{\varepsilon_l}), \upd f(\varepsilon_j), \upd f(\varepsilon_l))\,.
 \end{split}
 \end{equation}
\end{remark}

Ohnita-Valli \cite{MR1069515} give a neat variant of Sampson's Bochner formula \eqref{eq:BochnerSampsonGeneral},
although they do not give any details for the proof. Define the \emph{energy form} of $f \colon M \to N$ by
$\varepsilon(f) \coloneqq \left(f^*g_N(J \cdot, \cdot)\right)^{(1,1)}$. This is a finer version of the energy density of $f$: the two are related by
$e(f) \vol_M = \varepsilon(f) \wedge \frac{\omega^{n-1}}{(n-1)!}$.

\begin{proposition}[{\cite[Eq. (1.3)]{MR1069515}}]
 Let $f \colon M \to N$ be a smooth map where $M$ is a Kähler manifold of complex dimension $n$ and $N$ is a Riemannian manifold. Then
 \begin{equation} \label{eq:BochnerOhnitaValli}
 \begin{split}
 i \partial \bar{\partial} \varepsilon(f) \wedge \frac{\omega^{n-2}}{(n-2)!} &= 
 \Big[\Vert \nabla^{0,1} \upd^{1,0} f\Vert^2 - \Vert \tr (\nabla^{0,1} \upd^{1,0} f) \Vert^2\\
 &\quad -  R^N(\upd f(\varepsilon_j), \upd f(\varepsilon_l), \upd f(\bar{\varepsilon_j}), \upd f(\bar{\varepsilon_l}))\Big] \vol_M\,.
\end{split}
 \end{equation}
\end{proposition}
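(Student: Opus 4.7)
My plan is to reduce the identity to a pointwise computation in a local unitary frame. Both sides are smooth top-degree forms on $M$, so it suffices to verify equality at an arbitrary point $p \in M$; I would pick a unitary frame $(\varepsilon_j)$ that is normal at $p$, \ie{} $\nabla^M \varepsilon_j|_p = 0$, so that all connection one-forms drop out at $p$.

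First, I would rewrite the energy form as a mixed product in the sense of \autoref{def:battery} applied to $E = f^*\upT N$ with the pullback metric from $g_N$:
\begin{equation}
\varepsilon(f) = i \langle \upd^{1,0} f \wedge \upd^{0,1} f \rangle.
\end{equation}
This follows from the definition of the $(1,1)$-projection together with $\upd f(\varepsilon_j) = \upd^{1,0}f(\varepsilon_j)$ and $\upd f(\bar{\varepsilon_k}) = \upd^{0,1}f(\bar{\varepsilon_k})$ (the factor $i$ ensures reality). Next, I would apply $i\partial\bar{\partial}$ using the Leibniz rule for $\del_\nabla$ and $\delbar_\nabla$ on mixed products. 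The resulting $(2,2)$-form splits into a \emph{Hessian piece}, quadratic in $\delbar_\nabla \upd^{1,0}f$ and $\del_\nabla \upd^{0,1}f$, and a \emph{curvature piece} coming from the identity $\upd_\nabla^2 = F^\nabla \wedge \cdot$ with $F^\nabla = f^*R^N$. Pluriharmonicity is \emph{not} assumed, but the pure $(2,0)$ and $(0,2)$ components of $\bar{\nabla}\upd f$ vanish because $\upd_\nabla \upd f = 0$ (\autoref{prop:dfclosed}), which simplifies the expansion considerably.

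Finally, I would wedge with $\omega^{n-2}/(n-2)!$. For a $(2,2)$-form $\beta$ on a K\"ahler $n$-fold, this operation is a classical double Lefschetz trace: in a unitary frame it reduces to a combination of the two shapes $\beta(\varepsilon_j, \bar{\varepsilon_j}, \varepsilon_l, \bar{\varepsilon_l})$ and $\beta(\varepsilon_j, \bar{\varepsilon_l}, \varepsilon_l, \bar{\varepsilon_j})$ summed over $j, l$, times $\vol_M$. This identity is the higher-degree analogue of \autoref{prop:FundamentalForm} \ref{item:propFundamentalFormii} and is proven by the same elementary linear algebra in an adapted basis. Applied to the Hessian piece, the two combinations produce precisely $\Vert \nabla^{0,1}\upd^{1,0}f\Vert^2$ and $\Vert \tr(\nabla^{0,1}\upd^{1,0}f)\Vert^2$; applied to the curvature piece, they produce the $R^N$-term in \eqref{eq:BochnerOhnitaValli}.

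The main obstacle will be the final step: tracking signs, the factor $i$, and correctly matching each double-trace combination to its corresponding squared norm. A useful sanity check is Sampson's general Bochner formula \eqref{eq:BochnerSampsonGeneral}: since wedging with $\omega^{n-2}/(n-2)!$ is morally the pointwise counterpart of the trace operation underlying $(\operatorname{div}^c)^2$, the Ohnita--Valli identity should reduce to a pointwise repackaging of Sampson's, with $\Vert \tr(\nabla^{0,1}\upd^{1,0}f)\Vert^2$ accounting for the discrepancy between $\Vert \nabla^{0,1}\upd^{1,0}f\Vert^2$ and Sampson's cross term $\langle \nabla_{\bar{\varepsilon_j}}\upd f(\varepsilon_l), \nabla_{\bar{\varepsilon_l}}\upd f(\varepsilon_j)\rangle$.
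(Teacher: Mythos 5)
The paper does not actually prove this proposition: it is quoted from Ohnita--Valli with the explicit caveat that even they ``do not give any details for the proof.'' The closest thing to a proof in the text is the remark following Toledo's proof of \autoref{thm:SiuSampson}, which notes that $\varepsilon(f) = -\tfrac{1}{2}\langle \upd f \wedge \updc f\rangle$, that $\eta = -4i\partial\bar{\partial}\varepsilon(f)\wedge\omega^{n-2}$ for Toledo's exact form $\eta$ of \eqref{eq:ToledoIdentity}, and that one concludes by computing the curvature term via $\upd_\nabla^2$ and the quadratic term via the explicit identity $\alpha\wedge\alpha\wedge\frac{\omega^{n-2}}{(n-2)!} = (\tr\alpha)^2 - \Vert\alpha\Vert^2$. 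Your plan is essentially that same computation, reorganized in terms of $\upd^{1,0}f$ and $\upd^{0,1}f$ rather than $\updc f$, with the $(1,1)$-form product identity replaced by the equivalent double-trace formula for $(2,2)$-forms; it is sound, and it correctly isolates $F^\nabla = f^*R^N$ as the only curvature that enters, since $\upd_\nabla$ does not see the metric of $M$. Two cautions. First, the assertion that ``the pure $(2,0)$ and $(0,2)$ components of $\bar{\nabla}\upd f$ vanish'' is not literally true: the Hessian is symmetric, so its $(2,0)$ part is symmetric but generally nonzero; what vanishes are the $(2,0)$ and $(0,2)$ components of the antisymmetrization $\upd_\nabla \upd f$, which is what actually occurs in the Leibniz expansion of $\partial\bar{\partial}$ of the mixed product, so the simplification does go through once phrased correctly. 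Second, the sanity check against Sampson's formula \eqref{eq:BochnerSampsonGeneral} is only heuristic: the two left-hand sides differ by divergence terms, and the quadratic pieces are genuinely different contractions ($\langle \nabla_{\bar{\varepsilon_j}}\upd f(\varepsilon_l), \nabla_{\bar{\varepsilon_l}}\upd f(\varepsilon_j)\rangle$ versus $\Vert \nabla_{\bar{\varepsilon_j}}\upd f(\varepsilon_l)\Vert^2$), so you should not expect a term-by-term match.
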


\subsection{Proof of the Siu--Sampson theorem}

\subsubsection{Using Sampson's Bochner formula}

Just like Eells--Sampson's rigidity \autoref{thm:EellsSampsonRigidity}, the Siu--Sampson  \autoref{thm:SiuSampson} is simply obtained by integrating the Bochner formula 
over $M$, arguing that the left-hand side has zero integral, and that all terms on the right-hand side are pointwise $\geqslant 0$
and hence must vanish everywhere.

However Sampson \cite{MR833809} (also \cite{MR1019964} and \cite[Eq. (4.35)]{MR1391729}) concludes perhaps too quickly 
from the Bochner formula \eqref{eq:BochnerSampson}: it is not clear (to me!) that ${(\operatorname{div}^c)}^2~\theta$ is co-closed, or even
real.

That being said, \autoref{thm:SiuSampson} does follow immediately from the Ohnita--Valli version of the Bochner formula \eqref{eq:BochnerOhnitaValli}.
Indeed, observe that:
 \begin{itemize}
  \item It is always the case that $i \partial \bar{\partial} = \frac{1}{2} \upd \updc$ on $\C$-valued differential forms. In particular, $i \partial \bar{\partial} \varepsilon(f)$ is closed,
  and so is $i \partial \bar{\partial} \varepsilon(f) \wedge \frac{\omega^{n-2}}{(n-2)!}$ since $\upd \omega = 0$ (\ie{} $M$ is Kähler).
  \item Recall that $\upd_\nabla \updc f = 2i \delbar_\nabla \upd^{1,0} f$, in particular $\Vert \nabla^{0,1} \upd^{1,0} f\Vert^2 = 0$ 
  if and only if $f$ is pluriharmonic.
  \item Since $\nabla^2 f$ is symmetric, $\tr (\nabla^{0,1} \upd^{1,0} f) = \tr^c (\nabla^2 f) = \frac{1}{4} \tr (\nabla^2 f) = \frac{1}{4} \Delta f$.
  Therefore we have $\Vert \tr (\nabla^{0,1} \upd^{1,0} f) \Vert^2 = 0$ if and only if $f$ is harmonic.
 \end{itemize}

\subsubsection{Toledo's proof} \label{subsubsec:Toledo}

An elegant alternative proof was written by D. Toledo in Chapter 6 of \cite{MR1379330}. Let us briefly explain this argument which does not require a Bochner formula beforehand.

\begin{proof}[Proof of \autoref{thm:SiuSampson}]

First assume that $N = \R$. Let $M$ be a K\"ahler manifold and let $f \colon M \to \R$ be harmonic, 
\ie{} $\upd \updc f \wedge \omega^{n-1} = 0$ by \autoref{prop:DeltaHermitian}. We want to show that $f$ is pluriharmonic \ie{}
$\upd \updc f = 0$ (\autoref{prop:CharacterizationsPluriharmonic}). Consider the form $\eta \in  \Omega^{2n}(M, \R)$ where $n = \dim_{\C} M$ defined by
\begin{equation}
 \eta = \upd \updc f \wedge \upd \updc f \wedge \omega^{n-2}\,.
\end{equation}
By the linear algebra \autoref{lem:HodgeBilinear} whose proof we postpone, $\eta$ is pointwise $\leqslant 0$ with equality if and only if $\upd \updc f = 0$.
On the other hand $\upd \eta = 0$ since $\eta = \upd \left(\updc f \wedge \upd \updc f \wedge \omega^{n-2}\right)$ (recall that 
$\upd \omega = 0$ since $M$ is K\"ahler), so $\int_M \eta = 0$ by Stokes. We conclude that $\eta$ must vanish everywhere, and so must $\upd \updc f$.

For the general case, define $\eta \in  \Omega^{2n}(M, \R)$ by
\begin{equation} \label{eq:ToledoIdentity}
\begin{split}
 \eta &= \upd \left( \langle \upd_\nabla \updc f \wedge \updc f \rangle \wedge \omega^{n-2}\right)\\
 &= \langle \upd_\nabla^2 \updc f \wedge \updc f \rangle \wedge \omega^{n-2} + \langle \upd_\nabla \updc f \wedge \upd_\nabla \updc f \rangle \wedge \omega^{n-2}\,.
\end{split}
\end{equation}
By definition $\eta$ is exact, so $\int_M \eta = 0$. A straightforward extension of the linear algebra \autoref{lem:HodgeBilinear} shows that the term
$\langle \upd_\nabla \updc f \wedge \upd_\nabla \updc f \rangle \wedge \omega^{n-2}$ is pointwise $\leqslant 0$, with equality if and only if $\upd_\nabla \upd^c f = 0$.

It remains to discuss the curvature term $\langle \upd_\nabla^2 \updc f \wedge \updc f \rangle \wedge \omega^{n-2}$.
Let us skip some details and cite Toledo \cite{MR1748612}:
\emph{When rewritten using the definition of curvature $\upd_\nabla^2 = -R$, this term
turns out to be the average value of $R^N(\upd f(X), \upd f(Y), \upd f(\bar{X}),\upd f(\bar{Y}))$ over all unit
length decomposable vectors $X \wedge Y \in \otimes^2 \upT^{1,0} M$. This computation can be found in \cite{MR1379330}
 or in equivalent forms in \cite{MR584075, MR833809}.} 
 \end{proof}
 
\begin{remark}
I find that even in the detailed computation of \cite{MR1379330}, important arguments are overlooked: 
when Toledo writes $\upd_\nabla^2 = -R$, the curvature tensor $R$ should not just be the Riemann curvature tensor of $N$,
but also involve the curvature of $M$.
As explained in \autoref{subsec:HermitianSectionalCurvature}, the fact that $R^M$ ends up not playing a role is a little miracle
due to the fact that the Ricci tensor of a Kähler manifold is $(1,1)$.
\end{remark}

To complete the proof we need the following linear algebra lemma, which can be seen as a special case of the \emph{Hodge-Riemann bilinear relations},
a well-known result in K\"ahler geometry that leads to the Hodge index theorem (see \eg{} \cite{VoisinHodge}).
\begin{lemma} \label{lem:HodgeBilinear}
 Let $(V, g, J, \omega)$ be a Hermitian vector space of complex dimension $n$.
 If $\alpha \in \Lambda^{1,1} V^*$ is a $(1,1)$-form such that $\alpha \wedge \omega^{n-1} = 0$,
 then $\alpha \wedge \alpha \wedge \omega^{n-2} \leqslant 0$, with equality if and only if $\alpha = 0$.
\end{lemma}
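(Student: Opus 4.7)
The plan is to reduce to a simple diagonalization, since this is a purely linear-algebraic statement on $V$ about a real $(1,1)$-form (the inequality $\leqslant 0$ presupposes that $\alpha\wedge\alpha\wedge\omega^{n-2}$ is a real multiple of $\vol_V$, which holds as soon as $\alpha$ is real, as is the case in every application of the lemma). First I would choose a unitary basis $(e_1,\dots,e_n)$ of $(V,h)$ in which $\alpha$ is diagonal: in the corresponding dual basis, $\alpha = i\sum_j \lambda_j\, e^j\wedge \bar{e}^j$ with real scalars $\lambda_j$, while simultaneously $\omega = i\sum_k e^k\wedge \bar{e}^k$ (up to a common normalization that does not affect the sign). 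Such a basis exists by the spectral theorem applied to the Hermitian matrix representing $\alpha$ in any unitary frame.

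To streamline the sign bookkeeping, I would introduce the real $2$-forms $\omega_j \coloneqq \frac{i}{2} e^j\wedge\bar{e}^j$. They mutually commute under wedge, satisfy $\omega_j\wedge\omega_j=0$, and with an appropriate normalization one has $\omega_1\wedge\dots\wedge\omega_n = \vol_V$, $\omega = \sum_j \omega_j$, and $\alpha = 2\sum_j \lambda_j\,\omega_j$. With this notation everything stays manifestly real and positive.

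Next I would expand $\omega^{n-1} = (n-1)!\sum_{|S|=n-1} \prod_{k\in S}\omega_k$ and wedge with $\alpha$: only the term where $S = \{1,\dots,n\}\setminus\{j\}$ contributes to $\lambda_j\omega_j$, giving $\alpha\wedge\omega^{n-1} = 2(n-1)!\bigl(\sum_j \lambda_j\bigr)\vol_V$. Thus the primitivity hypothesis translates into the trace condition $\sum_j \lambda_j = 0$. A similar expansion of $\omega^{n-2}$ yields $\alpha\wedge\alpha\wedge\omega^{n-2} = 4(n-2)!\bigl(\sum_{j\neq k}\lambda_j\lambda_k\bigr)\vol_V$. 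Combining with the algebraic identity $\bigl(\sum_j \lambda_j\bigr)^2 = \sum_j \lambda_j^2 + \sum_{j\neq k}\lambda_j\lambda_k$ and the primitivity just established, one gets $\sum_{j\neq k}\lambda_j\lambda_k = -\sum_j \lambda_j^2$, hence $\alpha\wedge\alpha\wedge\omega^{n-2} = -4(n-2)!\bigl(\sum_j \lambda_j^2\bigr)\vol_V \leqslant 0$, with equality iff every $\lambda_j$ vanishes, i.e.\ iff $\alpha = 0$.

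There is no serious obstacle here; the only subtlety is the sign bookkeeping (factors of $i$ and signs from reordering wedges), which the auxiliary $\omega_j$ trick handles cleanly. A useful sanity check is the case $n=2$, where $\omega^{n-2}=1$ and the statement degenerates to the obvious fact that a traceless real diagonal $(1,1)$-form satisfies $\alpha\wedge\alpha\leqslant 0$.
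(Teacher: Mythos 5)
Your proof is correct, but it takes a genuinely different route from the paper's. The paper identifies $\Lambda^{1,1}V^*$ with the Hermitian endomorphisms $\cH(V)$ via $f \mapsto \omega_f$, decomposes $\cH(V) = \R\,\mathit{id} \oplus \cH^0$ into irreducible $\U(V)$-modules (using simplicity of $\mathfrak{su}(V)$), observes that the pairing $(\alpha,\beta)\mapsto \alpha\wedge\beta\wedge\omega^{n-2}$ is $\U(V)$-invariant and hence definite or zero on the irreducible primitive piece $\Lambda_0^{1,1}V^*$, and finally computes a single example to fix the sign. You instead diagonalize $\alpha$ directly in a unitary frame via the spectral theorem and compute everything explicitly with the auxiliary forms $\omega_j$; the trace identity $\bigl(\sum_j\lambda_j\bigr)^2 = \sum_j\lambda_j^2 + \sum_{j\neq k}\lambda_j\lambda_k$ does the rest. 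Your computation in fact establishes the stronger explicit identity $\alpha\wedge\alpha\wedge\frac{\omega^{n-2}}{(n-2)!} = (\tr\alpha)^2 - \Vert\alpha\Vert^2$, which the paper only states in a remark after the proof as provable ``by brute force'' --- so your argument is precisely that brute-force route, carried out cleanly. What the paper's approach buys is conceptual context (the lemma is exhibited as an instance of the Hodge--Riemann bilinear relations, and the invariant-theoretic argument generalizes to other primitive pieces); what yours buys is self-containedness, an explicit formula, and no appeal to the simplicity of $\mathfrak{su}(V)$. One small point to keep in mind: you correctly flag that you treat $\alpha$ as a real form, which is consistent with the paper's convention that $\Lambda^{1,1}V^*$ consists of real skew-symmetric forms satisfying $\alpha(Jx,Jy)=\alpha(x,y)$, and this is also what is needed to make sense of the inequality; your caveat is therefore appropriate rather than a gap.
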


For entertainment, we give a neat proof (mostly taken from \cite{MR1379330}) of this not-so-easy lemma. 
\begin{proof}
Define the Hermitian metric $h \coloneqq g - i\omega$, the space of complex endomorphisms
$\End_{\C}(V) \coloneqq \{f\in \End_{\R}(V) ~|~ fJ = Jf\}$, the space of self-adjoint endomorphisms
$\cH(V) \coloneqq \{f\in \End_{\C}(V) ~|~ h(f(x), y) = h(x, f(y))\}$, and the unitary group $\U(V) \coloneqq \{u\in \End_{\C}(V) ~|~ h(u(x), u(y)) = h(x, y)\}$.

An element $\alpha \in \Lambda^{1,1} V^*$ is a skew-symmetric bilinear form such that $\alpha(Jx, Jy) = \alpha(x,y)$. Let
\begin{equation} \label{eq:LinearAlgLemmaIso}
\begin{split}
 \cH(V) &\to \Lambda^{1,1} V^*\\
 f &\mapsto \omega_f
\end{split}
\end{equation}
where $\omega_f(x,y) \coloneqq \omega(fx, y)$. This isomorphism is moreover $\U(V)$-equivariant for the natural action of $\U(V)$ on $\cH(V)$ by conjugation
and on $\Lambda^{1,1} V^*$ by pullback: $(u \cdot \alpha)(x,y) \coloneqq \alpha(ux, uy)$. 

We claim that the decomposition of $\cH(V)$ as a sum of irreducible
$\U(V)$-modules is $\cH = \R \mathit{id} + \cH^0$, where $\cH^0$ is the space of traceless self-adjoint endomorphisms.
Indeed, consider the Cartan decomposition $\mathfrak{sl}(V) = \mathfrak{su}(h) \oplus \mathfrak{p}$,
where $\mathfrak{su}(V)$ is the Lie algebra of $\SU(V)$ and $\mathfrak{p} = \cH^0$. If there existed a nonzero proper $\U(h)$-invariant subspace 
$\mathfrak{m} \subseteq \mathfrak{p}$, then $\mathfrak{u} + \mathfrak{m}$ would be a nonzero proper ideal of the Lie algebra $\mathfrak{su}(V)$, contradicting its well-known simplicity.

Via the $\U(h)$-equivariant isomorphism \eqref{eq:LinearAlgLemmaIso}, we get a decomposition of $\Lambda^{1,1} V^*$ into 
irreducible $\U(h)$-modules as
$ \Lambda^{1,1} V^* = \R \omega \oplus \Lambda_0^{1,1} V^*$
where $\alpha \in \Lambda_0^{1,1} V^*$ if and only if $\alpha \wedge \omega^{n-1} = 0$. 

To conclude, consider the inner product
\begin{equation} \label{eq:HRInnerProduct}
\begin{split}
 \Lambda^{1,1} V^* \times \Lambda^{1,1} V^* &\to \Lambda^{2n} V^* \approx \R\\
 (\alpha, \beta ) & \mapsto \alpha \wedge \beta \wedge \omega^{n-2}\,.
 \end{split}
\end{equation}
This inner product is $\U(V)$-invariant, therefore in restriction to the irreducible subspace $\Lambda_0^{1,1} V^*$ it must be
either positive definite or negative definite or identically zero. Compute any example to conclude.
\end{proof}

\begin{remark}
 Toledo's proof is very closely related to the Ohnita-Valli version of Sampson's Bochner formula \eqref{eq:BochnerOhnitaValli}. Indeed, it is easy to check that
 $\varepsilon(f) = -\frac{1}{2} \langle \upd f \wedge \updc f \rangle$ and derive that $\eta = -4i \partial \bar{\partial} \varepsilon(f) \wedge \omega^{n-2}$.
One can therefore equate Toledo's identity \eqref{eq:ToledoIdentity} with the Ohnita--Valli formula \eqref{eq:BochnerOhnitaValli}
as long as one can equate the curvature terms, which is done in \cite{MR1379330}, and upgrade
\autoref{lem:HodgeBilinear}: the inner product \eqref{eq:HRInnerProduct} is actually given explicitly by
\begin{equation}
 \alpha \wedge \alpha \wedge \frac{\omega^{n-2}}{(n-2)!} = (\tr \alpha)^2 - \Vert \alpha \Vert^2\,.
\end{equation}
This identity can be proved by brute force: take
an orthonormal basis $(e_1, J e_1, \dots)$ of $V$, etc.
\end{remark}

\subsection{First applications to rigidity}
\label{subsec:FirstRigidity}

This first straightforward consequence of \autoref{thm:SiuSampson} is due to Sampson \cite{MR833809}:
\begin{theorem} \label{thm:SampsonHn}
Let $f \colon M \to N$ where $M$ is compact Kähler and $N$ has constant negative sectional curvature. If $f$
is harmonic, then the rank of $f$ is everywhere $\leqslant 2$.
\end{theorem}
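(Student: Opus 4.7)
The plan is to read off the conclusion directly from the Siu--Sampson theorem (\autoref{thm:SiuSampson}) specialized to constant negative sectional curvature. The theorem already gives two pieces of information: $f$ is pluriharmonic, and the curvature identity $R^{N}(X,Y,\bar X,\bar Y) = 0$ holds for every pair $X,Y \in \upd f(\upT_x^{1,0} M)$. Only the second piece is needed here; the strategy is to compute what this identity forces when $R^{N}$ has constant sectional curvature.

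First I would write down the curvature tensor of a space of constant sectional curvature $-c < 0$. With the paper's sign conventions, the purely covariant Riemann tensor is
\begin{equation}
 R^N(A,B,C,D) = c\bigl(g_N(A,C)\,g_N(B,D) - g_N(A,D)\,g_N(B,C)\bigr),
\end{equation}
and I extend $g_N$ complex bilinearly to $\upT_\C N$. Substituting $(C,D) = (\bar X,\bar Y)$ and letting $h(X,Y) := g_N(X,\bar Y)$, the Siu--Sampson constraint becomes
\begin{equation}
 0 = R^N(X,Y,\bar X,\bar Y) = c\bigl(h(X,X)\,h(Y,Y) - \lvert h(X,Y)\rvert^2\bigr)
\end{equation}
for all $X,Y \in \upd f(\upT_x^{1,0} M)$. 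The key observation is that $h$ is precisely the positive definite Hermitian form on $\upT_\C N$ induced by $g_N$, so this is the equality case of Cauchy--Schwarz.

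Therefore any two vectors in the complex subspace $\upd f(\upT_x^{1,0} M) \subseteq \upT_{f(x),\C} N$ are linearly dependent, i.e.\ $\dim_{\C} \upd f(\upT_x^{1,0} M) \leqslant 1$. Applying complex conjugation in $\upT_{f(x),\C} N$, the same bound holds for $\upd f(\upT_x^{0,1} M) = \overline{\upd f(\upT_x^{1,0} M)}$. Since $\upT_{x,\C} M = \upT_x^{1,0} M \oplus \upT_x^{0,1} M$, the image $\upd f(\upT_{x,\C} M)$ has complex dimension at most $2$, hence $\upd f(\upT_x M)$ has real dimension at most $2$, which is exactly the claimed bound on the rank.

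There is no real obstacle in this proof beyond bookkeeping: the only delicate point is making sure that the sign conventions line up so that $h(X,Y) = g_N(X,\bar Y)$ is genuinely positive definite (so that Cauchy--Schwarz applies and its equality case forces $\C$-collinearity). Everything else is formal. Note that negativity of the curvature, not just nonpositivity of the Hermitian sectional curvature, is essential: without the factor $c \neq 0$ one cannot divide through, and one only recovers the weaker Siu--Sampson conclusion.
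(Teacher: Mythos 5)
Your proposal is correct and is essentially the paper's own proof: the paper writes the constant-curvature identity as $R^N(X,Y,\bar X,\bar Y)=k\Vert X\wedge Y\Vert^2$, which is exactly the Gram determinant $h(X,X)h(Y,Y)-\vert h(X,Y)\vert^2$ you compute, and then invokes the equality case of Cauchy--Schwarz to get $\C$-collinearity of $\upd f(\upT_x^{1,0}M)$ and hence rank $\leqslant 2$. The only nitpick is a harmless sign slip (with the paper's conventions, curvature $-c$ gives a factor $-c$, not $c$, in the covariant tensor), which does not affect the vanishing argument.
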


\begin{proof}
If $N$ has constant curvature $k$, then $R^N(X, Y, \bar{X}, \bar{Y}) = k \Vert X \wedge Y \Vert^2$ for all $X, Y \in \upT_{\C} N$.
By \autoref{thm:SiuSampson}, $\upd f(X^{1,0})$ and $\upd f(Y^{1,0})$ must be collinear for any $X, Y \in \upT M$. This implies 
$\operatorname{rank}(f) \leqslant 2$.
\end{proof}

\begin{remark}
 Combining with the existence theorem of Eells--Sampson (\autoref{thm:EellsSampson}), we obtain topological restrictions on maps $f \colon M \to N$ from a compact Kähler manifold to a compact hyperbolic manifold, such as: $f_* \colon H_k(M, \Z) \to H_k(N, \Z)$ is zero if $k>2$.
\end{remark}

We shall recover \autoref{thm:SampsonHn} with a more algebraic proof in \autoref{subsec:SymmSiu}. Next, 
we prove the \emph{strong rigidity theorem of Siu} for strongly negatively curved Kähler manifolds:
\begin{theorem} \label{thm:SiuRigidity1}
Let $M$ and $N$ be compact Kähler manifolds, with $N$ strongly negatively curved. Any harmonic map $M \to N$ is $\pm$-holomorphic unless 
it has rank $\leqslant 2$ everywhere.
\end{theorem}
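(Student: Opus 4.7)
The plan is to feed the conclusion of the Siu--Sampson theorem into the strong negativity hypothesis on $N$, extracting a pointwise algebraic dichotomy for $\upd f$ whenever the rank is large enough, then globalizing by unique continuation.

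First I would invoke \autoref{thm:SiuSampson}: since strong negativity implies nonpositive Hermitian sectional curvature, $f$ is pluriharmonic and
\[
R^N(X, Y, \bar X, \bar Y) = 0 \quad \text{for all } X, Y \in \upd f(\upT_x^{1,0} M)\,.
\]
Using the Kähler structure of $N$, decompose $\upd f$ on $\upT^{1,0} M$ into target-type components: for $u \in \upT_x^{1,0} M$, write $\upd f(u) = \partial f(u) + \partial \bar f(u)$ with $\partial f(u) \in \upT^{1,0} N$ and $\partial \bar f(u) \in \upT^{0,1} N$. Plugging this decomposition into the Siu--Sampson identity and expanding, \autoref{prop:CurvatureKahler} (applied to both pairs of slots of $R^N$, whence $R^N$ is of type $(1,1)(1,1)$) kills all but four of the sixteen terms: two ``diagonal'' terms of the shape $R^N(X, Y, \bar X, \bar Y)$ with $X \in \upT^{1,0} N$, $Y \in \upT^{0,1} N$, each $\leqslant 0$ by Hermitian sectional curvature, and two ``off-diagonal'' terms which are mutual complex conjugates.

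Next I would bring in strong negativity. The goal is to assemble, from $\partial f(u), \partial \bar f(v)$ and their conjugate partners, a tensor $\sigma \in \upT^{1,0} N \otimes \upT^{0,1} N$ of length $\leqslant 2$ such that $Q(\sigma, \bar\sigma)$ equals a combination of the four surviving terms forced to vanish by the (polarized) Siu--Sampson identity; strong negativity then yields $\sigma = 0$. Translated back, this gives the pointwise dichotomy: at each $x \in M$ at which $\upd f_x(\upT_x^{1,0} M)$ has complex dimension $\geqslant 2$, either $\partial f_x = 0$ (so $f$ is infinitesimally antiholomorphic) or $\partial \bar f_x = 0$ (so $f$ is infinitesimally holomorphic). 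The excluded rank $\leqslant 2$ case corresponds exactly to $\dim_{\C} \upd f_x(\upT_x^{1,0} M) \leqslant 1$, in which the Siu--Sampson identity is trivially satisfied by antisymmetry and provides no information.

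Finally, I would globalize. If $\upd f$ has real rank $> 2$ at some $x_0$, by lower semicontinuity the rank remains $> 2$ on an open neighborhood $U$ of $x_0$; the dichotomy then covers $U$ by the two closed sets $U^+ = \{\partial \bar f = 0\}$ and $U^- = \{\partial f = 0\}$. By Baire one of them has non-empty interior in $U$, and since harmonic maps between real-analytic Riemannian manifolds are real-analytic, the vanishing of $\partial \bar f$ (or $\partial f$) on that open set propagates by unique continuation to all of the connected manifold $M$, so $f$ is $\pm$-holomorphic. The main obstacle I anticipate is the algebraic heart of the third paragraph: pinpointing the precise length-$\leqslant 2$ tensor $\sigma$ so that strong negativity bites requires careful bookkeeping with the pair-swap and antisymmetry properties of $R^N$ in combination with the Kähler type decomposition on the target.
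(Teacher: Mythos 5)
Your outline follows the paper's proof essentially step for step, and the algebraic heart that you flag as the anticipated obstacle does work out exactly as you planned it. Writing $A = \upd f(X_0)$, $B = \upd f(Y)$ for $X_0, Y \in \upT_x^{1,0} M$ and decomposing into target types $A = A' + A''$, $B = B' + B''$, the fact that $R^N$ is of type $(1,1)$ in each pair of slots (\autoref{prop:CurvatureKahler}) reduces the sixteen terms to the four you describe, and they assemble into
\begin{equation*}
R^N\bigl(A, B, \bar{A}, \bar{B}\bigr) = Q(\sigma, \bar{\sigma}), \qquad \sigma \coloneqq A' \otimes B'' - B' \otimes A'' \in \upT^{1,0} N \otimes \upT^{0,1} N,
\end{equation*}
a tensor of length $\leqslant 2$, so strong negativity gives $\sigma = 0$. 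If $A$ has mixed type ($A' \neq 0$ and $A'' \neq 0$, achievable by taking $X_0 = X_1 + X_2$ or $X_0 = X_2$ when neither $\del f$ nor $\del\bar{f}$ vanishes at $x$), then $\sigma = 0$ forces $B \in \C A$, hence $\dim_\C \upd f(\upT_x^{1,0} M) \leqslant 1$; this is your dichotomy in contrapositive form and is precisely the paper's pointwise argument.

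The one genuine weak point is the globalization. Real-analyticity of harmonic maps requires real-analytic metrics, and the Kähler metrics here are only assumed smooth, so ``harmonic maps are real-analytic, hence unique continuation'' is not available as stated. The repair is already implicit in what you have: by pluriharmonicity (\autoref{prop:CharacterizationsPluriharmonic}), $\upd^{1,0} f$ is a holomorphic section of $f^* \upT_\C N$ for the Koszul--Malgrange structure $\delbar_\nabla$, and since $N$ is Kähler the splitting $\upT_\C N = \upT^{1,0} N \oplus \upT^{0,1} N$ is $\nabla$-parallel, hence holomorphic; so $\del f$ and $\del \bar{f}$ are separately holomorphic sections whose zero sets are analytic subvarieties---either all of $M$ or nowhere dense. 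This makes your Baire step unnecessary and is how the paper concludes: unless $f$ is holomorphic (resp.\ antiholomorphic), the set where $\del \bar{f} \neq 0$ (resp.\ $\del f \neq 0$) is open and dense, so the rank-$\leqslant 2$ conclusion holds on a dense set and hence, by lower semicontinuity of the rank, everywhere.
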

\begin{corollary} \label{cor:SiuRigidity2}
Let $N$ be a compact Kähler manifold of complex dimension $\geqslant 2$ with strongly negative curvature. 
Any compact Kähler manifold homotopy equivalent to $N$ must be either $\pm$-biholomorphic to $N$.
\end{corollary}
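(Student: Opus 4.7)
The plan is to combine Eells--Sampson's existence theorem with Siu's strong rigidity \autoref{thm:SiuRigidity1}. Suppose $M$ is a compact Kähler manifold and $\varphi \colon M \to N$ is a homotopy equivalence. Since $N$ has strongly negative (hence nonpositive) sectional curvature, \autoref{thm:EellsSampson} provides a smooth harmonic map $f \colon M \to N$ homotopic to $\varphi$. By \autoref{thm:SiuRigidity1}, either $f$ is $\pm$-holomorphic, or $f$ has rank $\leqslant 2$ everywhere.

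Next I would rule out the low-rank alternative using the homotopy equivalence hypothesis. Let $2n = \dim_\R N \geqslant 4$. Since $f$ is homotopic to a homotopy equivalence, the induced map $f^* \colon H^{2n}(N,\R) \to H^{2n}(M,\R)$ is an isomorphism. On the other hand, if $\operatorname{rank}(\upd f) \leqslant 2$ everywhere then $f^* \omega_N^{\,n} = 0$ pointwise for any top-degree form $\omega_N^{\,n}$, since the pullback factors through an at most two-dimensional subspace of $\upT^* M$ (so any $k$-form for $k > 2$ pulls back to zero). Integrating shows $f^* \colon H^{2n}(N,\R) \to H^{2n}(M,\R)$ is zero, contradicting that it is an isomorphism (and $H^{2n}(N,\R) \neq 0$ since $N$ is a compact oriented manifold). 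Therefore $f$ is $\pm$-holomorphic.

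Up to replacing $N$ by its complex conjugate, we may assume $f$ is holomorphic. The main remaining obstacle is to upgrade this to a biholomorphism. The plan is as follows. Being homotopic to a homotopy equivalence, $f$ has degree $\pm 1$, so in particular $f$ is surjective and its Jacobian is not identically zero. Hence $f$ is a proper surjective holomorphic map of degree one between smooth compact connected complex manifolds of the same dimension, which means it is bimeromorphic: there exists a proper analytic subvariety $E \subset M$ (the exceptional locus, where the Jacobian vanishes) such that $f$ restricts to a biholomorphism $M \setminus E \to N \setminus f(E)$ with $\dim_\C f(E) < \dim_\C E$. If $E$ were nonempty, then $f$ would contract a positive-dimensional analytic subset, which strictly decreases Betti numbers (for instance, $E$ supports a nonzero class in $H_{2k}(M,\R)$ for $2k = 2\dim_\R E$ whose image under $f_*$ vanishes for dimension reasons), contradicting that $f$ is a homotopy equivalence. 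Therefore $E = \varnothing$ and $f$ is a biholomorphism.

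The hard part will be the final step: carefully justifying that a degree-one holomorphic map between compact complex manifolds of the same dimension which is a homotopy equivalence is necessarily a biholomorphism. One must invoke nontrivial complex-analytic input (the proper mapping theorem and the structure of the exceptional locus of a degree-one holomorphic map) to exclude contraction phenomena such as blowdowns. The first two steps, by contrast, are direct applications of the previously developed theory.
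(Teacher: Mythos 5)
Your proposal is correct and follows essentially the same route as the paper: Eells--Sampson to harmonically represent the homotopy class, \autoref{thm:SiuRigidity1} to get $\pm$-holomorphy once the rank-$\leqslant 2$ alternative is excluded, and a cohomological argument (nontriviality of the fundamental class of any contracted subvariety, detected by integrating a power of the Kähler form) to rule out non-injectivity. Your exclusion of the low-rank case via $f^*\omega_N^{\,n}=0$ and your appeal to the bimeromorphic structure of a degree-one holomorphic map are just more explicit versions of the paper's ``degree $1$, hence full rank somewhere'' and ``if not injective, it contracts a positive-dimensional subvariety'' steps.
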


\begin{proof}[Proof of \autoref{thm:SiuRigidity1} and \autoref{cor:SiuRigidity2}]
 If $f$ is not $\pm$-holomorphic at $x$, there exist $X_1, X_2 \in \upT_x^{1,0} M$ such
 that $\upd f(X_1)$ is not of type $(1,0)$ and $\upd f(X_2)$ is not of type $(0,1)$. One can then find $X_0 \in \upT_x^{1,0} M$
 such that $\upd f(X_0)$ is of mixed type (either $X_0 = X_1 + X_2$ or $X_0 = X_2$ works). Given any $Y \in \upT_x^{1,0} M$,
 consider $\sigma \coloneqq \upd f (X_0)^{1,0} \otimes \upd f (Y)^{0,1} -\upd f (Y)^{1,0} \otimes\upd f (X_0)^{0,1}$. If $f$ is harmonic,
 then by \autoref{thm:SiuSampson} $R(\upd f(X_0), \upd f(Y), \overline{\upd f(X_0)}, \overline{\upd f(Y)}) = Q(\sigma, \bar{\sigma}) = 0$.
 By strongly negative curvature of $N$, it follows $\sigma = 0$. This implies that $\upd f(Y)$ and $\upd f(X_0)$ are collinear over $\C$,
 and since this is true for all $Y$, the rank of $f$ at $x$ is $\leqslant 2$. If $f$ is not holomorphic (resp.\ antiholomorphic) on $M$, 
 the set $U$ (resp.\ $V$) where $\upd f( \cdot )^{0,1} \neq 0$ (resp.\ $\upd f( \cdot )^{1,0} \neq 0$)
 is open dense. Thus $f$ has rank $\leqslant 2$ on the open dense set $U \cap V$, and hence everywhere.
 
 For \autoref{cor:SiuRigidity2}, let $f \colon M \to N$ be a homotopy equivalence. By \autoref{thm:EellsSampson} we may assume
 $f$ is harmonic. Since $M$ and $N$ are closed, they must have same dimension and $f$ has degree $1$. In particular $f$ is surjective 
 and has full rank $n \geqslant 4$ somewhere. By \autoref{thm:SiuRigidity1}, $f$ is $\pm$-holomorphic. If $f$ is not injective,
 then it must send a subvariety $Z \subseteq M$ of positive dimension to a point. This is not possible because the fundamental class of $Z$
 is a nontrivial cohomology class of $M$ (the integral of a power of the Kähler form on $Z$ is the volume of $Z$, hence positive) and $f$
 induces an isomorphism on cohomology. 
\end{proof}

\begin{remark}
 \autoref{cor:SiuRigidity2} says that $N$ is \emph{strongly rigid} as a Kähler manifold. This echoes the celebrated rigidity theorem of Mostow; this will be discussed
 in \autoref{subsec:Mostow}.
\end{remark}

\begin{remark}
We refer to  \cite{MR1363515} and \cite{MR1082880} for further rigidity results concerning complex manifolds via the harmonic maps approach.
\end{remark}

\section{Applications to symmetric spaces} \label{sec:SymmetricSpaces}

We now apply the theorem of Siu and Sampson to the situation where the target Riemannian manifold is locally symmetric. We derive in particular a second version of Siu's strong rigidity theorem,
and explain its relation to Mostow rigidity. 

\begin{remark}
This section, especially \autoref{subsec:AbelianSubalgebras} and \autoref{subsec:SymmSiu}, is largely based on Chapter 6 (mostly written by D. Toledo) of the excellent book \cite{MR1379330} of 
Amorós, Burger, Corlette, Kotschick, and Toledo.
\end{remark}

\subsection{Symmetric spaces of noncompact type}
\label{subsec:SymmSpaces}

Let $\mathbf{X} = G/K$ be a symmetric space of noncompact type: $G$ is a semisimple Lie group without compact factors and $K$ is a maximal compact
subgroup\footnote{$G$ is assumed connected by definition of semisimple, and without loss of generality we can assume it has trivial center.}. Denote by $\mathfrak{g}$ and $\mathfrak{k}$ the Lie algebras of $G$ and $K$ respectively, and denote $B$ the Killing form on $\mathfrak{g}$.
Recall that $B$ is an $\operatorname{Ad} G$-invariant symmetric bilinear form on $\mathfrak{g}$, and it is nondegenerate because $\mathfrak{g}$ is semisimple.
We have the Cartan decomposition
\begin{equation}
 \mathfrak{g} = \mathfrak{k} \oplus \mathfrak{p}
\end{equation}
where $\mathfrak{p}$ is the $B$-orthogonal of $\mathfrak{k}$. This decomposition satisfies $[\mathfrak{k}, \mathfrak{k}] \subseteq \mathfrak{k}$, $[\mathfrak{k}, \mathfrak{p}] \subseteq \mathfrak{p}$, 
and $[\mathfrak{p}, \mathfrak{p}] \subseteq \mathfrak{k}$. Moreover $B$ is positive definite on $\mathfrak{p}$ and
negative definite on $\mathfrak{k}$. Thus $B$ induces an inner product on $\upT_p \mathbf{X} \approx \mathfrak{p}$ for any $p \in \mathbf{X}$; this globally defines a $G$-invariant Riemannian metric on $\mathbf{X}$.

The curvature tensors (see \autoref{subsec:CurvatureTensors}) on $\mathbf{X}$ have nice expressions:
\begin{proposition} \label{prop:CurvSymmSpace}
Let $o = eK \in \mathbf{X} = G/K$. For all $X, Y, Z, W \in \upT_{o} \mathbf{X} \approx \mathfrak{p}$, we have:
\begin{itemize}
 \item Riemann curvature tensor: $R(X,Y) Z= -[[X,Y],Z]$
 \item purely covariant  Riemann tensor: $R(X,Y, Z, W)  = -B[[X,Y], [Z,W]]$
 \item curvature operator: $Q (X \otimes Y, X \otimes Y) = -B[[X,Y] , [X,Y]]$
\end{itemize}
\end{proposition}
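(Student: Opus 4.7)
The plan is to realize $T_o \mathbf{X} \cong \mathfrak{p}$ via the fundamental Killing vector fields $X^\ast_p = \frac{\upd}{\upd t}\big|_{t=0} \exp(tX)\cdot p$ for $X \in \mathfrak{g}$, noting that $X^\ast_o = X$ for $X \in \mathfrak{p}$, $A^\ast_o = 0$ for $A \in \mathfrak{k}$ (since $\exp(tA)$ fixes $o$), and $[X^\ast, Y^\ast] = -[X,Y]^\ast$. I will need two preliminary ingredients. \emph{(a)} For $X, Y \in \mathfrak{p}$, $(\nabla_{X^\ast} Y^\ast)_o = 0$: the geodesic symmetry $s_o$ is an isometry fixing $o$ with $\upd(s_o)_o = -\mathrm{id}$ and $(s_o)_\ast X^\ast = -X^\ast$ on $\mathfrak{p}$-generated fields (Cartan involution), so $(\nabla_{X^\ast} Y^\ast)_o$ is forced to equal its own negative. \emph{(b)} For $A \in \mathfrak{k}$, the field $A^\ast$ vanishes at $o$ with linearization $\operatorname{ad}(A)|_\mathfrak{p}$, because $\exp(tA)$ acts on $T_o \cong \mathfrak{p}$ by $\operatorname{Ad}(\exp(tA)) = e^{t\operatorname{ad}(A)}$; hence $(\nabla_{Y^\ast} A^\ast)_o = [A, Y]$ for $Y \in \mathfrak{p}$.

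Next I compute $R(X^\ast, Y^\ast)Z^\ast|_o$ for $X, Y, Z \in \mathfrak{p}$. The bracket term vanishes: $\nabla_{[X^\ast, Y^\ast]} Z^\ast|_o = -\nabla_{[X,Y]^\ast} Z^\ast|_o = 0$, since $[X,Y] \in \mathfrak{k}$ gives $[X,Y]^\ast_o = 0$. For the second-order terms I use $G$-equivariance of $\nabla$ to transport back to $o$: along $\gamma(t) = \exp(tX)\cdot o$,
\begin{equation*}
 (\nabla_{Y^\ast} Z^\ast)_{\gamma(t)} = \exp(tX)_\ast \left[\nabla_{(\operatorname{Ad}(e^{-tX})Y)^\ast}(\operatorname{Ad}(e^{-tX})Z)^\ast\right]_o.
\end{equation*}
The inner bracketed quantity vanishes at $t=0$ by (a), so the $t$-derivative at $0$ reduces to the variations of its two arguments. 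Combined with $\frac{\upd}{\upd t}\big|_0 \operatorname{Ad}(e^{-tX})Y = -[X,Y]$ and ingredient (b), this yields $(\nabla_{X^\ast}\nabla_{Y^\ast} Z^\ast)_o = -[[X,Z], Y]$, and symmetrically $(\nabla_{Y^\ast}\nabla_{X^\ast} Z^\ast)_o = -[[Y,Z], X]$. Assembling these three contributions and applying the Jacobi identity collapses the result to $R(X,Y)Z = -[[X,Y], Z]$.

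The remaining two formulas are then pure linear algebra: since the metric on $\mathfrak{p}$ is $B|_\mathfrak{p}$, one gets $\langle R(X,Y)Z, W\rangle = -B([[X,Y], Z], W)$, and the $\operatorname{ad}$-invariance identity $B([A, U], V) = B(A, [U, V])$ rewrites this in terms of $B([X,Y], [Z,W])$, from which the covariant expression for $R(X,Y,Z,W)$ and the curvature operator formula follow by specialization. The main obstacle is the differentiation step in paragraph two: one must carefully justify that because the inner bracketed quantity vanishes at $t=0$, the contribution from differentiating the outer pushforward $\exp(tX)_\ast$ drops out, so only the variation of the Lie-algebra arguments survives --- once this bookkeeping is in place, the rest is just Jacobi and $\operatorname{ad}$-invariance.
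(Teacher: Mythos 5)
Your proof is correct and takes essentially the same route as the paper's: both realize $\mathfrak{g}$ as Killing fields with $[\bar{X},\bar{Y}]=-\overline{[X,Y]}$, use that the $\mathfrak{k}$-fields vanish at $o$ while the $\mathfrak{p}$-fields are the infinitesimal transvections, and finish with the Jacobi identity and the $\operatorname{ad}$-invariance of $B$; your explicit transport by $\exp(tX)_*$ along $\gamma$ is just the unwound form of the paper's Leibniz rule $\nabla_{\bar{X}} \nabla_{\bar{Y}} \bar{Z}=[\bar{X},\nabla_{\bar{Y}} \bar{Z}]=\nabla_{[\bar{X},\bar{Y}]} \bar{Z}+\nabla_{\bar{Y}}[\bar{X},\bar{Z}]$, and your ingredient (b) is recovered in the paper from the stronger fact that $(\nabla\bar{Y})_o=0$ for $Y\in\mathfrak{p}$. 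One caveat on your final paragraph: your (correct) identity $\langle R(X,Y)Z,W\rangle=-B([X,Y],[Z,W])$ yields $R(X,Y,Z,W)=+B([X,Y],[Z,W])$ under the convention $R(X,Y,Z,W)=-\langle R(X,Y)Z,W\rangle$ of \eqref{eq:PurelyCovariantCurvatureTensor}, so the second and third bullets only come out with the printed sign if that extra minus is dropped --- make explicit which convention you are specializing to, since the sign is what later guarantees $Q(\sigma,\bar{\sigma})\leqslant 0$.
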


\begin{proof}
Since $G$ is the group of isometries of $\mathbf{X}$, $\mathfrak{g}$ can be identified to the space of Killing vector fields. We indicate by $\bar{X}$ the 
Killing field associated to $X \in \mathfrak{g}$. One easily checks that $[\bar{X},\bar{Y}] = -\overline{[X,Y]}$, where the Lie bracket is that of vector fields on the left-hand side.
Let $o = eK \in \mathbf{X}$. A quick computation shows that the Levi-Civita connection of $\mathbf{X}$ at $o$ is given by $\left(\nabla_X T\right)_o = [\bar{X}, T]_o$.
Note that Killing fields vanishing at $o$ correspond to the Lie subalgebra $\mathfrak{k} \subseteq \mathfrak{g}$, while Killing fields $\bar{X}$ such that $\left(\nabla \bar{X} \right)_o = 0$
correspond to $X \in \mathfrak{p} \subseteq \mathfrak{g}$ (called \emph{infinitesimal transvections}).

Since $R$ is tensorial, we have, at $o$:
\begin{equation}
 R(X,Y)Z = R(\bar{X}, \bar{Y}) \bar{Z} = \nabla_{\bar{X}} \nabla_{\bar{Y}} \bar{Z} - \nabla_{\bar{Y}} \nabla_{\bar{X}} \bar{Z} - \nabla_{[\bar{X}, \bar{Y}]} \bar{Z}\,.
\end{equation}
The last term is zero because $[\bar{X}, \bar{Y}] = - \overline{[X,Y]} = 0$ at $o$ since $[X,Y] \in \mathfrak{k}$.
The first term is rewritten  $\nabla_{\bar{X}} \nabla_{\bar{Y}} \bar{Z} = [\bar{X}, \nabla_{\bar{Y}} \bar{Z}] = \nabla_{[\bar{X}, \bar{Y}]} \bar{Z} + \nabla_{\bar{Y}}[\bar{X}, \bar{Z}]$
---by some sort of Leibniz rule for the Lie derivative along a Killing field. Again the first term vanishes, so 
$\nabla_{\bar{X}} \nabla_{\bar{Y}} \bar{Z} = \nabla_{\bar{Y}}[\bar{X}, \bar{Z}] = [\bar{Y},[\bar{X}, \bar{Z}]]$.
We thus find
\begin{equation}
 \begin{split}
  R(X,Y)Z &= [\bar{Y},[\bar{X}, \bar{Z}]] - [\bar{X},[\bar{Y}, \bar{Z}]]\\
  &= [\bar{Z},[\bar{X}, \bar{Y}]]
 \end{split}
\end{equation}
by the Jacobi identity. Conclude that $R(X,Y) Z= [Z,[X,Y]]$ using the property $[\bar{U},\bar{V}] = -\overline{[U,V]}$. The expression of the purely covariant Riemann tensor
and the curvature operator quickly follows.
\end{proof}

\begin{remark}
The proof above is taken from \cite{PecastaingSymmetricSpaces}. The reader will find alternative proofs in classical textbooks such as
\cite{MR1393941, MR1834454,  PaulinNotes}.
\end{remark}

\begin{remark}
The fact that $\mathbf{X} = G/K$ is a Riemannian symmetric space, \ie{} there exists a central symmetry about every point, easily implies that $\nabla R = 0$. 
Conversely, any Riemannian manifold whose curvature tensor is parallel is locally isometric to a symmetric space $G/K$ (not necessarily of noncompact type).
\end{remark}

A seemingly innocuous consequence of \autoref{prop:CurvSymmSpace} is that the complexified curvature operator is also given by 
$Q (X \otimes Y, X \otimes Y) = -B^\C[[X,Y] , [X,Y]]$, where $B^\C$ denotes the complex linear extension of the Killing form to $\mathfrak{p}^\C \coloneqq \mathfrak{p} \otimes \C$.
This not only implies that $Q(\sigma, \bar{\sigma}) \leqslant 0$ for all $\sigma \in \medotimes^2 \mathfrak{p}^\C$; furthermore we have $R(X,Y) = 0$ if $Q (X \otimes Y, X \otimes Y) = 0$ for any $X, Y \in \mathfrak{p}^\C$, which is exceptionally strong. Since $R$ is parallel, this must hold at any point of $\mathbf{X}$, therefore we obtain:
\begin{corollary} \label{cor:CurvSymmSpace}
Let $\mathbf{X} = G/K$ be a symmetric space of noncompact type.
\begin{enumerate}[(i)]
 \item \label{cor:CurvSymmSpacei} $\mathbf{X}$ has very strongly nonpositive curvature. 
 \item \label{cor:CurvSymmSpaceii} At any $p \in \mathbf{X}$, we have, for all $X, Y \in \upT_p \mathbf{X} \otimes \C$: if $R(X, Y, X, Y) = 0 $ then $R(X,Y) = 0$.
\end{enumerate}
\end{corollary}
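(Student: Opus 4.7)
The plan is to $\C$-linearize everything: extend the Lie bracket and the Killing form to $\mathfrak{g}^\C := \mathfrak{g} \otimes_\R \C$ by $\C$-bilinearity. Since the formulas of \autoref{prop:CurvSymmSpace} are polynomial in their arguments, they persist verbatim for complex vectors $X, Y, Z, W \in \mathfrak{p}^\C$, and in particular the curvature operator becomes
\begin{equation*}
 Q(X \otimes Y, Z \otimes W) = -B^\C([X,Y], [Z,W]).
\end{equation*}
I introduce the $\C$-linear map
\begin{equation*}
 A \colon \medotimes^2 \mathfrak{p}^\C \to \mathfrak{k}^\C, \qquad A\Bigl(\sum_i X_i \otimes Y_i\Bigr) := \sum_i [X_i, Y_i],
\end{equation*}
which is well-defined into $\mathfrak{k}^\C$ (since $[\mathfrak{p}, \mathfrak{p}] \subseteq \mathfrak{k}$) and commutes with conjugation (since the bracket is $\R$-bilinear). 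Bilinear extension of the formula above yields $Q(\sigma, \tau) = -B^\C(A(\sigma), A(\tau))$ for all $\sigma, \tau \in \medotimes^2\mathfrak{p}^\C$.

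For (i), I plug in $\tau = \bar\sigma$ and decompose $A(\sigma) = A_1 + iA_2$ with $A_1, A_2 \in \mathfrak{k}$. A direct calculation gives
\begin{equation*}
 B^\C(A(\sigma), \overline{A(\sigma)}) = B(A_1, A_1) + B(A_2, A_2),
\end{equation*}
which is $\leq 0$ because $B$ is negative definite on $\mathfrak{k}$, with equality if and only if $A(\sigma) = 0$. Accounting for the sign in the formula for $Q$, this gives $Q(\sigma, \bar\sigma) \leq 0$ at the basepoint $o = eK$, and parallel transport (using $\nabla R = 0$, which holds on any symmetric space) propagates very strong nonpositivity to every point.

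For (ii), I specialize to a decomposable tensor $\sigma = X \otimes Y$. The hypothesis is exactly the equality case of the previous Hermitian computation: by the strict negative-definiteness of $B$ on $\mathfrak{k}$, the vanishing of $B^\C(A(\sigma), \overline{A(\sigma)})$ forces $A(\sigma) = [X, Y] = 0$ in $\mathfrak{k}^\C$. Then the first formula of \autoref{prop:CurvSymmSpace} immediately yields $R(X, Y) = -\operatorname{ad}[X, Y] = 0$ on all of $\mathfrak{p}^\C$.

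The main obstacle is purely bookkeeping: one must carefully distinguish the $\C$-bilinear extension $B^\C$ of the Killing form from its Hermitian counterpart $(u, v) \mapsto B^\C(u, \bar v)$, as the former admits isotropic vectors on $\mathfrak{k}^\C$ whereas the latter is definite (this is why the hypothesis in (ii) must be read with a conjugation in the last two slots, matching the Hermitian sectional curvature conventions of \autoref{subsec:HermitianSectionalCurvature}; without it, $\mathfrak{sl}_2(\R) \oplus \mathfrak{sl}_2(\R)$ would already furnish a counterexample). Once this is sorted, both items reduce to the same elementary fact: negative-definiteness of $B$ on $\mathfrak{k}$, which is ultimately what distinguishes symmetric spaces of noncompact type from their compact duals.
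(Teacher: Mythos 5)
Your proposal is correct and takes essentially the same route as the paper: complexify the bracket formula for the curvature operator from \autoref{prop:CurvSymmSpace}, factor it through the map $\sigma \mapsto A(\sigma) \in \mathfrak{k}^\C$, use that $B$ is negative definite on $\mathfrak{k}$ so that $w \mapsto B^\C(w,\bar w)$ is definite on $\mathfrak{k}^\C$, and invoke $\nabla R = 0$ to propagate from the basepoint. Two small remarks: your observation that item (ii) must be read with conjugated last slots (as it is in fact applied in \autoref{subsec:SymmSiu}) is a correct sharpening of the literal statement, and for the inequality $Q(\sigma,\bar\sigma)\leqslant 0$ to come out as you assert, the overall sign in the bracket formula (hence in your display for $Q$) has to be $+B^\C([X,Y],[Z,W])$, which is the sign consistent with the conventions $R(X,Y)Z=-[[X,Y],Z]$ and $R(X,Y,Z,W)=-\langle R(X,Y)Z,W\rangle$.
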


\begin{remark}
A \emph{Cartan subspace} of $\mathfrak{g} = \mathfrak{k} + \mathfrak{p}$ is defined to be a maximal abelian subspace of $\mathfrak{p}$. All Cartan subspaces have the same dimension, called the \emph{rank} of $\mathbf{X}$. It follows from \autoref{cor:CurvSymmSpace} that the rank of $\mathbf{X}$ is the maximal dimension of a flat subspace of its tangent space.
\end{remark}

\begin{example}
 Hyperbolic space $\mathbf{X} = \H^n$, where $G = \SO^+(n,1)$ and $K = \SO(n) \times \{1\}$, is a symmetric space of noncompact type of rank $1$. As is well-known, $\bH^n$ is isotropic with constant sectional curvature $-1$. This confirms \ref{cor:CurvSymmSpacei}, and note that \ref{cor:CurvSymmSpaceii} never arises unless $X$ and $Y$ are collinear.
\end{example}

Hermitian symmetric spaces are another important class of examples, especially for the applications of the Siu--Sampson theorem. $\mathbf{X} = G/K$ is called \emph{Hermitian symmetric} is there exists a $G$-invariant, equivalently parallel, compatible complex structure $J$ (in particular $\mathbf{X}$ Kähler). One can show that $\mathbf{X} = G/K$ is Hermitian symmetric if and only if $K$ has nondiscrete center \cite[Chap.\ VIII, \S6]{MR1834454}.

\begin{remark}
If $\mathbf{X} = G/K$ is irreducible, \ie{} $G$ is simple, the complex structure $J$ is unique up to sign, and the center of $K$ is a copy of $\U(1)$. Otherwise, $J$ is determined up to sign on each irreducible factor.
\end{remark}

\begin{example}
A bounded domain $\Omega \subseteq \C^n$ is called a \emph{bounded symmetric domain} if for all $x \in \Omega$, there exists an involutive biholomorphism of $\Omega$ with $x$ as an isolated fixed point. On such a domain, one can define a Hermitian metric called the \emph{Bergman metric}
for which biholomorphisms of $\Omega$ are isometries. Letting $G$ denote the biholomorphism group of $\Omega$ and $K$ the stabilizer of a point,
$\mathbf{X} = G/K \approx \Omega$ is a Hermitian symmetric space of noncompact type, whose Hermitian structure is given by the Bergman metric on $\Omega$.
Conversely, the \emph{Harish-Chandra embedding} exhibits every Hermitian symmetric space of noncompact type as a bounded symmetric domain \cite[Thm~7.1]{MR1834454}. As a fundamental example, taking the unit ball in $\C^{n}$ for $\Omega$ yields the ball model of complex hyperbolic space $\H_\C^n$.
\end{example}

\subsection{\texorpdfstring{Abelian subalgebras of $\mathfrak{p}^\C$}{Abelian subalgebras of p\textasciicircum{C}}}
\label{subsec:AbelianSubalgebras}

Let $\mathbf{X} = G/K$ be a symmetric space of noncompact type as in \autoref{subsec:SymmSpaces}. Recall that we have the Cartan decomposition 
$\mathfrak{g} = \mathfrak{k} \oplus \mathfrak{p}$. In preparation of \autoref{subsec:SymmSiu}, let us investigate abelian complex subspaces $\mathfrak{a} \subseteq \mathfrak{p}^\C$, especially their dimension. 
Of course, we always have $\dim_\C \mathfrak{a} \leqslant \dim_\C \mathfrak{p}^\C = \dim \mathbf{X}$. Can a better upper bound be achieved? Taking $\mathfrak{a} = \mathfrak{a}_0^\C$ where $\mathfrak{a}_0 \subseteq \mathfrak{p}$ is a Cartan subspace shows that one cannot hope for better than $\dim_\C \mathfrak{a} \leqslant \operatorname{rank}(\mathbf{X})$. First let us look at a simple example where this bound does hold:
\begin{lemma} \label{lem:AbelianHn}
 Let $\mathbf{X} = \H^n$. If $\mathfrak{a} \subseteq \mathfrak{p}^\C$ is abelian, then $\dim_\C \mathfrak{a} \leqslant 1$.
\end{lemma}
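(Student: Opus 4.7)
The plan is to use the explicit matrix description of $\mathfrak{so}(n,1)$ to identify the Lie bracket on $\mathfrak{p}$ with the exterior product, and then complexify. Realize $\mathfrak{g} = \mathfrak{so}(n,1)$ as the matrices in $\mathfrak{gl}(n+1, \R)$ preserving a Lorentzian form. The Cartan decomposition reads $\mathfrak{k} = \mathfrak{so}(n)$, sitting block-diagonally in the last $n$ coordinates, and $\mathfrak{p} \cong \R^n$ via
\[
v \longmapsto X_v = \begin{pmatrix} 0 & v^T \\ v & 0 \end{pmatrix}\,.
\]

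A one-line matrix calculation then gives
\[
[X_v, X_w] = \begin{pmatrix} 0 & 0 \\ 0 & v w^T - w v^T \end{pmatrix}\,,
\]
so that the bracket $\mathfrak{p} \times \mathfrak{p} \to \mathfrak{k}$ is, under the canonical isomorphism $\mathfrak{so}(n) \cong \Lambda^2 \R^n$, the exterior product $(v, w) \mapsto v \wedge w$. Being $\R$-bilinear, this identification extends $\C$-bilinearly to give the bracket on $\mathfrak{p}^\C \cong \C^n$, which is again the wedge product, now valued in $\Lambda^2 \C^n \cong \mathfrak{so}(n, \C) = \mathfrak{k}^\C$.

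A standard fact from exterior algebra finishes the argument: $v \wedge w = 0$ in $\Lambda^2 \C^n$ if and only if $v$ and $w$ are linearly dependent over $\C$. Hence if $\mathfrak{a} \subseteq \mathfrak{p}^\C$ is abelian, any two of its elements are $\C$-collinear, which forces $\dim_\C \mathfrak{a} \leqslant 1$.

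I expect no significant obstacle; the whole proof reduces to a short Lie algebra computation. It is worth flagging, however, that a naive curvature-theoretic approach via \autoref{prop:CurvSymmSpace} is insufficient: one would only derive that $B^\C([X, Y], [X, Y])$ is proportional to the Gram determinant $B^\C(X,X)\, B^\C(Y,Y) - B^\C(X,Y)^2$, whose vanishing does not force $[X,Y] = 0$, because $B^\C$ on $\mathfrak{k}^\C$ is nondegenerate but indefinite. (For $n \geqslant 4$, taking $X = e_1 + i e_2$ and $Y = e_3 + i e_4$ makes all three quantities $B^\C(X,X)$, $B^\C(Y,Y)$, $B^\C(X,Y)$ vanish, yet $X \wedge Y \neq 0$.) The finer wedge-product identification of the bracket is thus essential.
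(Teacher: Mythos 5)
Your proof is correct and is essentially the paper's own (first) argument: the paper likewise realizes $\mathfrak{p}^\C$ as the off-diagonal block matrices $\begin{bmatrix} 0 & B \\ \transpose{B} & 0 \end{bmatrix}$ in $\mathfrak{so}(n,1)\otimes\C$ and observes that for $q=1$ two such matrices commute iff they are collinear; you merely make explicit the identification of the bracket with the wedge product $v\wedge w\in\Lambda^2\C^n$, which is also the heart of the paper's alternative geometric proof via $R(X,Y)=X^\flat\wedge Y^\flat$. Your side remark that the scalar identity $Q(X\otimes Y,X\otimes Y)=-B^\C[[X,Y],[X,Y]]$ alone would not suffice (since $B^\C$ is isotropic on $\mathfrak{k}^\C$) is accurate and well taken.
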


\begin{proof}
As an instructive exercise, let us write both an algebraic and a geometric proof. For the algebraic proof, let $\mathfrak{g} = \mathfrak{so}(n,1)$. Generally, elements of $\mathfrak{so}(p,q)$ are matrices  $M = \begin{bmatrix}
 A & B\\
 \transpose{B} & D
\end{bmatrix} \in \mathfrak{sl}(p+q, \R)$ with $\transpose{A} = -A$ and $\transpose{D} = -D$. The Cartan decomposition $\mathfrak{g} = \mathfrak{k} \oplus \mathfrak{p}$ consists in writing $M = 
\begin{bmatrix}
 A & 0\\
 0 & D
\end{bmatrix} + \begin{bmatrix}
 0 & B\\
 \transpose{B} & 0
\end{bmatrix}$.
Thus $\mathfrak{p}^\C$ consits of matrices of the form 
$\begin{bmatrix}
 0 & B\\
 \transpose{B} & 0
\end{bmatrix}$ where $B$ is a $p \times q$ matrix with complex coefficients. When $q=1$, two such matrices
commute if and only if they are collinear, and the result follows.

Now  the geometric proof. Since $\H^n$ has sectional curvature $-1$,
its Riemann curvature tensor is given by $\langle R(X,Y)Z, W \rangle = \langle X, Z \rangle \langle Y, W\rangle - \langle Y, Z \rangle \langle X, W\rangle$. A neat way to interpret this identity is that $R(X,Y)$ is the skew-adjoint endomorphism of $\upT \mathbf{X}$
associated to $X^\flat \wedge Y^\flat \in \Lambda^2 \upT^* \mathbf{X}$, where $X^\flat \in \upT^* \mathbf{X}$ indicates the metric dual of $X \in \upT \mathbf{X}$. The identification $R(X,Y) = X^\flat \wedge Y^\flat$ extends to $\upT_{\C} \mathbf{X} $. It follows that, for all $X, Y \in \upT_{\C} \mathbf{X} $, $R(X,Y) = 0$ if and only if $X$ and $Y$ are collinear. The conclusion ensues.
\end{proof}

In the example above, all elements of $\mathfrak{p}^\C$ are semisimple. One can show that if $\mathfrak{a}$ only has semisimple elements, it is conjugate to $\mathfrak{a}_0^\C$ for some abelian subspace $\mathfrak{a}_0 \subseteq \mathfrak{p}$, therefore $\dim_\C \mathfrak{a} \leqslant \operatorname{rank}(\mathbf{X})$. However, in general, $\mathfrak{a}$ can contain nilpotent elements, and its dimension can be much higher.

Indeed, assume $\mathbf{X}$ is Hermitian symmetric. The linear complex structure $J$ in $\upT_o \mathbf{X} \approx \mathfrak{p}$ yields a decomposition of $\mathfrak{p}^\C$ into $\pm i$-eigenspaces as $\mathfrak{p}^\C = \mathfrak{p}^{1,0} \oplus \mathfrak{p}^{0,1}$. Since $J$ is integrable, we must have\footnote{On an almost complex manifold $(\mathbf{X},J)$, the involutivity of the distribution $\upT^{1,0}\mathbf{X}$, \ie{} its stability under the Lie bracket, is equivalent to the vanishing of the Nijenhuis tensor.} $[\mathfrak{p}^{1,0}, \mathfrak{p}^{1,0}] \subseteq \mathfrak{p}^{1,0}$. On the other hand, $[\mathfrak{p}, \mathfrak{p}] \subseteq \mathfrak{k}$ implies that $[\mathfrak{p^\C}, \mathfrak{p^\C}] \subseteq \mathfrak{k^\C}$; therefore $[\mathfrak{p}^{1,0}, \mathfrak{p}^{1,0}] = 0$. Thus, for Hermitian symmetric spaces, $\mathfrak{a} = \mathfrak{p}^{1,0}$ shows that $\dim_\C \mathfrak{a} = \frac{1}{2}\dim \mathbf{X}$ is possible.

\begin{example}
It is instructive to work out the example $\mathbf{X} = \SU(p,q) / \operatorname{S}(\U(p) \times \U(q))$, similarly to $\mathfrak{so}(p,q)$ above. This is done in \cite[Example 6.9]{MR1379330}.
\end{example}

The following result of Carlson--Toledo shows that the bound $\dim_\C \mathfrak{a} \leqslant \frac{1}{2}\dim \mathbf{X}$ holds for all symmetric spaces
of noncompact type, and is only attained for Hermitian symmetric spaces:
\begin{theorem}[Carlson--Toledo \cite{MR1019964}] \label{thm:CarlsonToledo}
Let $\mathbf{X} = G/K$ be a symmetric space of noncompact type. If $\mathfrak{a} \subseteq \mathfrak{p}^\C$ is an abelian subalgebra, then
$\dim_\C \mathfrak{a} \leqslant \frac{1}{2} \dim \mathbf{X}$. Moreover, if equality holds, $\mathbf{X}$ is Hermitian symmetric,
and provided $\mathbf{X}$ has no $\H^2$ factor\footnote{$\mathbf{X}$ has no $\H^2$ factor if it is not isometric to $\H^2 \times \mathbf{X}_1$ where $\mathbf{X}_1$ is a symmetric space. Equivalently, $G$ has no $\PSL(2,\R)$ factor.} we have $\mathfrak{a} = \mathfrak{p}^{1,0}$ for some invariant complex structure on $\mathbf{X}$.
\end{theorem}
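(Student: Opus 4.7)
The plan is to reduce the dimension bound to a combinatorial statement about root systems via Jordan decomposition, bound the resulting semisimple and nilpotent pieces, and then analyze the equality case to extract an invariant complex structure on $\mathbf{X}$. Throughout I work in the complexification $\mathfrak{g}^\C$ with the positive-definite Hermitian form $h(X, Y) = -B^\C(X, \theta \bar Y)$, where $\theta$ is the (complex-linearly extended) Cartan involution and $\bar{\phantom{X}}$ denotes complex conjugation with respect to $\mathfrak{g}$; a short calculation using the $\operatorname{Ad}$-invariance of $B$ yields the key adjointness relation $\operatorname{ad}(X)^* = \operatorname{ad}(\bar X)$ for $X \in \mathfrak{p}^\C$. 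Since $\theta$ commutes with Jordan decomposition in $\mathfrak{g}^\C$ and negates $\mathfrak{p}^\C$, both the semisimple and nilpotent components of any $X \in \mathfrak{p}^\C$ remain in $\mathfrak{p}^\C$; enlarging $\mathfrak{a}$ to contain these Jordan pieces (which automatically commute with everything that commutes with $X$) produces an abelian subalgebra of at least the same dimension that splits as $\mathfrak{a} = \mathfrak{a}_s \oplus \mathfrak{a}_n$ into commuting semisimple and nilpotent parts. A standard $K^\C$-conjugacy result places $\mathfrak{a}_s$ inside $\mathfrak{a}_0^\C$ for some Cartan subspace $\mathfrak{a}_0 \subseteq \mathfrak{p}$, giving $\dim_\C \mathfrak{a}_s \leqslant \operatorname{rank}(\mathbf{X})$.

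For the nilpotent part, I would fix a $\theta$-stable Cartan subalgebra $\mathfrak{h} \supseteq \mathfrak{a}_0^\C$ of $\mathfrak{g}^\C$ and the associated root space decomposition $\mathfrak{g}^\C = \mathfrak{h} \oplus \bigoplus_\alpha \mathfrak{g}_\alpha$. Up to a further conjugation by the centralizer of $\mathfrak{a}_s$ in $K^\C$, the nilpotent abelian subspace $\mathfrak{a}_n$ moves inside the positive nilradical $\mathfrak{n}^+ \cap \mathfrak{p}^\C$, and the abelian condition translates into the combinatorial statement that no two roots appearing in $\mathfrak{a}_n$ sum to a root. The main obstacle is the sharp combined bound $\dim_\C \mathfrak{a} = \dim_\C \mathfrak{a}_s + \dim_\C \mathfrak{a}_n \leqslant \frac{1}{2}\dim \mathbf{X}$: the two pieces interact nontrivially because the nilpotent part must centralize the semisimple part, which restricts the root spaces that can contribute. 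This is essentially Kostant's theorem on maximal abelian sets of positive roots, adapted to the real form; it is most cleanly obtained by identifying maximal candidates for $\mathfrak{a}$ with nilradicals of parabolic subalgebras of $\mathfrak{g}^\C$ of specific Lie-theoretic type, and noting that such a nilradical achieves dimension $\frac{1}{2}\dim \mathbf{X}$ only when it is itself abelian, a condition characterizing Hermitian parabolics.

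For the equality case $\dim_\C \mathfrak{a} = \frac{1}{2}\dim \mathbf{X}$, the bound analysis forces $\mathfrak{a}$ to consist of nilpotent elements (mixing in semisimple pieces is shown to cost dimension for any irreducible factor other than $\H^2$) and $\mathfrak{a} \cap \bar{\mathfrak{a}} = 0$; therefore $\mathfrak{p}^\C = \mathfrak{a} \oplus \bar{\mathfrak{a}}$. Declaring $\mathfrak{a}$ to be the $+i$-eigenspace defines an $\R$-linear endomorphism $J$ of $\mathfrak{p} \cong \upT_o \mathbf{X}$ with $J^2 = -\mathrm{id}$; the condition $[\mathfrak{a}, \mathfrak{a}] = 0$ is precisely the integrability of the induced almost complex structure on $G/K$, and the required $\operatorname{ad}(\mathfrak{k})$-invariance of $\mathfrak{a}$ (which extends $J$ to a $G$-invariant tensor) follows from the characterization of $\mathfrak{a}$ as the nilradical of a Hermitian parabolic, preserved by its Levi subgroup containing $K^\C$. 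This exhibits $\mathbf{X}$ as Hermitian symmetric with $\mathfrak{a} = \mathfrak{p}^{1,0}$. The $\H^2$ hypothesis is essential for the final uniqueness statement because for $\mathbf{X} = \H^2$ the space $\mathfrak{p}^\C$ is $2$-complex-dimensional and every complex line is a $1$-dimensional abelian subalgebra achieving the bound, yet only the two lines spanned by $Y - iJY$ for the two invariant complex structures $\pm J$ coincide with $\mathfrak{p}^{1,0}$.
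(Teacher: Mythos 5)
Your endgame coincides with the reduction the paper actually gives: once $\mathfrak{a}$ has no nonzero semisimple elements, $\mathfrak{a}\cap\bar{\mathfrak{a}}=0$ forces the bound, and at equality $\mathfrak{p}^\C=\mathfrak{a}\oplus\bar{\mathfrak{a}}$ defines the complex structure; your remark on why $\H^2$ must be excluded from the last assertion is also correct. Your preliminary reductions are sound as well (the Jordan components of $X\in\mathfrak{p}^\C$ stay in $\mathfrak{p}^\C$ because $\theta$ commutes with the abstract Jordan decomposition, the enlarged algebra is still abelian, and $\dim_\C\mathfrak{a}_s\leqslant\operatorname{rank}(\mathbf{X})$ by Kostant--Rallis, matching the discussion in \autoref{subsec:AbelianSubalgebras}). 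The problem is that the heart of the theorem is exactly the step you label ``the main obstacle'': the combined bound, and the strict inequality whenever $\mathfrak{a}_s\neq 0$ and there is no $\H^2$ factor. This is the ``technical lemma'' that the paper explicitly defers to \cite{MR1019964}, and your proposed route to it does not go through.

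Concretely: (i) an abelian nilpotent subalgebra of $\mathfrak{n}^+$ need not be a sum of root spaces (consider the line through $E_{12}+E_{23}$ in $\mathfrak{sl}_3$), so the abelian condition is not ``no two roots appearing in $\mathfrak{a}_n$ sum to a root''; moreover Borel's theorem only provides a $G^\C$-conjugation into a Borel, not a conjugation by $Z_{K^\C}(\mathfrak{a}_s)$ preserving $\mathfrak{p}^\C$, and for a $\theta$-stable Cartan the individual root spaces are generally neither in $\mathfrak{k}^\C$ nor in $\mathfrak{p}^\C$, so $\mathfrak{n}^+\cap\mathfrak{p}^\C$ is not itself a sum of root spaces. (ii) Kostant's and Malcev's theorems bound abelian ideals of a Borel, respectively abelian subalgebras of $\mathfrak{g}^\C$; neither yields the bound $\frac{1}{2}\dim\mathbf{X}$ for abelian subalgebras of $\mathfrak{p}^\C$, and maximal abelian subalgebras of $\mathfrak{p}^\C$ are not nilradicals of parabolics of $\mathfrak{g}^\C$ in any evident way, so the assertion that equality ``characterizes Hermitian parabolics'' is not a proof. (iii) Deriving the invariance $[\mathfrak{k},\mathfrak{a}]\subseteq\mathfrak{a}$ from ``$\mathfrak{a}$ is the nilradical of a parabolic whose Levi contains $K^\C$'' is circular: knowing that $\mathfrak{k}^\C\oplus\mathfrak{a}$ is a subalgebra is equivalent to the invariance being proved. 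A non-circular argument in the equality case: all elements of $\mathfrak{a}$ are nilpotent, so $\mathfrak{a}$ and $\bar{\mathfrak{a}}$ are totally isotropic for $B^\C$ and are dually paired; since $[\mathfrak{p},\mathfrak{p}]=\mathfrak{k}$ one gets $\mathfrak{k}^\C=[\mathfrak{a},\bar{\mathfrak{a}}]$, and then for $X,Z,W\in\mathfrak{a}$, $Y\in\mathfrak{a}$, the Jacobi identity gives $[[X,\bar{Y}],Z]=[X,[\bar{Y},Z]]$ while $B^\C([[X,\bar{Y}],Z],W)=-B^\C([\bar{Y},Z],[X,W])=0$, so the $\bar{\mathfrak{a}}$-component vanishes and $[\mathfrak{k}^\C,\mathfrak{a}]\subseteq\mathfrak{a}$. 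As written, the dimension bound --- the main content of the statement --- remains unproved in your proposal.
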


\begin{proof}
 We only sketch the proof and refer to \cite{MR1019964} for details. By \autoref{lem:AbelianHn}, the results holds for $\H^2$. If $\mathbf{X}$ has no $\H^2$ factor, a technical lemma ensures that if $\mathfrak{a}$ contains a nonzero semisimple element, then $\dim_\C \mathfrak{a} < \frac{1}{2} \dim \mathbf{X}$. 
Thus we may assume that $\mathfrak{a}$ has no semisimple elements. Since real elements are semisimple, we have $\mathfrak{a} \cap \bar{\mathfrak{a}} = 0$, and if $\dim_\C \mathfrak{a} = \frac{1}{2} \dim_\C \mathfrak{p}^\C$ then $\mathfrak{p}^\C = \mathfrak{a} \oplus \bar{\mathfrak{a}}$. This means
that $\mathfrak{a} = \mathfrak{p}^{1,0}$ for some linear complex structure on $\mathfrak{p}$. One readily shows that it is compatible with the Killing form and invariant, \ie{} $[\mathfrak{k}, \mathfrak{a}] \subseteq \mathfrak{a}$. 
\end{proof}

\subsection{Application of the Siu--Sampson theorem}
\label{subsec:SymmSiu}

Let us go back to the setting of the theorem of Siu and Sampson: let $f \colon M \to N$ be a harmonic map where $M$ is compact Kähler and $N$ is Riemannian. 
We now specialize to the case where $N$ is locally symmetric of noncompact type, \ie{} locally isometric to a symmetric space of noncompact type $\mathbf{X} = G/K$.

\begin{remark}
If $N$ is additionally assumed complete, its universal cover $\tilde{N}$ must be isometric to $\mathbf{X} = G/K$. 
In other words, $N = \Gamma \setminus \mathbf{X}$ where  $\Gamma < G$ is a discrete subgroup acting freely and properly on $\mathbf{X}$---it is enough that $\Gamma < G$ is discrete and torsion-free\footnote{It is a general fact that any isometric action of a discrete group
in a proper metric space is proper \cite[Thm.\ 5.3.5]{MR2249478}.}.
When $N$ is not assumed complete, we merely have a \emph{developing map} $\tilde{N} \to \mathbf{X}$ which is a local isometry, 
but neither surjective nor a covering map in general\footnote{This is the language of $(X,G)$-structures, refer to \eg{} \cite{MR2827816} for more developments.}.
\end{remark}

Since $\mathbf{X}$, hence $N$, has very strongly nonpositive curvature (see \autoref{cor:CurvSymmSpace}), we may apply \autoref{thm:SiuSampson} and conclude that $f$ is pluriharmonic, and moreover $R^N\left(X, Y, \bar{X}, \bar{Y}\right) = 0$ for all $x\in M$ and $X, Y \in \upd f(\upT_x^{1,0} M)$. By \autoref{cor:CurvSymmSpace} \ref{cor:CurvSymmSpaceii}, it follows that $R^N(X, Y) = 0$. 

For $x \in M$, denote $\mathfrak{a}_x \coloneqq \upd f\left(\upT_x^{1,0} M\right) \subseteq \upT_{f(x)} N \otimes \C$.
Note that $\operatorname{rank}_x(f) = \dim_\R \upd f\left(\upT_x M\right) = \dim_\C (\mathfrak{a}_x + \bar{\mathfrak{a}}_x)$, therefore $\operatorname{rank}_x(f) \leqslant 2 \dim_\C \mathfrak{a}_x$.
Identifying $\upT_{f(x)} N \approx \mathfrak{p}$, we have $\mathfrak{a}_x \subseteq \mathfrak{p}^\C$, 
and the fact  that $R^N(X, Y) = 0$ for all $X, Y \in \upd f(\upT_x^{1,0} M)$ translates to $[\mathfrak{a}_x, \mathfrak{a}_x] = 0$, \ie{} $\mathfrak{a}_x$ is an abelian subalgebra of $\mathfrak{p}^\C$. Let us collect these findings:
\begin{lemma} \label{lem:AbelianSub}
Let $f \colon M \to \mathbf{X}$ be a harmonic map, where $M$ is compact Kähler and $\mathbf{X} = G/K$ is (locally) symmetric  of noncompact type. If $f$ is harmonic, then $f$ is pluriharmonic. Moreover, for all $x \in M$, $\mathfrak{a}_x \coloneqq \upd f(\upT_x^{1,0} M)$ is an abelian subalgebra of $\mathfrak{p}^\C$.
\end{lemma}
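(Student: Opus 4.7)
The plan is to invoke the Siu--Sampson theorem directly and then translate its curvature conclusion into a bracket condition using the algebraic description of the curvature tensor of $\mathbf{X}$.

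First, I would verify that \autoref{thm:SiuSampson} applies. By \autoref{cor:CurvSymmSpace}~\ref{cor:CurvSymmSpacei}, $\mathbf{X}$ has very strongly nonpositive curvature; since $N$ is locally isometric to $\mathbf{X}$, the same holds for $N$, and in particular $N$ has nonpositive Hermitian sectional curvature. With $M$ compact Kähler and $f$ harmonic, the Siu--Sampson theorem immediately delivers the first conclusion --- $f$ is pluriharmonic --- together with the identity $R^N(X, Y, \bar X, \bar Y) = 0$ for every $x \in M$ and every $X, Y \in \mathfrak{a}_x = \upd f(\upT_x^{1,0} M)$.

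Next, I would upgrade this scalar vanishing to vanishing of the whole endomorphism $R^N(X, Y)$ on $\upT_{f(x)} N \otimes \C$ by applying \autoref{cor:CurvSymmSpace}~\ref{cor:CurvSymmSpaceii}: this sharper implication is the special feature of symmetric spaces of noncompact type that is crucial here and fails for a general Riemannian target. I would then translate $R^N(X, Y) = 0$ into $[X, Y] = 0$ using \autoref{prop:CurvSymmSpace}. Under the identification $\upT_{f(x)} N \approx \mathfrak{p}$, the identity $R^N(X, Y) Z = -[[X, Y], Z]$ extends $\C$-linearly to $\mathfrak{p}^\C$, so $R^N(X, Y) = 0$ is equivalent to $\operatorname{ad}([X, Y])$ vanishing on $\mathfrak{p}^\C$, where $[X, Y] \in [\mathfrak{p}^\C, \mathfrak{p}^\C] \subseteq \mathfrak{k}^\C$.

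The main obstacle is therefore the final faithfulness step: showing that an element $Z \in \mathfrak{k}^\C$ with $[Z, \mathfrak{p}^\C] = 0$ must be zero. Writing $Z = U + i V$ with $U, V \in \mathfrak{k}$, it suffices to handle the real case. The subspace $\mathfrak{z} = \{W \in \mathfrak{k} : [W, \mathfrak{p}] = 0\}$ is stable under $\operatorname{ad}(\mathfrak{k})$ by the Jacobi identity (using $[\mathfrak{k}, \mathfrak{p}] \subseteq \mathfrak{p}$) and trivially under $\operatorname{ad}(\mathfrak{p})$, so $\mathfrak{z}$ is an ideal of $\mathfrak{g}$; being contained in $\mathfrak{k}$, it is a compact ideal of $\mathfrak{g}$ and therefore vanishes since $G$ has no compact factors. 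Applied to $U$ and $V$, this yields $[X, Y] = 0$, so $\mathfrak{a}_x$ is abelian in $\mathfrak{p}^\C$. Apart from this faithfulness point --- which is precisely where the no-compact-factor hypothesis enters --- the proof is essentially a matter of assembling results proved earlier in the paper.
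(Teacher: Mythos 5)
Your proof is correct and follows the same route as the paper, which establishes this lemma in the discussion immediately preceding its statement: feed \autoref{cor:CurvSymmSpace} into \autoref{thm:SiuSampson}, then read off $[\mathfrak{a}_x,\mathfrak{a}_x]=0$ from $R(X,Y)Z=-[[X,Y],Z]$. The only divergence is in the final algebraic step: the paper's justification of \autoref{cor:CurvSymmSpace}~\ref{cor:CurvSymmSpaceii} is the identity $R(X,Y,\bar{X},\bar{Y})=\pm B^\C([X,Y],\overline{[X,Y]})$ together with the negative definiteness of $B$ on $\mathfrak{k}$, so the scalar vanishing already forces $[X,Y]=0$ directly (and $R^N(X,Y)=0$ is then a consequence rather than an intermediate step). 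Your detour --- passing first to $R^N(X,Y)=0$ and then proving that $\{W\in\mathfrak{k} : [W,\mathfrak{p}]=0\}$ is an ideal contained in $\mathfrak{k}$, hence a compact ideal, hence zero --- is correct and the faithfulness argument is sound, but it is logically superfluous once one unwinds where the scalar identity comes from.
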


We can now reinvest the analysis of the dimension of such subalgebras performed in \autoref{subsec:AbelianSubalgebras}.
Let us start with the case where $N$ is locally isometric to $\mathbf{X} = \H^n$. Equivalently, $N$ has constant sectional curvature $-1$.
We recover \autoref{thm:SampsonHn} as an immediate consequence of \autoref{lem:AbelianHn}:
\begin{theorem}[Sampson \cite{MR833809}] \label{thm:SampsonHn2}
Let $f \colon M \to N$ be a harmonic map where $M$ is compact Kähler and $N$ is hyperbolic. Then $\operatorname{rank}(f) \leqslant 2$ everywhere. 
\end{theorem}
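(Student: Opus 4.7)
The plan is to deduce the theorem directly from the two lemmas that have just been proven, using the pointwise nature of the rank as the bridge.

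First I would observe that hyperbolicity of $N$ means $N$ is locally isometric to the rank-one symmetric space $\mathbf{X} = \H^n = \mathrm{SO}^+(n,1)/\mathrm{SO}(n)$, which is of noncompact type. The rank of $f$ at a point $x$ is a purely local invariant, and so are the curvature tensors involved in the Siu--Sampson theorem, so nothing is lost by treating $f$ locally as a map into $\mathbf{X}$; in particular the Cartan decomposition $\mathfrak{g} = \mathfrak{k} \oplus \mathfrak{p}$ can be used to identify $\upT_{f(x)} N \otimes \C$ with $\mathfrak{p}^\C$.

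Next I would invoke \autoref{lem:AbelianSub}: since $M$ is compact Kähler, $\mathbf{X}$ is symmetric of noncompact type, and $f$ is harmonic, the theorem of Siu and Sampson applies. It yields pluriharmonicity of $f$ together with the crucial conclusion that, under the identification above, $\mathfrak{a}_x \coloneqq \upd f(\upT_x^{1,0} M)$ is an abelian complex subspace of $\mathfrak{p}^\C$ at every point $x \in M$.

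Now \autoref{lem:AbelianHn} is precisely the statement that any abelian subspace of $\mathfrak{p}^\C$ for $\mathfrak{g} = \mathfrak{so}(n,1)$ has complex dimension at most $1$. Hence $\dim_\C \mathfrak{a}_x \leqslant 1$ for every $x$. To finish, I would use the elementary relation
\begin{equation}
\operatorname{rank}_x(f) = \dim_\R \upd f(\upT_x M) = \dim_\C \left(\mathfrak{a}_x + \bar{\mathfrak{a}}_x\right) \leqslant 2 \dim_\C \mathfrak{a}_x
\end{equation}
pointed out just before \autoref{lem:AbelianSub}, giving $\operatorname{rank}_x(f) \leqslant 2$ as required. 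There is no real obstacle here; all the geometric and analytic work has been absorbed into \autoref{lem:AbelianSub} (via Siu--Sampson) and the algebraic content into \autoref{lem:AbelianHn}, so the proof is essentially a one-line synthesis of the two.
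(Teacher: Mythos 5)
Your proof is correct and is exactly the argument the paper intends: \autoref{lem:AbelianSub} makes $\mathfrak{a}_x = \upd f(\upT_x^{1,0}M)$ an abelian subalgebra of $\mathfrak{p}^\C$, \autoref{lem:AbelianHn} bounds its dimension by $1$ for $\mathfrak{g} = \mathfrak{so}(n,1)$, and the inequality $\operatorname{rank}_x(f) \leqslant 2\dim_\C \mathfrak{a}_x$ finishes. The paper presents \autoref{thm:SampsonHn2} as an immediate consequence of precisely these two lemmas, so there is nothing to add.
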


\begin{remark}
 \autoref{thm:SampsonHn2} was refined by Carlson--Toledo \cite{MR1019964}: they proved that either $f$ maps to a closed geodesic, or factorizes as $f = \psi \circ \varphi$,
 where $\varphi \colon M \to C$ is a holomorphic map to a Riemann surface $C$ and $\psi \colon C \to N$ is harmonic. 
\end{remark}

Next, as a consequence of \autoref{thm:CarlsonToledo} we obtain:
\begin{theorem} \label{thm:SiuHarmonic}
Let $f \colon M \to N$ be a harmonic map where $M$ is compact Kähler and $N$ is locally symmetric of noncompact type modelled on $\mathbf{X} = G/K$.
If $\operatorname{rank}_x(f) = \dim N$ for some $x\in N$, then $\mathbf{X}$ is Hermitian symmetric. 
Moreover, if $\mathbf{X}$ has no $\H^2$ factor, $f$ is holomorphic for some invariant complex structure on $\mathbf{X}$.
\end{theorem}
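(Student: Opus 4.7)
The plan is to apply the algebraic results of \autoref{subsec:AbelianSubalgebras} pointwise at a point of maximal rank and then propagate the conclusion to all of $M$ by continuity and unique continuation.

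First, by \autoref{lem:AbelianSub}, $f$ is pluriharmonic and for each $x \in M$ the subspace $\mathfrak{a}_x \coloneqq \upd f(\upT_x^{1,0} M)$ is an abelian subalgebra of $\mathfrak{p}^\C$. At a point $x_0$ where $\operatorname{rank}_{x_0}(f) = \dim N$, the real image $\upd f(\upT_{x_0} M)$ spans all of $\upT_{f(x_0)} N \approx \mathfrak{p}$, so its complexification satisfies $\mathfrak{a}_{x_0} + \overline{\mathfrak{a}_{x_0}} = \mathfrak{p}^\C$, forcing $\dim_\C \mathfrak{a}_{x_0} \geqslant \frac{1}{2} \dim \mathbf{X}$. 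Combined with the upper bound in \autoref{thm:CarlsonToledo}, equality holds, and the equality case of that theorem yields the first conclusion: $\mathbf{X}$ is Hermitian symmetric.

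Now assume $\mathbf{X}$ has no $\H^2$ factor. The ``moreover'' part of \autoref{thm:CarlsonToledo} gives $\mathfrak{a}_{x_0} = \mathfrak{p}^{1,0}$ for some $G$-invariant complex structure $J$ on $\mathbf{X}$, i.e.\ $\upd f_{x_0}$ intertwines $J_M$ with $J$. By lower semicontinuity of the rank, $\operatorname{rank}_x(f) = \dim N$ on an open neighborhood $U$ of $x_0$, and the same argument produces for each $y \in U$ a $G$-invariant complex structure $J_y$ with $\mathfrak{a}_y$ equal to the $(1,0)$-part of $\mathfrak{p}^\C$ with respect to $J_y$. Since $\mathbf{X}$ admits only finitely many $G$-invariant complex structures (a sign choice on each irreducible Hermitian factor) and $y \mapsto \mathfrak{a}_y$ varies continuously, $y \mapsto J_y$ is locally constant and equals $J$ on the connected component of $x_0$ in $U$. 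Thus $f$ is $J$-holomorphic on a nonempty open subset of $M$.

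It remains to propagate $J$-holomorphicity to all of $M$. Standard elliptic regularity implies that harmonic maps between real analytic Riemannian manifolds are real analytic; using \autoref{prop:PropertiesPluriharmonic} to freely replace the Kähler metric on $M$ by a compatible real analytic one if necessary, $f$ is real analytic. The Cauchy--Riemann condition $\upd f \circ J_M = J \circ \upd f$ is thus a real analytic equation on the connected manifold $M$ vanishing on a nonempty open set, hence vanishing everywhere. The main obstacle I anticipate is this final propagation step: the reduction to the real analytic case requires a little care, as one must know that the underlying complex manifold $M$ admits a compatible real analytic Kähler metric. Once real analyticity is in hand, however, the global conclusion follows mechanically from connectedness, so the crux of the argument really lies in the pointwise algebraic computation supplied by \autoref{thm:CarlsonToledo}.
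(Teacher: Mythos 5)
Your proposal is correct and its first half coincides exactly with the paper's argument: pluriharmonicity and the abelian subalgebra $\mathfrak{a}_x$ come from \autoref{lem:AbelianSub}, the maximal-rank hypothesis forces $\dim_\C \mathfrak{a}_{x_0} \geqslant \frac{1}{2}\dim \mathbf{X}$, and \autoref{thm:CarlsonToledo} delivers both conclusions at the point $x_0$. Where you genuinely diverge is the propagation step. The paper observes that, by \autoref{prop:CharacterizationsPluriharmonic} and the Koszul--Malgrange holomorphic structure on $f^*\upT N \otimes \C$ induced by $\delbar_\nabla$ (cf.\ \autoref{lem:SSNAH} and \autoref{subsec:NAH}), $\upd^{1,0} f$ is a \emph{holomorphic} section of a holomorphic bundle; hence the locus where it drops rank is a proper analytic subvariety, the maximal-rank set $U$ is open, dense and connected, and $\delbar f$ vanishes everywhere by mere continuity. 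You instead get only an open set from lower semicontinuity of the rank and then invoke real analyticity plus unique continuation. That route works, and the obstacle you flag honestly --- the existence of a compatible real analytic Kähler metric --- dissolves if you argue locally: pluriharmonicity makes $f$ harmonic for the flat Kähler metric in any holomorphic coordinate chart (\autoref{prop:PropertiesPluriharmonic}), and Morrey's analyticity theorem for elliptic systems with real analytic coefficients then gives real analyticity of $f$, since $\mathbf{X}=G/K$ is real analytic. Your version costs this extra regularity input but buys nothing the paper's holomorphicity argument does not already provide; on the other hand, your careful treatment of why the invariant complex structure $J_y$ is locally constant on $U$ (finiteness of the set of invariant complex structures plus continuity of $y \mapsto \mathfrak{a}_y$) is a point the paper glosses over and is worth retaining in either approach.
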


\begin{proof}
Let $\mathfrak{a}_x = \upd f\left(\upT_x^{1,0} M\right) = \Imag(\upd_x^{1,0} f)$. By the previous discussion, if $f$ is harmonic then it is pluriharmonic and $\mathfrak{a}_x$ is abelian. Moreover, since $\operatorname{rank}_x(f) \leqslant 2 \dim_\C \mathfrak{a}_x$, if
$\operatorname{rank}_x(f) = \dim N$ then $\dim_\C \mathfrak{a}_x \geqslant \frac{1}{2} \dim N$. In this case \autoref{thm:CarlsonToledo} says that $\mathbf{X}$ is Hermitian symmetric, and if it has no $\H^2$ factor then $\mathfrak{a}_x = \mathfrak{p}^{1,0}$ for some invariant complex structure on $\mathbf{X}$. In other words, $f$ is holomorphic at $x$. 

Let $U = \{x \in M ~\colon \dim_{\C} \mathfrak{a}_x = \frac{1}{2} \dim N\}$.
By assumption, $U$ is nonempty. 
By \autoref{prop:CharacterizationsPluriharmonic}, $\upd^{1,0} f$ is a holomorphic section of $f^* \upT N \otimes \C$, which is a holomorphic vector bundle as we shall see in \autoref{subsec:NAH}. 
Therefore the complement of $U$, which consists of the points $x \in M$ where $\upd^{1,0} f$ does not have maximal rank, is the vanishing locus of a holomorphic function, 
so it is a proper subvariety. In particular, $U$ is dense. We know that $f$ is holomorphic on $U$, \ie{} $\delbar f$ vanishes on $U$, hence everywhere.
\end{proof}

\begin{remark}
\autoref{thm:SiuHarmonic} can be refined by trading the assumption $\operatorname{rank}_x(f) = \dim N$ for $\operatorname{rank}_x(f) > 2 p(\mathbf{X})$, where
$p(\mathbf{X})$ is the maximal dimension of a subalgebra of $\mathfrak{p}^\C$ not contained in $\mathfrak{p}^{1,0}$.
Siu \cite[Thm.\ 6.7]{MR658472} calculated $p(\mathbf{X})$ when $\mathbf{X}$ is Hermitian, Carlson--Toledo \cite{MR1214704} for $\mathbf{X}$ non-Hermitian symmetric of classical type, 
and Carlson--Hern\'{a}ndez \cite{MR1129347} for the Cayley hyperbolic plane.
\end{remark}

As in \autoref{subsec:FirstRigidity}, we can combine this result with the theorem of Eells--Sampson (\autoref{thm:EellsSampson}) to derive a second version of the strong rigidity theorem of Siu (compare with \autoref{thm:SiuRigidity1}):
\begin{theorem} \label{thm:SiuRigidity2}
 Let $f \colon M \to N$ be a homotopy equivalence where $M$ is compact Kähler and $N = \Gamma \setminus \mathbf{X}$ compact locally symmetric, where 
 $\mathbf{X} = G/K$ is a Hermitian symmetric space of noncompact type with no $\H^2$ factor.
 Then $f$ is homotopic to a biholomorphism for some invariant complex structure on $\mathbf{X}$.
\end{theorem}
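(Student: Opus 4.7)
The plan is to follow the same strategy as Corollary~\ref{cor:SiuRigidity2}, substituting Theorem~\ref{thm:SiuHarmonic} for Theorem~\ref{thm:SiuRigidity1}: produce a harmonic representative of $f$ in its homotopy class, upgrade harmonicity to holomorphicity, and then promote holomorphicity to bijectivity by a topological/Kähler argument.

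First I would invoke the Eells--Sampson existence theorem (Theorem~\ref{thm:EellsSampson}) to replace $f$ by a smooth harmonic map in its homotopy class. This is legitimate because $N$ has nonpositive sectional curvature, being locally modelled on a symmetric space of noncompact type (Corollary~\ref{cor:CurvSymmSpace}~\ref{cor:CurvSymmSpacei}). Since $f$ is a homotopy equivalence between closed oriented manifolds, $\dim M = \dim N$ and $\deg f = \pm 1$; in particular $f$ is surjective and, by Sard, attains maximal rank $\operatorname{rank}_x(f) = \dim N$ at some $x \in M$. Theorem~\ref{thm:SiuHarmonic} then applies---crucially using the hypothesis that $\mathbf{X}$ has no $\H^2$ factor---to produce a $G$-invariant complex structure $J$ on $\mathbf{X}$ for which $f$ is holomorphic. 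Because $J$ is $G$-invariant and $\Gamma \subseteq G$, it descends to a complex structure on $N = \Gamma \setminus \mathbf{X}$, with respect to which $f \colon M \to N$ becomes a holomorphic map between compact complex manifolds.

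To conclude that $f$ is a biholomorphism I would argue as at the end of the proof of Corollary~\ref{cor:SiuRigidity2}. Being holomorphic, $f$ preserves orientation, so $\deg f = +1$. If $f$ were not injective, then (being a degree-one holomorphic map between compact complex manifolds of the same dimension) it would have to collapse an irreducible positive-dimensional complex analytic subvariety $Z \subseteq M$ to a point in $N$. Writing $k = \dim_{\C} Z$, we would have $\int_Z \omega_M^k > 0$ because $\omega_M$ is Kähler and $\omega_M^k / k!$ restricts to the Riemannian volume form on the smooth locus of $Z$. On the other hand $f_*[Z] = 0$ in $H_{2k}(N,\R)$, and since $f$ is a homotopy equivalence the map $f^* \colon H^{2k}(N,\R) \to H^{2k}(M,\R)$ is an isomorphism, so we may write $[\omega_M^k] = f^*\beta$ and compute
\begin{equation}
0 < \int_Z \omega_M^k = \langle f^*\beta, [Z] \rangle = \langle \beta, f_*[Z]\rangle = 0,
\end{equation}
a contradiction.

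The main obstacle I anticipate is precisely this injectivity step: the reduction from ``$f$ is not injective'' to ``$f$ collapses a positive-dimensional subvariety'' is intuitively clear but requires some care in the complex analytic category, for instance via Remmert's proper mapping theorem or the structure theory of bimeromorphic morphisms. One should also check carefully that the open set of maximal rank is connected and dense enough that the $J$-holomorphicity of $f$, established pointwise from Theorem~\ref{thm:CarlsonToledo}, really extends to all of $M$; this is guaranteed because $\overline{\partial}_J f$ is holomorphic in $\operatorname{Hom}(\upT^{0,1} M, f^*\upT N)$ in a suitable sense and vanishes on a dense open set, hence everywhere. All remaining ingredients are direct transpositions of the proofs of Corollary~\ref{cor:SiuRigidity2} and Theorem~\ref{thm:SiuHarmonic}.
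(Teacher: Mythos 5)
Your proposal is correct and follows essentially the same route as the paper: Eells--Sampson to find a harmonic representative, a degree argument to get full rank somewhere, \autoref{thm:SiuHarmonic} to upgrade to holomorphicity for an invariant complex structure, and the Kähler-class argument from the proof of \autoref{cor:SiuRigidity2} to rule out collapsed subvarieties. Your write-up in fact supplies details the paper leaves implicit (the descent of $J$ to $N = \Gamma \setminus \mathbf{X}$ and the density argument for extending holomorphicity from the full-rank locus), but these are exactly the points addressed in the proof of \autoref{thm:SiuHarmonic} itself, so the arguments coincide.
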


\begin{proof}
By \autoref{thm:EellsSampson}, we may assume $f$ harmonic. By \autoref{thm:SiuHarmonic}, $\mathbf{X}$ is Hermitian symmetric and 
$f$ is holomorphic if $X$ has no $\H^2$ factor. It remains to prove that $f$ is bijective: this is the same topological argument as 
in the proof of \autoref{cor:SiuRigidity2}.
\end{proof}

\begin{remark}
 The assumption that $\mathbf{X}$ has no $\H^2$ factor is essential in \autoref{thm:SiuRigidity2}. Indeed, consider the case $\mathbf{X} = \H^2$.
 Let $X$ and $Y$ be any two closed Riemann surfaces having same genus $g > 1$. The Poincaré metric\footnote{This is the unique conformal metric of constant curvature $-1$, 
 given by the celebrated \emph{uniformization theorem}.} on $Y$ gives it a hyperbolic structure. Since $X$ and $Y$ have the same topology, there exists a homeomorphism $f \colon X \to Y$.
 If \autoref{thm:SiuRigidity2} applied, $f$ would be homotopic to a biholomorphism. However, in general, $X$ and $Y$ are not biholomorphic: this is the starting point
 of \emph{Teichmüller theory}, which says that the moduli space of complex structures is $(3g-3)$-complex dimensional.
\end{remark}

\begin{remark}
When $\mathbf{X}$ is of compact type, pluriharmonic maps $f \colon M \to N$ are still $\pm$-holomorphic,
however harmonic maps are not always pluriharmonic. Nevertheless, interesting results can be obtained under more or less general restrictions: refer to 
\cite{MR960100, MR1082880} for more details.
\end{remark}

\subsection{Relation to Mostow rigidity}
\label{subsec:Mostow}

We now explain the relation between Siu's strong rigidity and the famous \emph{Mostow rigidity}. In its original form 
\cite{MR236383}, the theorem of Mostow says that any closed hyperbolic manifold $M = \Gamma \setminus \H^n$ of dimension $n>2$ is \emph{strongly rigid}, in the sense that 
$M$ is isometric to any other closed hyperbolic manifold homotopic to it. Algebraically, this theorem translates to the fact any two
uniform lattices\footnote{A \emph{lattice} $\Gamma < G$ in a Lie group is a discrete subgroup such that $\Gamma \setminus G$ has finite volume. It is called  \emph{uniform}
if  $\Gamma \setminus G$ is compact. Prasad \cite{MR385005} extended Mostow rigidity to lattices that are not assumed uniform.} $\Gamma_1, \Gamma_2 < G = \O^+(n,1)$ that are isomorphic as groups are conjugate in $G$.
Mostow then generalized his theorem in \cite{MR0385004} to any closed locally symmetric manifold of noncompact type with no $2$-dimensional hyperbolic factor.
This is the geometric form of Mostow's theorem; let us record the equivalent algebraic form:
\begin{theorem}[Mostow rigidity] \label{thm:MostowAlgebraic}
Let $G$ be a semisimple Lie group with trivial center without compact factors nor any $\PSL(2,\R)$ factor. If $\Gamma_1, \Gamma_2 < G$ are uniform lattices that are isomorphic as groups, then they are conjugate subgroups of $G$.
\end{theorem}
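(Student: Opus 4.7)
The plan is to deduce Mostow rigidity from the harmonic-map machinery already assembled, by first translating the algebraic statement into a geometric one and then invoking Eells--Sampson existence (\autoref{thm:EellsSampson}) together with Siu's strong rigidity (\autoref{thm:SiuRigidity2}). This route will handle the Hermitian symmetric case cleanly; the non-Hermitian case is a genuine obstacle for this technique and would require tools beyond what is developed in the excerpt.

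First I would set $N_i \coloneqq \Gamma_i \setminus \mathbf{X}$, where $\mathbf{X} = G/K$. These are closed locally symmetric manifolds of noncompact type, and since $\mathbf{X}$ is contractible (being CAT(0)), each $N_i$ is a $K(\Gamma_i, 1)$. Any isomorphism $\varphi \colon \Gamma_1 \to \Gamma_2$ is therefore induced by a homotopy equivalence $f_0 \colon N_1 \to N_2$; conversely, any isometry $N_1 \to N_2$ homotopic to $f_0$ lifts to a $\varphi$-equivariant isometry $\mathbf{X} \to \mathbf{X}$, i.e.\ an element of $G$ conjugating $\Gamma_1$ onto $\Gamma_2$. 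Thus the algebraic conclusion reduces to a geometric rigidity statement. Applying \autoref{thm:EellsSampson} (using that $N_2$ has nonpositive sectional curvature by \autoref{cor:CurvSymmSpace}\ref{cor:CurvSymmSpacei}) replaces $f_0$ by a smooth harmonic map $f \colon N_1 \to N_2$ in the same homotopy class.

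Assume $\mathbf{X}$ is Hermitian symmetric. Then each $N_i$ is compact Kähler, inheriting the $G$-invariant Kähler form on $\mathbf{X}$, and the hypothesis that $G$ has no $\PSL(2,\R)$ factor translates to $\mathbf{X}$ having no $\H^2$ factor. Siu's strong rigidity (\autoref{thm:SiuRigidity2}) then applies: $f$ is homotopic to a biholomorphism $h \colon N_1 \to N_2$ for some invariant complex structure on $\mathbf{X}$. Lifting $h$ to the universal cover yields a $\varphi$-equivariant biholomorphism $\tilde h \colon \mathbf{X} \to \mathbf{X}$. To close the loop I would invoke that, via the Harish--Chandra embedding, $\mathbf{X}$ is realized as a bounded symmetric domain and its Bergman metric is intrinsic to the complex structure; hence every biholomorphism of $\mathbf{X}$ is an isometry. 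Thus $\tilde h \in G$, and reading off the equivariance delivers the required conjugation of $\Gamma_1$ onto $\Gamma_2$.

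The main obstacle is the non-Hermitian case (for instance $\mathbf{X} = \SL(n,\R)/\SO(n)$ for $n \geq 3$, or real hyperbolic space $\H^n$ for $n \geq 3$). There $N_1$ is only Riemannian, not Kähler, so Siu's $\partial\bar\partial$-trick does not engage and the harmonic map produced by Eells--Sampson need not be pluriharmonic in any useful sense. Handling these cases requires genuinely different inputs --- Mostow's original dynamical argument on the visual boundary $\partial_\infty \mathbf{X}$ for the remaining rank-one spaces, or Corlette's archimedean superrigidity (also a harmonic-map argument, but with a very different flavor) or Margulis superrigidity in higher rank --- which fall outside the Siu--Sampson framework developed in the excerpt.
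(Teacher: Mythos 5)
The paper does not actually prove this theorem --- it is stated as Mostow's result \cite{MR0385004} --- and only sketches, immediately after the statement, how the Hermitian case follows from Siu's strong rigidity (\autoref{thm:SiuRigidity2}) together with the Bergman-metric argument that biholomorphisms of a bounded symmetric domain are isometries; your proposal reproduces exactly that deduction. Your honest acknowledgment that the non-Hermitian case lies outside the Siu--Sampson framework and requires Mostow's boundary dynamics or superrigidity-type arguments matches the paper's own framing, so your treatment is correct and coincides with the paper's for the portion the paper actually argues.
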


Siu's strong rigidity theorem, version \autoref{thm:SiuRigidity2}, may be seen as a generalization of Mostow rigidity when $G$ is Hermitian.
Indeed, assume both $M = \Gamma_1 \setminus \mathbf{X}$ and $N = \Gamma_2 \setminus \mathbf{X}$ are closed manifolds locally isometric to $\mathbf{X} = G/K$. For simplicity, assume $G$ is simple, \ie{} $\mathbf{X}$ is irreducible. If $f \colon M \to N$ is a homotopy equivalence, then by \autoref{thm:SiuRigidity2} $f$ is homotopic to a $\pm$-biholomorphism. Lift $f$ to a $(\Gamma_1, \Gamma_2)$-equivariant biholomorphism $\tilde{f} \colon \mathbf{X} \to \mathbf{X}$. Since $\pm$-biholomorphisms of $\mathbf{X}$ are isometries, as shows the Bergman metric on a bounded symmetric domain, $\tilde{f}$ coincides with an element of $G$.

\medskip

Let us conclude with a few historical comments on the rigidity of lattices. Fore more details, we refer to \cite{MR1168043, MR2090772, MR1321644}.
Before Mostow's results, the \emph{local} rigidity\footnote{The \emph{(local) rigidity} of $\Gamma < G$ means that if $\rho_t \colon \Gamma \to  G$ is a one-parameter family of injective group homomorphisms, 
where $\rho_0$ is the inclusion, such that $\rho_t(\Gamma) < G$ is a uniform lattice for all $t$, then $\rho_t$ is conjugate to $\rho_0$ in $G$.} of lattices in a group $G$ as in \autoref{thm:MostowAlgebraic}
had been proven by Weil \cite{MR137793}, following Calabi and Vesentini \cite{MR111057, MR111058} who first proved it for $G$ Hermitian
using analytic methods similar to the ones presented in this report.
Mostow used different techniques, essentially studying the dynamics of the action of $\Gamma < G$ on the
ideal boundary of $\mathbf{X} = G/K$. Such techniques were further developed by Margulis to prove spectacular refinements of Mostow rigidity known as \emph{superrigidity} and \emph{arithmeticity} \cite{MR0492072}.
On the other hand, following the work of Siu, the harmonic maps approach was successfully used by Mok \cite{MR1081948} to prove superrigidity for Hermitian Lie groups and by Corlette \cite{MR1147961} for $G = \Sp(n,1)$.
Gromov--Schoen \cite{MR1215595} extended Corlette's argument to groups of rank $1$ over local fields by generalizing the notion of harmonic map to singular targets. Finally, Mok--Siu--Yeung \cite{MR1223224}, Jost--Yau \cite{MR1482040}, and Jost--Zuo \cite{MR1617644} generalized superrigidity for higher rank groups and quasiprojective varieties.

\section{Nonabelian Hodge theory}
\label{sec:NAH}

In this final section, we emphasize the key role of the Siu--Sampson theorem for the nonabelian Hodge correspondence from group representations to Higgs bundles. 
Using an extension of the Eells--Sampson theorem due to Corlette, it will be straighforward to construct a Higgs bundle out of a representation by applying the Siu--Sampson theorem: it is essentially a rephrasing of \autoref{lem:AbelianSub}. The reader in a hurry may jump to \autoref{subsubsec:RepToHiggs}, which explains this point, and skip the rest of the section.

In order to motivate this construction, we attempt a brief introduction to the fascinating subject of nonabelian Hodge theory, which is not a trivial task. 
Thankfully, there are many good references: 
I recommend the phenomenal paper of Simpson \cite{MR1179076} and the more introductory resources 
\cite{MR1157844, MR1379330, MR1492538, MR2359489, Bradlow2012, MR3221295, MR3409775, MR3675465, MR3837868, MR3947036, Brantner}.

\subsection{From classical to nonabelian Hodge theory}

\subsubsection{Classical Hodge theory}
\label{subsubsec:ClassicalHodge}

For a thorough treatment of what follows, refer to \eg{} \cite{MR1924513, MR2451566}.

Let $M$ be a complex manifold. Let us denote $\cA^k = \cA^k(M, \C)$ the space (or sheaf) of $\C$-valued differential $k$-forms
and reserve the notation $\Omega^k \subseteq \cA^k$ for the sheaf of $(k,0)$-forms (see below) that are holomorphic. 
We have seen in \autoref{subsec:ComplexAndKahlerManifolds} that the complexified cotangent space
decomposes as $\upT_{\C}^* M = {\upT^*}^{1,0} M \oplus {\upT^*}^{0,1} M$, this induces the decomposition
$\Lambda^k \upT_{\C}^* M = \bigoplus_{p+q = k} \left(\Lambda^p {\upT_{\C}^*}^{1,0} M\right) \wedge \left(\Lambda^q {\upT_{\C}^*}^{0,1} M\right)$
and accordingly $\cA^k = \bigoplus \cA^{p,q}$. The exterior derivative $\upd \colon \cA^k \to \cA^{k+1}$
also splits as $\upd = \del + \delbar$, where $\del \colon \cA^{p,q} \to \cA^{p+1,q}$ and $\delbar \colon \cA^{p,q} \to \cA^{p,q+1}$.
The fact that $\upd^2 = 0$ implies that $\delbar^2 = 0$, so one can consider the \emph{Dolbeault complex} $(\cA^{p, \bigcdot}, \delbar)$.
The $\delbar$-Poincaré lemma ensures that it is an acyclic resolution of the holomorphic sheaf 
$\Omega^{p} \subseteq \cA^{p,0}$, so that it computes its cohomology: we have $\operatorname{H}^q(\Omega^p) \approx \operatorname{H}^{p,q}_{\textrm{Dol}}$. 
This is the \emph{Dolbeault isomorphism}, analogous to the de Rham isomorphism $\operatorname{H}^k(M, \R) \approx \operatorname{H}_{\textrm{dR}}^k(M,\R)$
\footnote{On the left-hand side of these isomorphisms, we are taking the sheaf cohomology. It can be identified to the \v{C}ech cohomology, as a consequence of manifolds
being assumed paracompact and Hausdorff.}.

Assume that $M$ has a compatible Riemannian metric, \ie{} is Hermitian. We have seen in \autoref{subsec:dfHarmonic} that if $M$ is closed, the de 
Rham cohomology $\operatorname{H}_{\text{dR}}^k(M, \C)$ is isomorphic to the space of harmonic $k$-forms $\cH^k(M,\C)$. Does the Laplacian $\Delta = \upd \upd^* + \upd^* \upd$
respect the decomposition  $\cA^k = \bigoplus \cA^{p,q}$, yielding a refinement $\cH^k(M,\C) = \bigoplus \cH^{p,q}(M,\C)$? The answer is yes provided $M$ is Kähler. Indeed, it is a consequence of the 
\emph{Kähler identities} (see \cite{MR1924513}) that $\Delta = 2\Delta_{\delbar}$, where $\Delta_{\delbar} \coloneqq \delbar \delbar^* + \delbar^* \delbar$.
It follows that $\Delta$ preserves bidegree. Furthermore, we obviously have $\cH^{p,q}(M,\C) = \cH^{p,q}_{\Delta_{\delbar}}(M,\C)$,
and since the operator $\Delta_{\delbar}$ is self-adjoint and elliptic, the same proof as in \autoref{subsec:dfHarmonic}
shows that $\operatorname{H}^{p,q}_{\textrm{Dol}}(M, \C) = \cH^{p,q}_{\Delta_{\delbar}}(M,\C)$.

In summary, if $M$ is compact Kähler, then we have the decomposition $\cH^k(M,\C) = \bigoplus \cH^{p,q}(M,\C)$, which can canonically be rewritten 
\begin{equation} \label{eq:HodgeK}
\operatorname{H}^k(M,\C) \approx \bigoplus_{p+q = k} \operatorname{H}^{p,q}_{\textrm{Dol}}(M,\C) \approx \bigoplus_{p+q = k} \operatorname{H}^{q}(M,\Omega^p)\,.
\end{equation}

\begin{remark}
Remarkably, the isomorphisms in \eqref{eq:HodgeK} turn out to be independent of the Kähler metric. This can be shown by equating
the Dolbeault and the Bott-Chern cohomology using the $\del \delbar$-lemma.
\end{remark}

\begin{remark} \label{rem:HodgeStructure}
 In algebraic geometry, the decomposition of a complex vector space as $V = \bigoplus_{p+q = k} V^{p,q}$ with $V^{q,p} = \overline{V^{p,q}}$
is called a \emph{Hodge structure} of weight $k$\footnote{Typically, an integral structure is also required, \ie{} a lattice $V_\Z$ such that $V = V_\Z \otimes \C$. We ignore that part of the definition, but need at least a real structure so that complex conjugation is well-defined.}. Thus the cohomology of a compact Kähler manifold has a Hodge structure. A Hodge structure of weight $k$ is equivalent to a $\C^*$-action on $V$ such that any $\lambda \in \R^*$ acts by scalar multiplication by $\lambda^k$. 
Indeed, $V^{p,q}$ can be recovered as the subspace where any $\lambda \in \C^*$ acts by scalar multiplication by $\lambda^p \bar{\lambda}^q$.
For more details, refer to \cite[Chap.\ 15]{MR3727160}.
\end{remark}

\subsubsection{Extension to flat Hermitian bundles}
\label{subsubsec:FlatHermitian}

Let $M$ be a compact Kähler manifold.
It is straightforward to generalize the discussion of \autoref{subsubsec:ClassicalHodge} to differential forms with values in a complex vector bundle $E$, 
equipped with a \emph{flat} complex connection $\nabla$ and a compatible Hermitian metric $h$. We already mentioned in \autoref{subsec:dfHarmonic} that the de Rham cohomology and the Hodge isomorphism extend. It remains to define the Dolbeault cohomology, which is done by decomposing $\upd_\nabla$ into types as 
$\upd_\nabla = \del_\nabla + \delbar_\nabla$. We have $\delbar_\nabla^2 = 0$, so we can define the Dolbeault complex, \etc{} The Kähler identities extend to this situation, and similarly to the case $E = \C$ one proves
\begin{equation} \label{eq:HodgeKE}
\operatorname{H}^k(M,E) \approx \bigoplus_{p+q = k} \operatorname{H}^{p,q}_{\textrm{Dol}}(M,E) \approx \bigoplus_{p+q = k} \operatorname{H}^{q}(M,\Omega^p(E))\,.
\end{equation}

It is well-known that the data of an operator $\delbar_E \colon \cA^{p,q}(M,E) \to \cA^{p,q+1}(M, E)$ satisfying the appropriate Leibniz rule and such that $\delbar_E^2 = 0$ 
---called a \emph{Dolbeault operator}---is equivalent to a holomorphic structure in $E$, where $s \in \cA^{0}(E)$ is holomorphic if and only if $\delbar_E s = 0$\footnote{The holomorphic structure associated to a Dolbeault operator is sometimes called the \emph{Koszul--Malgrange} holomorphic structure. Its existence is a consequence of the Newlander-Nirenberg theorem.}. In the presence of a holomorphic structure and a Hermitian metric, it is easy to show that there exists a unique compatible connection $\nabla$ such that $\delbar_E = \delbar_\nabla$, called the \emph{Chern connection}. 
Similary to the case $E = \C$, the Hodge decomposition \eqref{eq:HodgeKE} turns out to be independent of both
the choice of Kähler metric on $M$ and the Hermitian metric on $E$: it only depends on the holomorphic structure on $E$, as long as there exists a Hermitian metric
whose Chern connection is flat---called a \emph{Hermite--Einstein metric}. 
This condition is not trivial, it implies first of all that $E$ has vanishing Chern classes.                                                                                                                                                                                                                                                                                                                                                                                                                                                                                                                                                                                                                                                                                                                                                                                                                                                                                                                                                                                                                                                                                                                                                                                                                                                                                                                                                                                                                                                                                                                                                                                                                                                                                                                                                                                                                                                                                                                                                                                                                                                                                                                                                                                                                                                                                                                                                                                                                                                                                                                                                                                                                                                                                                                                                                                                                                                                                                                                                                                                                                                                                                                                                                                                                                                                                                                                                                                                                                                                                                                                                                                                                                                                                                                                                                                                                                                                                                                                                                                                                                                                                                                 Conversely, such a vector bundle admits a Hermite--Einstein metric if and only if it is \emph{polystable}\footnote{A vector bundle $E$ of degree $0$ is called \emph{stable} if any subbundle has negative degree, \emph{semistable} if any subbundle has nonpositive degree, and \emph{polystable} if it is a direct sum of stable bundles of degree $0$.}. This result is known as the \emph{Hitchin--Kobayashi correspondence}; when $\dim_\C M = 1$ it is the famous theorem of Narasimhan–Seshadri \cite{MR184252}, generalized to higher dimensions by Donaldson \cite{MR765366} and
Uhlenbeck--Yau \cite{MR861491}.

A flat structure on a complex vector bundle is equivalent to a locally constant sheaf (the sheaf of flat sections), \ie{} a \emph{local system}. Alternatively, it is given by a group homomorphism $\rho \colon \pi_1 M \to \GL(n, \C)$ where $n$ is the rank of $E$, which is the holonomy of the flat connection $\nabla$\footnote{In the algebraic setting,
which allows singularities, the equivalence between flat bundles, local systems, and representations is known as the \emph{Riemann-Hilbert correspondence}.}. In our situation, 
$\nabla$ preserving a metric means that its holonomy is unitary, \ie{} $\rho \colon \pi_1 M \to \U(n)$. All in all, the Hitchin--Kobayashi correspondence
yields a bijection between conjugacy classes of unitary representations  and isomorphism classes of polystable holomorphic bundles with vanishing Chern classes.

\subsubsection{Extension to nonabelian groups}
\label{subsubsec:ExtensionNAGroups}

My humble takeaway is that nonabelian Hodge theory can be viewed as an extension of classical Hodge theory for the weight $k=1$ at two different---yet related---levels:
\begin{enumerate}[(i)]
 \item \label{item:NAi} At the ``macroscopic'' level: The classical case gives a Hodge structure on $\operatorname{H}^1(M, \C)$.
 We now seek a Hodge structure on $\operatorname{H}^1(M, G)$ when $G = \GL(n, \C)$ or a more general reductive group.
 \item \label{item:NAii} At the ``microscopic'' level: we saw that Hodge theory works for the cohomology $\operatorname{H}^1(M,E)$ with coefficients in a polystable holomorphic vector bundle $E$, \ie{} correspondi to a unitary representation $\rho \colon \pi_1 M \to \U(n)$. We seek a generalization for arbitrary flat bundles, 
 \ie{} for $\rho \colon \pi_1 M \to \GL(n,\C)$.
\end{enumerate}

\medskip

We now discuss this in a bit more detail, starting with \ref{item:NAi}. The cohomology $\operatorname{H}^k(M, \cF)$ is generally defined for a sheaf $\cF$ of \emph{abelian} groups: this ensures that quotient groups are well-defined. When $\cF$ is a nonabelian sheaf, the $k=1$ cohomology $\operatorname{H}^1(M, \cF)$ is still well-defined as a set. Recall that for $\cF = \C$, the Hodge decomposition was
\begin{equation} \label{eq:HodgeK1}
\operatorname{H}^1(M,\C) 
\approx \operatorname{H}^{0,1}_{\textrm{Dol}}(M,\C) \oplus \operatorname{H}^{1,0}_{\textrm{Dol}}(M,\C) 
\approx \operatorname{H}^{1}(M, \cO) \oplus \operatorname{H}^{0}(M,\Omega^1)\,.
\end{equation}
Consider now $G = \GL(n,\C)$, denote also by $G$ the constant sheaf over $M$, and $\cG$ its sheaf of holomorphic sections.
The cohomology $\operatorname{H}^{1}(M, G)$ can be identified to the space $\Hom(\pi_1 M, G)/G$ of group homomorphisms up to conjugation, 
or to the space of $G$-local systems over $M$. The analogue of
$\operatorname{H}^{1}(M, \cO)$ is $\operatorname{H}^{1}(M, \cG)$, which parametrizes holomorphic bundles of rank $n$ over $M$ up to isomorphism (this is easy to see with \v{C}ech cohomology). We shall see that the second piece $\operatorname{H}^{0}(M,\Omega^1)$ will be replaced by $\operatorname{H}^{0}(M,\End(\cE) \otimes \Omega^1 )$, where $\cE$ is the holomorphic vector bundle under consideration. Note that, since the second piece (called the \emph{Higgs field}) is defined relative to the first (the holomorphic bundle), the ``Hodge decomposition'' of $\operatorname{H}^{1}(M, G)$ is a twisted sum rather than a direct sum.

Now let us expand on \ref{item:NAii}. We have seen that Hodge theory applies in flat bundles that preserve a Hermitian metric, equivalently, to holomorphic bundles with vanishing Chern classes that admit a Hermite--Einstein metric. Simpson extended the Kähler identities to flat bundles that admit a \emph{harmonic} metric (see \autoref{subsec:NAH}), and showed that they correspond to polystable Higgs bundles with vanishing Chern classes, extending the Hitchin--Kobayashi correspondence. On the other hand, Corlette showed that harmonic flat bundles correspond to reductive representations $\rho \colon \pi_1 M \to \GL(n,\C)$
\footnote{\label{foot:Reductive}A representation $\rho \colon \pi_1 M \to G = \GL(n,\C)$ is called \emph{reductive} (or \emph{completely reducible}, or \emph{polystable}) if its action on $\C^n$ is completely reducible. More generally, if $G$ is any algebraic group, $\Gamma \coloneqq \rho(\pi_1 M) < G$ is called completely reducible if the identity component of its Zariski closure is a reductive subgroup, \ie{} with trivial unipotent radical. Equivalently, for every parabolic subgroup $P<G$ containing $\Gamma$, there is a Levi factor of $P$ containing $\Gamma$ \cite{MR2931326}.
}. Referring to these two correspondences, Simpson writes (\cite[below Thm.\ 1]{MR1159261}): 
\begin{quote}
The set of flat bundles is analogous to the abelian de Rham cohomology, while
the set of Higgs bundles is analogous to the abelian Dolbeault cohomology,
$\operatorname{H}^1(\cO) \oplus \operatorname{H}^0(\Omega^1)$. The first two parts of Theorem 1
may be interpreted as giving harmonic representatives for certain nonabelian de Rham and Dolbeault cohomology classes. 
The fact that the notion of harmonic representative (harmonic
bundle) is the same in both cases, is the analogue of the abelian Kähler identity $\Delta_{\upd} = 2 \Delta_{\delbar}$.
\end{quote}


\subsection{The nonabelian Hodge correspondence}
\label{subsec:NAH}

\subsubsection{From representations to Higgs bundles}
\label{subsubsec:RepToHiggs}

Let $M$ be a compact Kähler and $G$ be a reductive complex algebraic group. For simplicity, we first consider $G = \GL(n, \C)$.

Consider a flat rank $n$ vector bundle $(E, \nabla)$ over $M$. Recall that this is equivalent to a $G$-local system, or to a representation $\rho \colon \pi_1 M \to G$. Any Hermitian metric $h$ in $E$ induces a ``unitary $+$ self-adjoint'' decomposition $\nabla = \nabla_h + \psi_h$, where $\nabla_h$ is a connection preserving $h$ and $\psi_h \in \cA^1(M, \operatorname{Sym}_h(E))$ is a $1$-form with values in $h$-self-adjoint endomorphisms. The choice of the Hermitian metric $h$ is equivalent to a reduction of the structure group of $E$ from $G$ to $K = \U(n)$, given by a smooth $\rho$-equivariant\footnote{By definition, $f \colon \tilde{M} \to G/K$ is \emph{$\rho$-equivariant} if $f(\gamma \cdot x) = \rho(\gamma) \cdot f(x)$ for all $x \in M$ and $\gamma \in \pi_1M$, where $\pi_1 M$ acts by deck transformations on $\tilde{M}$ and $G$ acts by left translations on $G/K$.} map $f \colon \tilde{M} \to \mathbf{X} = G/K$ called the \emph{classifying map} of the metric.
Its differential $\upd f$ can be seen as a $1$-form with value in $f^* (\upT \mathbf{X})$, and the latter bundle can be identified $\operatorname{Sym}_h(E)$. One readily checks that $\psi_h = \upd f$ under this identification,
while $\nabla_h$ is essentially the pullback of the Levi-Civita connection of $\mathbf{X}$.

\begin{definition} \label{def:HarmonicFlatBundle}
 A Hermitian metric $h$ in a flat vector bundle $(E, \nabla)$ is \emph{harmonic} if it satisfies one of the equivalent conditions:
 \begin{enumerate}[(i)]
  \item $h$ is a critical point of the energy functional $\mathbf{E}(h) = \frac{1}{2} \int_M \Vert \psi_h \Vert^2 \upd v_M$.
  \item $\upd_{\nabla_h}^* \psi_h = 0$.
  \item \label{item:HarmonicFlatBundleiii} The classifying map $f \colon \tilde{M} \to \mathbf{X} = G/K$ is harmonic.
 \end{enumerate}
\end{definition}
The equivalence of the conditions in \autoref{def:HarmonicFlatBundle} is exactly \autoref{thm:CharacHarmonic}.

The third characterization \ref{item:HarmonicFlatBundleiii} implies that the question of the existence and uniqueness of a harmonic metric on $(E, \nabla)$ is the same as that of $\rho$-equivariant harmonic map $f \colon \tilde{M} \to \mathbf{X}$. 
More generally, one can study $\rho$-equivariant harmonic maps $f \colon \tilde{M} \to N$ where $M$ and $N$ are Riemannian manifolds, with $\rho \colon \pi_1 M \to \Isom(N)$. This is an extension of the classical situation studied by Eells--Sampson. When $N = \mathbf{X}$ is a symmetric space of noncompact type, Corlette proved:
\begin{theorem}[Corlette \cite{MR965220}] \label{thm:Corlette}
Let $M$ be a compact Riemannian manifold, let $\mathbf{X} = G/K$ be a symmetric space of noncompact type, and let $\rho \colon \pi_1 M \to G$ be a group homomorphism. There exists a $\rho$-equivariant harmonic map $f \colon \tilde{M} \to \mathbf{X}$ if and only if $\rho$ is reductive.
\end{theorem}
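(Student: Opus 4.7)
The plan is to prove the two directions separately, with the sufficiency direction being the substantive analytic content that motivated Corlette's paper.

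For necessity (existence of harmonic $f$ implies $\rho$ is reductive), I would argue by contradiction. If $\rho$ is not reductive, then by footnote \ref{foot:Reductive} there is a proper parabolic subgroup $P = L \ltimes U < G$ containing $\rho(\pi_1 M)$ with no Levi factor of $P$ containing $\rho(\pi_1 M)$. Such a $P$ is the stabilizer of a point $\xi$ in the visual boundary $\partial_\infty \mathbf{X}$, and the Busemann function $b_\xi \colon \mathbf{X} \to \mathbb{R}$ is convex along geodesics and invariant under $P$ up to an additive homomorphism $P \to \mathbb{R}$. Precomposing with the $\rho$-equivariant map $f$ yields a well-defined function on the compact quotient $M$. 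Convexity of $b_\xi$ combined with harmonicity of $f$ and the composition formula for the Laplacian (as already used in \autoref{subsubsec:PenroseFibration}) shows that $b_\xi \circ f$ is subharmonic; the maximum principle then forces it to be constant. Hence $f(\tilde M)$ lies in a single horosphere centered at $\xi$. A one-parameter subgroup $u(t) \subset U$ acting on this horosphere now produces a variation $f_t = u(t) \cdot f$ of $\rho$-equivariant maps along which the energy is strictly decreasing (non-triviality of the variation uses exactly that no Levi of $P$ contains $\rho(\pi_1 M)$), contradicting the fact that $f$ is a critical point of the equivariant energy functional.

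For sufficiency, I would adapt the Eells--Sampson heat flow strategy (\autoref{subsubsec:HeatFlow}) to the equivariant setting: starting from any smooth $\rho$-equivariant map $f_0 \colon \tilde M \to \mathbf{X}$, evolve it under $\partial_t f_t = \Delta f_t$. Equivariance is preserved because the Laplacian commutes with isometries. The Bochner formula \eqref{eq:BochnerRiemannian1}, combined with $R^N \leqslant 0$ and the bound on $\Ric^M$, delivers a uniform spacetime bound on $\Vert \nabla f_t \Vert$ exactly as in \eqref{eq:EnergyDensityIneq}, giving long-time existence and equicontinuity of $(f_t)$. The delicate issue is convergence as $t \to \infty$, since $\mathbf{X}$ is non-compact and the images $f_t(\tilde x)$ could a priori escape to infinity. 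The dichotomy is: either, after a suitable modification, the family stays in a bounded region and Arzelà--Ascoli plus elliptic regularity yield a $\rho$-equivariant harmonic limit; or else a rescaled subsequence produces a $\rho$-equivariant asymptotic object at $\partial_\infty \mathbf{X}$, exhibiting $\rho(\pi_1 M)$ as fixing a boundary point, hence sitting inside a proper parabolic with no reducing Levi. The reductivity hypothesis rules out the second alternative.

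The main obstacle is precisely this ``no escape to infinity'' step, which is the technical heart of Corlette's argument. Making it rigorous requires a quantitative lemma of the form: if energy fails to be attained along the flow for any choice of gauge $g_t \in G$ (i.e.\ the maps $g_t^{-1} \cdot f_t$ do not stay in a fixed compact region of $\mathbf{X}$), then there exists a one-parameter subgroup $a(s) \subset G$ along which the rescaled maps converge to a nonconstant map whose image has asymptotic direction fixed by $\rho(\pi_1 M)$. The convexity of the distance function on $\mathbf{X}$ (which has nonpositive curvature) is the essential tool here, controlling the drift of trajectories in terms of their energy. Once this asymptotic object is produced, the fixed direction yields a parabolic containing $\rho(\pi_1 M)$ with no Levi reduction, directly contradicting reductivity and closing the argument.
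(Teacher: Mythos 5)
The paper does not actually prove this statement: \autoref{thm:Corlette} is quoted from Corlette's paper \cite{MR965220}, with a remark pointing to Labourie's heat-flow proof, so there is no in-paper argument to compare yours against. Judged on its own, your proposal is a reasonable roadmap of the known proofs, but both directions contain genuine gaps.

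In the necessity direction, the final step fails as written. Post-composing $f$ with an isometry $u(t)$ of $\mathbf{X}$ leaves the energy density $\Vert \upd f \Vert^2$ pointwise unchanged, so the energy of $f_t = u(t) \cdot f$ is \emph{constant}, not strictly decreasing; moreover $u(t) \cdot f$ is equivariant for the conjugated representation $u(t) \rho u(t)^{-1}$, not for $\rho$, so it is not a competitor in the variational problem of which $f$ is a critical point. No contradiction with criticality can be extracted this way. A smaller but real issue earlier in the same direction: $b_\xi \circ f$ does not descend to $M$ in general, since $b_\xi(\rho(\gamma) x) = b_\xi(x) + c(\rho(\gamma))$ for a character $c$ of $P$ that need not vanish on $\rho(\pi_1 M)$; only $\upd (b_\xi \circ f)$, hence $\Delta(b_\xi \circ f)$, descends. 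The correct conclusion from the convexity computation is that $\Delta(b_\xi \circ f) \geqslant 0$ integrates to zero on $M$ (it is a divergence), forcing $\upd f$ to take values in $\ker \nabla^2 b_\xi$; turning that into a contradiction with ``no Levi of $P$ contains $\rho(\pi_1 M)$'' is where the actual work lies, and it is absent.

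In the sufficiency direction, you correctly identify the heat flow and the Bochner gradient bound \eqref{eq:EnergyDensityIneq} as giving long-time existence and equicontinuity, and you correctly identify escape to infinity as the obstruction to convergence. But the ``quantitative lemma'' you invoke --- that failure of convergence for every gauge $g_t$ produces a rescaled limit whose asymptotic direction is fixed by $\rho(\pi_1 M)$, in a parabolic admitting no Levi reduction --- is precisely the theorem's content, and you do not prove it. Note also that merely exhibiting $\rho(\pi_1 M)$ inside a proper parabolic does not contradict reductivity (reductive subgroups can lie in proper parabolics, provided some Levi contains them), so the escape argument must deliver the stronger ``no Levi'' conclusion, which requires a further argument (in Corlette's and Labourie's treatments this comes from convexity of the distance function along the flow together with a minimizing-sequence analysis). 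As it stands, the proposal is an accurate outline with its technical heart left open.
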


\begin{remark}
 Like Corlette, we stated \autoref{thm:Corlette} for a symmetric space of noncompact type, \ie{} for a semisimple Lie group $G$, but it holds more generally for a reductive group such as $G = \GL(n, \C)$. The same goes for most of \autoref{sec:SymmetricSpaces} and \autoref{lem:SSNAH} below.
\end{remark}

\begin{remark}
Labourie \cite{MR1049845} gave a more general version of \autoref{thm:Corlette}, replacing $\mathbf{X}$ by any simply-connected Riemannian manifold $N$ of nonpositive sectional curvature ``without flat half-strips''. His short proof is an extension of the heat flow method of Eells--Sampson described in \autoref{subsubsec:HeatFlow}.
\end{remark}

\begin{remark}
An application of the classical uniqueness result for harmonic maps (due to Hartman \cite{MR0214004} and Al'ber \cite{MR0230254}) shows that in \autoref{thm:Corlette}, when a $\rho$-equivariant harmonic map $f$ exists, it is necessarily unique up to post-composition with elements of the centralizer of $\rho(\pi_1(M)) < G$. 
\end{remark}

In the language of bundles, Corlette's theorem says that a flat bundle $(E, \nabla)$ admits a (essentially unique) harmonic metric if and only if it is semisimple. We are now halfway in the nonabelian Hodge correspondence from (reductive) representations, \ie{} (semisimple) flat bundles, to Higgs bundles:
\begin{equation} \label{eq:NAH1}
\begin{tikzcd} 
 \left\{\text{flat bundles}\right\} \ar[r] & \left\{\text{harmonic bundles}\right\} \ar[r, dashed, "?"]  & \left\{\text{Higgs bundles}\right\}
\end{tikzcd}
\end{equation}
The second step, from harmonic bundles to Higgs bundles, is where the theorem of Siu--Sampson is critical. Specifically, we have the key lemma, which is little more than a rephrasing of \autoref{lem:AbelianSub}:
\begin{lemma} \label{lem:SSNAH}
Let $f \colon M \to \mathbf{X}$, where $M$ is compact Kähler and $\mathbf{X} = G/K$ is symmetric  of noncompact type. Denote by $\upd_\nabla$ the exterior covariant derivative in $\Omega^{\bullet}(M, f^* \upT \mathbf{X})$ as in \autoref{subsec:dfHarmonic}. If $f$ is harmonic, then:
\begin{align}
 \left(\delbar_\nabla\right)^2 &= 0 \label{eq:PHS1} \\
\delbar_\nabla {\upd}^{1,0} f  &= 0 \label{eq:PHS2} \\
 \left[\upd^{1,0}f, \upd^{1,0}f\right] &=0 \label{eq:PHS3} \,.
\end{align}
\end{lemma}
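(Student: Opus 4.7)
The plan is to deduce all three identities by specializing the Siu--Sampson theorem (\autoref{thm:SiuSampson}) to the target $\mathbf{X} = G/K$ and exploiting the very special algebraic structure of the curvature tensor of a symmetric space of noncompact type (\autoref{prop:CurvSymmSpace} and \autoref{cor:CurvSymmSpace}). Most of the required work has already been packaged into \autoref{lem:AbelianSub}, so this lemma should really just be a translation of its conclusion into bundle-theoretic language.

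First I would note that $\mathbf{X}$ has very strongly nonpositive curvature by \autoref{cor:CurvSymmSpace}\,\ref{cor:CurvSymmSpacei}, so \autoref{thm:SiuSampson} applies: $f$ is pluriharmonic, and \autoref{lem:AbelianSub} furthermore provides that $\mathfrak{a}_x \coloneqq \upd f(\upT_x^{1,0} M) \subseteq \mathfrak{p}^\C$ is an abelian subalgebra for every $x \in M$. Equation \eqref{eq:PHS2} is then immediate from the characterization $\delbar_\nabla \upd^{1,0} f = 0$ of pluriharmonicity in \autoref{prop:CharacterizationsPluriharmonic}, and equation \eqref{eq:PHS3} is just the pointwise reading of the abelianness of $\mathfrak{a}_x$, namely $[\upd f(X), \upd f(Y)] = 0$ in $\mathfrak{g}^\C$ for all $X, Y \in \upT_x^{1,0} M$.

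For \eqref{eq:PHS1}, I would invoke $\upd_\nabla^2 = \,\cdot\, \wedge F^\nabla$ on $\Omega^\bullet(M, f^* \upT \mathbf{X})$ (see \autoref{subsec:CurvatureTensors}) and decompose by type: $\delbar_\nabla^2$ is precisely the $(0,2)$-component of $F^\nabla$. For the pullback connection $\nabla = f^* \nabla^N$, one has $F^\nabla(X, Y) = R^N(\upd f(X), \upd f(Y))$, hence
\[
(F^\nabla)^{0,2}(X, Y) \;=\; R^N\!\left(\upd f(X^{0,1}),\, \upd f(Y^{0,1})\right).
\]
Now $\upd f(\upT_x^{0,1} M) = \overline{\mathfrak{a}_x}$ is also abelian (complex conjugation on $\mathfrak{g}^\C$ preserves the bracket), so substituting into the formula $R^N(U, V) W = -[[U, V], W]$ from \autoref{prop:CurvSymmSpace} yields $(F^\nabla)^{0,2} = 0$, and therefore $\delbar_\nabla^2 = 0$.

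No step is a real obstacle: the deep analytic input (the Siu--Sampson constraint) has already been established in \autoref{sec:SiuSampsonTheorem}, and the key algebraic input in \autoref{lem:AbelianSub}. The only point that requires minor care is to track the $K$-equivariant identification $\upT_{f(x)} \mathbf{X} \simeq \mathfrak{p}$ under which the Lie bracket on $\mathfrak{p}^\C$ descends to a well-defined fiberwise operation on $f^* \upT_\C \mathbf{X}$; once this is in place, each of the three identities follows directly.
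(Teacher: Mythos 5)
Your proposal is correct and follows essentially the same route as the paper: \eqref{eq:PHS2} and \eqref{eq:PHS3} are read off directly from \autoref{lem:AbelianSub} via \autoref{prop:CharacterizationsPluriharmonic}, and \eqref{eq:PHS1} is reduced to the vanishing of the $(0,2)$-part of the pullback curvature, which holds because $\overline{\mathfrak{a}_x} = \upd f(\upT_x^{0,1}M)$ is abelian by conjugating \eqref{eq:PHS3}. Your write-up is slightly more explicit than the paper's about the identity $\delbar_\nabla^2 = (F^\nabla)^{0,2}$ and the curvature formula $R(U,V)W = -[[U,V],W]$, but the argument is the same.
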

\begin{remark}
The expression $[\upd^{1,0}f, \upd^{1,0}f]$ is an element of $\cA^2(M, f^*\upT_\C \mathbf{X})$, taking the exterior product of one-forms with the Lie bracket in $\upT_\C X \approx \mathfrak{g}$ as coefficient pairing. When $G = \GL(n,\C)$, we saw that $\upd f = \psi_h$ is a one-form with values in $\operatorname{Sym}_h(E)$, and the Lie bracket is the commutator of endomorphisms.
\end{remark}

\begin{proof}[Proof of \autoref{lem:SSNAH}]
\autoref{lem:AbelianSub} immediately gives \eqref{eq:PHS2}, which is a characterization of pluriharmonicity (\autoref{prop:CharacterizationsPluriharmonic}), and \eqref{eq:PHS3}, which is the fact that $\mathfrak{a}_x \coloneqq \upd f\left(\upT_x^{1,0} M\right)$ is an abelian subalgebra of $\mathfrak{g}$ for all $x \in M$. As for \eqref{eq:PHS1}, it is the condition that 
$R^\mathbf{X}(\upd f (X), \upd f(Y)) = 0$ for all $X, Y \in \upT^{0,1} M$, in other words $\left[\upd^{0,1}f, \upd^{0,1}f\right] =0$, which is easily derived from $\left[\upd^{1,0}f, \upd^{1,0}f\right] =0$ by conjugation.
\end{proof}

\begin{remark}
Consider the vector bundle $f^* \upT_\C N$ over $M$. If $f$ is harmonic, by \eqref{eq:PHS1}, $\delbar_\nabla$ is a Dolbeault operator, giving
it a holomorphic structure. \eqref{eq:PHS2} then says that $\upd^{1,0} f$ is a holomorphic $(1,0)$-form.
\end{remark}

\begin{theorem}[Application of the Siu--Sampson theorem] \label{thm:SSApp}
Let $(E, \nabla)$ be a flat complex vector bundle. If $h$ is a harmonic metric on $(E, \nabla)$, then $\delbar_E \coloneqq \delbar_{\nabla_h}$ and $\varphi \coloneqq \left(\psi_h\right)^{1,0}$ satisfy:
\begin{align} \label{eq:HiggsEq}
 \delbar_{E}^2 &= 0\\
 \delbar_{E} \varphi  &= 0\\
 [\varphi , \varphi ] &=0\,.
\end{align}
Moreover, we have \emph{Hitchin's equation}\footnote{Hitchin calls this equation together with $\delbar_{E} \varphi = 0$ the \emph{self-duality equations} \cite{MR887284}.}:
\begin{equation} \label{eq:SelfDuality}
F^{\nabla_h} + \left[\varphi, \varphi^{*_h} \right] = 0\,.
\end{equation}
\end{theorem}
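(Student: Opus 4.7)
The plan is to invoke the theorem of Siu and Sampson via the classifying map associated to $h$, translate its conclusions to bundle language, and extract Hitchin's equation from flatness as a bonus.

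First, since $h$ is harmonic and the monodromy $\rho$ of $(E, \nabla)$ is necessarily reductive (else no harmonic metric exists), \autoref{thm:Corlette} produces a $\rho$-equivariant harmonic classifying map $f \colon \tilde{M} \to \mathbf{X} = G/K$. Applying \autoref{lem:SSNAH} to $f$ yields the three identities $\delbar_\nabla^2 = 0$, $\delbar_\nabla \upd^{1,0} f = 0$, and $[\upd^{1,0} f, \upd^{1,0} f] = 0$ for forms valued in $f^* \upT \mathbf{X}$ with its pullback Levi--Civita connection. I then translate using the standard dictionary between the classifying map picture and the bundle picture: under the smooth identification $f^* \upT \mathbf{X} \cong \operatorname{Sym}_h(E)$, the form $\upd f$ corresponds to $\psi_h$ (so $\upd^{1,0} f$ to $\varphi$), and $f^*\nabla^{\mathbf{X}}$ corresponds to the connection induced by $\nabla_h$. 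The second and third conclusions of Siu--Sampson then directly read $\delbar_E \varphi = 0$ and $[\varphi, \varphi] = 0$.

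For Hitchin's equation, I expand flatness $F^\nabla = 0$ using $\nabla = \nabla_h + \psi_h$:
\begin{equation*}
0 = F^\nabla = F^{\nabla_h} + \upd_{\nabla_h} \psi_h + \psi_h \wedge \psi_h\,.
\end{equation*}
Since $\nabla_h$ preserves $h$ and $\psi_h$ is $h$-Hermitian, a direct check shows that $F^{\nabla_h}$ and $\psi_h \wedge \psi_h$ take values in $h$-skew-Hermitian endomorphisms, whereas $\upd_{\nabla_h} \psi_h$ is $h$-Hermitian valued. Splitting into Hermitian and skew-Hermitian parts therefore yields separately $\upd_{\nabla_h} \psi_h = 0$ and $F^{\nabla_h} + \psi_h \wedge \psi_h = 0$. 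Expanding $\psi_h = \varphi + \varphi^{*_h}$, the pure-type components $\varphi \wedge \varphi = \tfrac12 [\varphi, \varphi]$ and $\varphi^{*_h} \wedge \varphi^{*_h}$ vanish thanks to $[\varphi,\varphi]=0$ (the latter by taking the $h$-adjoint), so only the mixed term $[\varphi, \varphi^{*_h}]$ survives, yielding Hitchin's equation. The remaining assertion $\delbar_E^2 = 0$ then comes for free: Hitchin's equation shows that $F^{\nabla_h} = -[\varphi, \varphi^{*_h}]$ is of pure type $(1,1)$, so its $(0,2)$-part---which is precisely $\delbar_E^2$---vanishes.

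The main technical hurdle lies in the translation step: one needs to verify carefully that under $f^* \upT \mathbf{X} \cong \operatorname{Sym}_h(E)$ the pullback Levi--Civita connection really corresponds to the restriction of the $\nabla_h$-induced connection on Hermitian endomorphisms. This is essentially the intrinsic description of the Levi--Civita connection of a symmetric space $G/K$ in terms of the Cartan decomposition $\mathfrak{g} = \mathfrak{k} \oplus \mathfrak{p}$, along the lines of the proof of \autoref{prop:CurvSymmSpace}. Once this dictionary is established, everything else is either a direct application of \autoref{lem:SSNAH} or a bidegree-and-adjoint bookkeeping on the flatness equation.
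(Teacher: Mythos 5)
Your proposal is correct and follows essentially the same route as the paper: identify $\psi_h$ with $\upd f$ and $\nabla_h$ with the pullback Levi--Civita connection, read off the first identities from \autoref{lem:SSNAH}, and obtain Hitchin's equation by splitting the flatness equation $F^{\nabla_h} + \upd_{\nabla_h}\psi_h + \psi_h \wedge \psi_h = 0$ into $h$-self-adjoint and skew-adjoint parts and expanding $\psi_h = \varphi + \varphi^{*_h}$. Two minor remarks: invoking \autoref{thm:Corlette} is unnecessary, since the hypothesis already supplies the harmonic metric $h$ whose classifying map is the harmonic map (by \autoref{def:HarmonicFlatBundle}); and you derive $\delbar_E^2 = 0$ from the $(1,1)$-type of $F^{\nabla_h} = -[\varphi,\varphi^{*_h}]$ rather than directly from \eqref{eq:PHS1} as the paper does, which is a valid and equally clean alternative.
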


\begin{proof}
We have seen that under appropriate identifications, $\psi_h = \upd f$ and $\nabla_h$ is the pullback of the Levi-Civita connection of $\mathbf{X}$.
The first part of the theorem is then simply a rephrasing of \autoref{lem:SSNAH}.
Finally, Hitchin's equation amounts to the flatness of $\nabla$. Indeed, first write $F^\nabla = F^{\nabla_h} + \nabla_h \psi_h + \psi_h \wedge \psi_h =  0$. Decomposing into $\pm$-$h$-self-adjoint components:
 \begin{align} \label{eq:Flatness1}
 F^{\nabla_h} + \psi_h \wedge \psi_h &= 0\\
 \nabla_h \psi_h &= 0\,.
\end{align}
The second equation can be rewritten $\upd_{\nabla_h} \upd f = 0$, and we have seen in \autoref{prop:dfclosed} that this is actually automatic. The first equation can be rewritten $F^{\nabla_h} + \frac{1}{2} [\psi_h , \psi_h] = 0$. Since $\psi_h$ is $h$-self-adjoint, one must have $\psi^{0,1} = \left(\psi^{1,0}\right)^{*_h}$, \ie{} $\psi = \varphi + \varphi^{*_h}$. Since $[\varphi , \varphi ] = \left[\varphi^{*_h} , \varphi^{*_h} \right] = 0$, the conclusion follows.
\end{proof}


\begin{definition}
 A \emph{Higgs bundle} is a pair $(\cE, \varphi)$, where $\cE$ is a holomorphic vector bundle and $\varphi$ is a holomorphic section of $\End \cE \otimes \Omega^1$ such that $[\varphi, \varphi] = 0$.
\end{definition}


\autoref{thm:SSApp} thus says that if $(E, \nabla, h)$ is a flat harmonic bundle, then $(\cE, \varphi)$
is a Higgs bundle, where $\cE \coloneqq (E, \delbar_E)$.  This complete our description of the nonabelian Hodge correspondence from representations to Higgs bundles.


\subsubsection{From Higgs bundles to representations}
\label{subsubsec:HiggsToRep}

In the previous subsection, we described how to produce a harmonic flat bundle out of a representation, and then a Higgs bundle. This construction also yielded a metric $h$ satisfying Hitchin's equation \eqref{eq:SelfDuality}. Note that, forgetting the flat connection $\nabla$, Hitchin's equation still makes sense given the Higgs bundle $(\cE, \varphi)$: one should then understand $\nabla_h$ as the Chern connection of $h$ in $\cE$.
\begin{definition}
A Hermitian metric $h$ in a Higgs bundle $(\cE, \varphi)$ with vanishing Chern classes is called \emph{Hermitian--Yang--Mills} if it satisfies Hitchin's equation \eqref{eq:SelfDuality}.
\end{definition}

We saw that if $(E, \nabla, h)$ is a flat harmonic bundle, then $(\cE, \varphi, h)$ is a Hermitian--Yang--Mills Higgs bundle, with
$\delbar_E \coloneqq \delbar_{\nabla_h}$ and $\varphi \coloneqq \left(\psi_h\right)^{1,0}$, where $\nabla = \nabla_h + \psi_h$ is the unitary $+$ self-adjoint decomposition of $\nabla$. Conversely, $(E, \nabla, h)$ is easily reconstructed from $(\cE, \varphi, h)$: put $\nabla = \nabla_h + \psi_h$, where $\nabla_h$ is the Chern connection of $h$ in $\cE$, and $\psi_h \coloneqq \varphi + \varphi^{*_h}$. It is then straightforward to check that $\nabla$ is flat and $h$ is harmonic, using the equations $F^{\nabla_h} + \left[\varphi, \varphi^{*_h} \right] = 0$, $\delbar_{\nabla_h} \varphi = 0$, and $[\varphi, \varphi] = 0$. Let us put this on record:
\begin{proposition}
Let $E \to M$ be a complex vector bundle with vanishing Chern classes. There is a $1$:$1$ correspondence between harmonic flat bundles $(E, \nabla, h)$ and Hermitian--Yang--Mills Higgs bundles $(\cE, \varphi, h)$.
\end{proposition}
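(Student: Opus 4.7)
The plan is to build both directions of the correspondence explicitly and verify they are mutually inverse. The forward map $(E, \nabla, h) \mapsto (\cE, \varphi, h)$ is already furnished by \autoref{thm:SSApp}: decompose $\nabla = \nabla_h + \psi_h$ uniquely into its unitary part and a Hermitian endomorphism-valued $1$-form, then set $\cE \coloneqq (E, \delbar_{\nabla_h})$ and $\varphi \coloneqq (\psi_h)^{1,0}$. The theorem already checks that this is a Hermitian--Yang--Mills Higgs bundle. So the real work is the backward direction and the inverse verification.

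For the backward construction, given $(\cE, \varphi, h)$ Hermitian--Yang--Mills, I let $\nabla_h$ be the Chern connection of $h$ in $\cE$ and set $\nabla \coloneqq \nabla_h + \varphi + \varphi^{*_h}$. To verify flatness, I would expand
\begin{equation*}
F^\nabla = F^{\nabla_h} + \upd_{\nabla_h}(\varphi + \varphi^{*_h}) + (\varphi + \varphi^{*_h}) \wedge (\varphi + \varphi^{*_h})
\end{equation*}
and collect terms by Hodge type. The Chern curvature $F^{\nabla_h}$ is of pure type $(1,1)$; the holomorphicity $\delbar_{\nabla_h}\varphi = 0$ (and its conjugate $\del_{\nabla_h}\varphi^{*_h} = 0$) kills certain terms in $\upd_{\nabla_h}(\varphi + \varphi^{*_h})$; and $\varphi \wedge \varphi = 0$ (from $[\varphi, \varphi] = 0$) together with its conjugate removes the pure $(2,0)$ and $(0,2)$ pieces of the quadratic term. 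What remains in the $(1,1)$-part is precisely Hitchin's equation $F^{\nabla_h} + [\varphi, \varphi^{*_h}] = 0$, which holds by hypothesis. For harmonicity of $h$ with respect to $\nabla$, I would check $\upd^*_{\nabla_h}(\varphi + \varphi^{*_h}) = 0$ by splitting $\upd^*_{\nabla_h}$ into its $(1,0)$ and $(0,1)$ components and invoking the K\"ahler identities to reduce both to the holomorphicity of $\varphi$ and of $\varphi^{*_h}$.

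Finally, the two maps are mutually inverse by uniqueness: any linear connection on a Hermitian vector bundle $(E, h)$ splits uniquely as $\nabla_h + \psi_h$ with $\nabla_h$ metric-preserving and $\psi_h$ Hermitian (pointwise orthogonal decomposition in $\End E$), and the Chern connection is the unique metric-preserving connection whose $(0,1)$-part equals a prescribed Dolbeault operator. Tracing through, one checks that $\psi_h = \varphi + \varphi^{*_h}$ whenever $\psi_h$ is Hermitian with $(1,0)$-part $\varphi$, which closes the loop in both directions. The most delicate point in the plan is controlling the pure $(2,0)$ and $(0,2)$ components of $F^\nabla$ beyond the obvious Higgs identities: this is where the K\"ahler condition and compactness of $M$ enter decisively, via the K\"ahler identities applied in conjunction with Hitchin's equation.
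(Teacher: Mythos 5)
Your proposal is correct and follows the same route as the paper, which constructs the inverse map identically ($\nabla = \nabla_h + \varphi + \varphi^{*_h}$ with $\nabla_h$ the Chern connection of $h$ in $\cE$) and then dismisses the verification as ``straightforward to check'' from $\delbar_E\varphi=0$, $[\varphi,\varphi]=0$, and Hitchin's equation. You are in fact more careful than the text on the one genuinely non-formal point: the vanishing of the $(2,0)$-component $\del_{\nabla_h}\varphi$ of $F^\nabla$ (and of its conjugate) does not follow pointwise from those equations---already for a line bundle it amounts to the statement that holomorphic $1$-forms on a compact Kähler manifold are closed---so your appeal to compactness and the Kähler identities, in conjunction with Hitchin's equation, is exactly the global argument needed there.
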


\begin{remark}
Instead of \emph{Hermitian--Yang--Mills} metrics, \cite{Brantner} says \emph{Higgs--Hermite--Einstein}  to emphasize that they generalize Hermite--Einstein metrics. For Higgs bundles with vanishing Chern classes, it is sensible to call them \emph{harmonic}. The coincidence of harmonicity for flat bundles and Higgs bundles is the analogue of the Kähler identity $\Delta_{\upd} = 2\Delta_{\delbar}$ that we referred to in \autoref{subsubsec:ExtensionNAGroups} in the words of Simpson.
\end{remark}

To complete the nonabelian Hodge correspondence, we need the fundamental result of Hitchin \cite{MR887284} (for the rank $2$ case over a Riemann surface) and Simpson \cite{MR944577} (for the general case), generalizing the Hitchin-Kobayashi correspondence:
\begin{theorem}[Simpson] \label{thm:Simpson}
A Higgs bundle admits a Hermitian--Yang--Mills metric if and only if it is polystable\footnote{The notion of stability for Higgs bundles is a natural 
refinement of the notion for holomorphic bundles: in the definition, only considers Higgs subbundles, \ie{} holomorphic subbundles preserved by the Higgs field.}. 
\end{theorem}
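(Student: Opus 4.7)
The plan is to prove the theorem in two directions following the established pattern of the Hitchin--Kobayashi correspondence, with additional care to accommodate the Higgs field $\varphi$. First I would dispose of the easy ``stability from existence'' implication via a Chern--Weil type computation, and then invest the bulk of the work in the converse ``existence from stability'' direction using a variational/parabolic approach.

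For the easy direction, suppose $(\cE, \varphi)$ admits a Hermitian--Yang--Mills metric $h$, so that $F^{\nabla_h} + [\varphi, \varphi^{*_h}] = 0$. Let $\cF \subseteq \cE$ be a Higgs subbundle, \ie{} a holomorphic subbundle preserved by $\varphi$, and let $\pi \in \Gamma(\End \cE)$ be the smooth $h$-orthogonal projection onto $\cF$. The classical Kobayashi computation for holomorphic subbundles extends in the Higgs setting to an identity of the form
\begin{equation}
\deg(\cF) \;=\; \frac{i}{2\pi} \int_M \tr(\pi \, F^{\nabla_h}) \wedge \frac{\omega^{n-1}}{(n-1)!} \;-\; \int_M \left(\Vert \delbar \pi \Vert^2 + \Vert [\varphi, \pi] \Vert^2 \right) \vol_M\,,
\end{equation}
where the extra term $\Vert [\varphi, \pi] \Vert^2$ appears because $\pi$ must additionally commute with $\varphi$ for $\cF$ to be $\varphi$-invariant. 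Substituting Hitchin's equation and using that $\int_M \tr(\pi \, [\varphi, \varphi^{*_h}]) \wedge \omega^{n-1}$ combines with the curvature term to give a nonpositive contribution, one concludes $\deg(\cF) \leqslant 0$ under the vanishing Chern class assumption, with equality only if $\delbar \pi = 0$ and $[\varphi, \pi] = 0$. This establishes semistability, and the equality case yields an $h$-orthogonal splitting by Higgs subbundles, from which polystability follows by induction on the rank.

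For the hard direction, assume $(\cE, \varphi)$ is polystable, and by a splitting reduction one may assume it is stable. The strategy is to minimize the \emph{Donaldson functional} $\mathbf{M}(h_0, h)$, a Higgs-modified version of Donaldson's functional measuring an $L^2$-like distance between a fixed background metric $h_0$ and a variable metric $h = h_0\, e^{s}$ with $s$ a traceless $h_0$-Hermitian endomorphism, whose Euler--Lagrange equation is precisely Hitchin's equation. Equivalently, one studies the \emph{Donaldson heat flow}
\begin{equation}
 h^{-1} \partial_t h \;=\; -\, i\, \Lambda_{\omega} \bigl( F^{\nabla_h} + [\varphi, \varphi^{*_h}] \bigr)\,,
\end{equation}
whose fixed points are exactly the Hermitian--Yang--Mills metrics. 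Short-time existence and long-time existence of the flow follow from standard parabolic PDE techniques, the latter using that the relevant estimate blows up only if some norm diverges in finite time, which is precluded by an a priori $\sup$-estimate.

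The main obstacle is the convergence of the flow (equivalently, attainment of the infimum of $\mathbf{M}$), where stability enters crucially. Following Simpson \cite{MR944577}, one argues by contradiction: if $\sup \Vert s_t \Vert_{L^\infty}$ is not bounded as $t \to \infty$, one renormalizes $s_t$ and extracts a weak $L^p_1$-limit via Uhlenbeck--Yau's regularity theorem, producing a weakly holomorphic projection onto a nontrivial subsheaf $\cF \subseteq \cE$; crucially, the $[\varphi, \varphi^{*_h}]$ term in Hitchin's equation forces $\cF$ to be $\varphi$-invariant, \ie{} a Higgs subsheaf, and the structure of the unbounded direction in $\mathbf{M}$ forces $\deg(\cF) \geqslant 0$, contradicting stability. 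With $\Vert s_t \Vert_{L^\infty}$ controlled, standard elliptic bootstrapping on $F^{\nabla_h} + [\varphi, \varphi^{*_h}]$ yields uniform $C^k$ bounds, a subsequence converges smoothly to a metric $h_\infty$, and passing to the limit in Hitchin's equation concludes the proof. The polystable case is recovered by induction on the rank after splitting off each destabilizing degree-$0$ Higgs subsheaf, and assembling the resulting metrics on each stable factor into a metric on the direct sum.
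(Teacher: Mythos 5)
The paper does not actually prove this theorem: it explicitly defers to Hitchin \cite{MR887284}, Simpson \cite{MR944577}, and the survey \cite{MR3675465}, saying only that the proof ``consists in minimizing a Yang--Mills functional.'' Your outline is therefore being compared against the standard argument the paper alludes to, and in structure it matches it faithfully: Chern--Weil for the easy direction, and the Donaldson functional / heat flow with the Uhlenbeck--Yau blow-up analysis producing a destabilizing Higgs subsheaf for the hard direction. Two caveats. First, as written this is a roadmap rather than a proof: the main estimate, the long-time existence, the regularity theorem turning a weak $L^2_1$ limit of projections into a coherent (saturated) Higgs subsheaf, and the lower bound on its degree are each substantial theorems that you invoke rather than establish --- unavoidable at this length, but it should be acknowledged that these carry essentially all of the difficulty.

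Second, there is a genuine logical gap at the very end. The Euler--Lagrange equation of the Donaldson functional, and the fixed-point equation of the flow you write down, is the \emph{contracted} equation $\Lambda_\omega\bigl(F^{\nabla_h} + [\varphi, \varphi^{*_h}]\bigr) = \lambda\,\mathrm{id}$ (with $\lambda = 0$ in the degree-zero case), not the full two-form equation $F^{\nabla_h} + [\varphi, \varphi^{*_h}] = 0$ that the paper takes as the definition of a Hermitian--Yang--Mills metric in \eqref{eq:SelfDuality}. Passing from the trace-free statement to the full vanishing --- equivalently, to the flatness of $\nabla_h + \varphi + \varphi^{*_h}$, which is what the nonabelian Hodge correspondence needs --- requires the hypothesis $\mathrm{ch}_2(\cE)\cdot[\omega]^{n-2} = 0$ together with an additional Chern--Weil integral identity for $\int_M \tr\bigl((F^{\nabla_h} + [\varphi,\varphi^{*_h}])^2\bigr)\wedge \omega^{n-2}$; ``passing to the limit in Hitchin's equation'' does not by itself deliver this. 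Relatedly, in higher dimensions stability must be tested against Higgs \emph{subsheaves} (you say this correctly in the hard direction, but your easy direction only treats subbundles, and the saturation/resolution of the limiting subsheaf outside a codimension-two locus is where some of the real work lies).
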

The proof of \autoref{thm:Simpson} relies on geometric analysis; it consists in minimizing a Yang--Mills functional. We shall not discuss any details and refer to the excellent \cite{MR3675465} instead.

Let us recap with a diagram slightly more detailed than \eqref{eq:NAH1}:

{\small
\begin{equation}
\noindent \begin{tikzpicture}[scale = 0.98\textwidth/(10*(72.27/2.54))] 

\draw [rounded corners, fill=black, fill opacity=0.07] (0,0.1) rectangle (2.5, 2.3) ;
\draw (1.25, 2.0) node {Representations} ;
\draw (1.25, 1.7) node {$\rho \colon \pi_1 M \to G$} ;
\draw[<->,>=latex] (0.8, 1.5) to (0.8, 0.9);
\draw (1.6, 1.2) node {\footnotesize \emph{Riemann--Hilbert}} ;
\draw (1.25, 0.7) node {Flat bundles} ;
\draw (1.25, 0.4) node {$(E, \nabla)$} ;

\draw[<->,>=latex] (2.6, 1.2) to (3.65, 1.2);
\draw (3.125, 1.4) node {\footnotesize \emph{Corlette}} ;
  
\draw [rounded corners, fill=black, fill opacity=0.07] (3.75,0.1) rectangle (6.25, 2.3) ;
\draw (5.0, 2.0) node {Harmonic flat bundles} ;
\draw (5.0, 1.7) node {$(E, \nabla, h)$} ;
\draw[<->,>=latex] (4.8, 1.5) to (4.8, 0.9);
\draw (5.4, 1.2) node {\footnotesize \emph{Siu--Sampson}} ;
\draw (5.0, 0.7) node {Harmonic Higgs bundles} ;
\draw (5.0, 0.4) node {$(\cE, \varphi, h)$} ;

\draw[<->,>=latex] (6.35, 1.2) to (7.4, 1.2);
\draw (6.875, 1.4) node {\footnotesize \emph{Simpson}} ;
   
\draw [rounded corners, fill=black, fill opacity=0.07] (7.5,0.1) rectangle (10, 2.3) ;
\draw (8.75, 1.45) node {Higgs bundles} ;
\draw (8.75, 1.15) node {$(\cE, \varphi)$} ;
\end{tikzpicture}
\end{equation}}

Let us give a cute return on investment from our study of pluriharmonic maps (\autoref{subsec:Pluriharmonic}):
\begin{proposition}
Let $Q \subseteq M$ be a complex submanifold. Any polystable Higgs bundle $(\cE, \varphi)$ on $M$ stays polystable in restriction to $Q$.
\end{proposition}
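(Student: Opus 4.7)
The strategy is to traverse the nonabelian Hodge correspondence in both directions, using the Siu--Sampson theorem as the hinge. Starting from the polystable Higgs bundle $(\cE, \varphi)$ on $M$ (necessarily with vanishing Chern classes for Simpson's theorem to apply), \autoref{thm:Simpson} produces a Hermitian--Yang--Mills metric $h$ on $(\cE, \varphi)$, hence a harmonic flat bundle $(E, \nabla, h)$ on $M$. Its classifying map $f \colon \tilde M \to \mathbf X = G/K$ is then harmonic by \autoref{def:HarmonicFlatBundle}\ref{item:HarmonicFlatBundleiii}, where $G = \GL(n,\C)$.

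The crucial step is to upgrade harmonicity to pluriharmonicity via the Siu--Sampson theorem (\autoref{thm:SiuSampson}): $M$ is compact Kähler and the symmetric space $\mathbf X$ of noncompact type has nonpositive Hermitian sectional curvature by \autoref{cor:CurvSymmSpace}\ref{cor:CurvSymmSpacei}, so $f$ is in fact pluriharmonic. Without this upgrade the argument below would fail, since the harmonicity of $h$ on $(E, \nabla)$ does not a priori survive restriction to a submanifold.

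Now restrict: let $\tilde Q \subseteq \tilde M$ be a connected component of the preimage of $Q$, a complex submanifold of $\tilde M$. By \autoref{prop:PropertiesPluriharmonic}\ref{item:propPropertiesPluriharmonici} the restriction $f|_{\tilde Q}$ remains pluriharmonic, and by \ref{item:propPropertiesPluriharmonicii} it is harmonic with respect to the Kähler metric $Q$ inherits from $M$. Unwinding the correspondence, this says that $h|_Q$ is a harmonic metric on $(E|_Q, \nabla|_Q)$, which (as recorded in \autoref{subsubsec:HiggsToRep}) is the same data as a Hermitian--Yang--Mills metric on the restricted Higgs bundle $(\cE|_Q, \varphi|_Q)$. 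The Chern classes of $\cE|_Q$ vanish by naturality. Applying the reverse direction of \autoref{thm:Simpson} on $Q$ then yields polystability of $(\cE|_Q, \varphi|_Q)$.

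The only genuine step is the invocation of Siu--Sampson; the rest is formal bookkeeping in the nonabelian Hodge dictionary. The statement implicitly requires $Q$ to be compact Kähler (with its induced metric), otherwise Simpson's theorem cannot be applied on $Q$. One might note that the argument gives more than claimed: the explicit polystable splitting of $(\cE|_Q, \varphi|_Q)$ is provided by the eigenspace decomposition of the restricted Hermitian--Yang--Mills metric $h|_Q$, so no new analytic existence result is needed on $Q$ beyond the uniqueness/reconstruction direction of the correspondence.
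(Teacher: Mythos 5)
Your proof is correct and follows essentially the same route as the paper's: apply Simpson's theorem to get a harmonic (Hermitian--Yang--Mills) metric, pass to the harmonic classifying map, upgrade to pluriharmonicity via Siu--Sampson, restrict to the submanifold where pluriharmonicity (hence harmonicity) is preserved, and invoke Simpson's theorem in the reverse direction. The extra bookkeeping you supply (vanishing Chern classes of the restriction, compactness of $Q$) is a welcome clarification but does not change the argument.
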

\begin{proof}
By Simpson's theorem, there exists a harmonic metric $h$ on $(\cE, \varphi)$. Let 
$f \colon \tilde{M} \to G/K$ indicate the associated classifying map. We have seen that $h$ being harmonic amounts to $f$ being harmonic. By the Siu--Sampson theorem, $f$ is pluriharmonic (\autoref{lem:AbelianSub}). It follows that the restriction of $f$ to $\tilde{N}$ is pluriharmonic (\autoref{prop:PropertiesPluriharmonic}). This means that $h$ is still harmonic as a metric on the Higgs bundle $(\cE, \varphi)$ restricted to $N$. By Simpson's theorem, it must be polystable.
\end{proof}

\subsubsection{More general groups}

We now briefly describe how to generalize the nonabelian Hodge correspondence to more general groups than $\GL(n,\C)$.
Refer to \cite{MR3221295} for details and \cite{DupontFB} for background material on fiber bundles. 

Let $G$ be a reductive\footnote{\ie{} with trivial unipotent radical. Equivalently, it admits a faithful semisimple representation \cite{MR3729270}. A connected affine algebraic group over $\C$ is reductive if and only if it has a reductive Lie algebra and its center is of multiplicative type.} 
complex algebraic group. The Riemann--Hilbert correspondence gives a bijection between group representations $\rho \colon \pi_1 M \to G$, $G$-local systems, and flat principal $G$-bundles.

Let $\rho \colon \pi_1 M \to G$ be a reductive representation. Denote by $E_G$ be the associated flat principal $G$-bundle and 
$\omega \in \cA^1(M, \mathfrak{g})$ its principal connection, which is flat: $\upd \omega + \frac{1}{2}[\omega, \omega] = 0$.
The Cartan decomposition $\mathfrak{g} = \mathfrak{k} \oplus \mathfrak{p}$ splits the connection as $\omega = \omega_\mathfrak{k} + \omega_\mathfrak{p}$. 

\begin{remark}
Since $G$ is complex reductive, it is isomorphic to the complexification of its maximal compact: $G \approx K^\C$ (the converse is also true). In particular, $\mathfrak{p} = i \mathfrak{k}$.
\end{remark}

\autoref{thm:Corlette} gives a $\rho$-equivariant harmonic map $f \colon \tilde{M} \to \mathbf{X} = G/K$. This induces a reduction of the structure group of $E_G$: we have $K$-bundle $E_K$ and an isomorphism of $G$-bundles $\iota \colon E_K \to E_G$. We denote $\iota^* \omega = A + \psi$ the splitting of the flat connection in $E_K$. Now $A \in \cA^1(E_K, \mathfrak{k})$ is a connection on $E_K$, and $\psi \in \cA^1(E_K, \mathfrak{p})$ descends as a one-form on $M$ with values in $E_K(\mathfrak{p})$, the bundle associated to $E_K$ via the isotropy representation $\Ad \colon K \to \GL(\mathfrak{p})$. 

The vanishing of the curvature of $\iota^* \omega$, broken into $\mathfrak{k}$ and $\mathfrak{p}$ components, gives the equations 
\begin{align}
 F^A + \frac{1}{2}[\psi, \psi] &= 0\\
 \upd_A \psi &= 0
\end{align}
(compare with \eqref{eq:Flatness1}), while the harmonicity of $f$ equates to $\upd_A^* \psi = 0$. We are now halfway in the nonabelian Hodge correspondence, having obtained a harmonic bundle (see \eqref{eq:NAH1}).

We can now apply the Siu--Sampson theorem; in fact, \autoref{lem:SSNAH} still holds verbatim.
The $(1,0)$-form $\varphi = \psi^{1,0}$ makes sense as a form with values in 
$E_K(\mathfrak{g})$ (since $\mathfrak{p}^\C = \mathfrak{g}$), but we deliberately extend the structure group from $K$ to $G$---to ``forget the harmonic metric'' in the Higgs bundle. As in \autoref{thm:SSApp}, we obtain a Higgs bundles, according to the definition:
\begin{definition}
 A $G$-Higgs bundle is a pair $(\cE_G, \varphi)$, where $\cE_G$ is a holomorphic principal $G$-bundle
 and $\varphi$ is a holomorphic section of $\cE_G(\mathfrak{g}) \otimes \Omega^1$ such that $[\varphi, \varphi] = 0$.
\end{definition}

\begin{remark}
When defining \emph{real} Higgs bundles, \ie{} when $G$ is a real reductive group, tracing back our discussion shows that the Higgs field should be defined as a holomorphic section of 
$E_{K^\C}(\mathfrak{p}^\C) \otimes \Omega^1$.
\end{remark}

We have successfully constructed a Higgs bundle starting from 
a reductive representation. Moreover, as in \autoref{thm:SSApp}, the flatness of the connection yields Hitchin's equation
\begin{equation} \label{eq:HitchinPrinc}
F^A - [\varphi, \tau(\varphi)] = 0
\end{equation}
where $\tau \colon \mathfrak{g} \to \mathfrak{g}$ is the Cartan involution, extended to $\cE_G(\mathfrak{g}) \otimes \cA^1$ by $\tau(A \otimes \alpha) = \tau(A) \otimes \bar{\alpha}$.

As expected, \autoref{thm:Simpson} also extends to this setting. First notice that a reduction of the structure group of $\cE$ from $G$ to $K$
induces a Chern connection $A$, so that \eqref{eq:HitchinPrinc} makes sense (see \cite{garciaprada2012hitchinkobayashi}).
\begin{theorem}[Simpson]
Let $(\cE, \varphi)$ be a $G$-Higgs bundles with vanishing Chern classes. There exists a reduction of the structure group of $\cE$ from $G$ to $K$
such that \eqref{eq:HitchinPrinc} holds if and only if $(\cE, \varphi)$ is polystable.
\end{theorem}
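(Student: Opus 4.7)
The plan is to handle the two implications with entirely different methods: the ``only if'' direction by a Chern--Weil argument in the vein of the classical Kobayashi--Lübke inequality, and the ``if'' direction by geometric analysis on the space of Hermitian reductions.

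\emph{From Hitchin's equation to polystability.} Suppose \eqref{eq:HitchinPrinc} holds for a reduction of $\cE_G$ to $K$. I would first reduce to the vector bundle case $G = \GL(n,\C)$ by picking a faithful representation $G \hookrightarrow \GL(n,\C)$: this preserves both polystability (because reductive representations of a reductive group are completely reducible) and Hitchin's equation (since the Cartan involution is respected). Let $\cF \subseteq \cE$ be a Higgs subsheaf; after saturating, $\cF$ is reflexive and hence a subbundle away from a codimension-$\geqslant 2$ singular set, and the $h$-orthogonal projection $\pi$ onto $\cF$ defines an $L^2_1$ section of $\End(\cE)$ satisfying $\pi^2=\pi=\pi^{*_h}$. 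The Chern--Weil formula for subbundles, combined with pairing Hitchin's equation against $\pi$ and using $\varphi$-invariance of $\cF$ (so $(1-\pi)\varphi\pi = 0$), yields
\begin{equation}
\deg(\cF) \;=\; \frac{1}{2\pi}\int_M \Big(\langle \Lambda F^A,\pi\rangle_h \;-\; \|\delbar_E\pi\|^2 \;-\; \|[\varphi,\pi]\|^2\Big)\,\vol_M \;\leqslant\; 0,
\end{equation}
with equality only if $\delbar_E\pi = 0$ and $[\varphi,\pi]=0$, exhibiting $\cF$ as a holomorphic Higgs direct summand. Semistability, and then polystability, follow.

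\emph{From polystability to Hitchin's equation.} This is the analytic heart. I would follow the Donaldson--Simpson strategy: fix a background reduction $h_0$, parametrize nearby metrics by $h_0$-self-adjoint endomorphisms $s$ via $h = h_0 e^s$, and define the Donaldson functional $\mathcal{M}(h_0,h)$ whose first variation is (up to sign) $\int_M \langle \delta s,\,\Lambda F^{A_h} - \Lambda[\varphi,\varphi^{*_h}]\rangle \,\vol_M$, so critical points are exactly Hermitian--Yang--Mills. A direct computation shows $\mathcal{M}$ is convex along the linear paths $t\mapsto h_0 e^{ts}$ (Simpson's geodesic convexity), so any critical point is an absolute minimum. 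I would then take a minimizing sequence $s_i$ and argue dichotomously: either $\|s_i\|_{L^\infty}$ stays bounded, in which case standard elliptic bootstrapping (ellipticity of $\Lambda F^{A_h} - \Lambda[\varphi,\varphi^{*_h}]$ modulo gauge) produces a smooth Hermitian--Yang--Mills limit; or the sequence blows up, in which case I extract the destabilizing object below.

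\emph{Main obstacle.} The crucial and hardest step is the blow-up analysis when $\|s_i\|_{L^\infty} \to \infty$. Following Uhlenbeck--Yau, the rescaled endomorphisms $u_i = s_i/\|s_i\|_{L^1}$ converge weakly in $L^2_1$ to a nonzero self-adjoint endomorphism $u_\infty$ whose spectrum lies in $[0,1]$ with at least two distinct eigenvalues. For each non-top eigenvalue $\lambda$, the spectral projection $\pi_\lambda = \mathbf{1}_{[0,\lambda]}(u_\infty)$ is a \emph{weakly holomorphic subbundle} in the sense of Uhlenbeck--Yau, and the uniform bounds on $[s_i,\varphi]$ implied by boundedness of $\mathcal{M}(s_i)$ force $[\pi_\lambda,\varphi] = 0$, so $\pi_\lambda$ defines a $\varphi$-invariant weakly holomorphic subbundle. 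The Uhlenbeck--Yau regularity theorem then realizes $\pi_\lambda$ as an honest coherent reflexive Higgs subsheaf $\cF_\lambda \subseteq \cE$, and a calculation with the rescaled Hitchin equation gives $\deg(\cF_\lambda) \geqslant 0$ (strictly, except when the Higgs bundle splits off a degree-zero Higgs subsheaf). In the stable case this contradicts stability; in the polystable case it forces a splitting $\cE = \cF \oplus \cF'$ of Higgs bundles, on which the argument is repeated inductively. Extension from $\GL(n,\C)$ to a general reductive $G$ is then routine: either transport the analysis into the associated bundle $\cE_G(\mathfrak{g})$, or invoke a faithful representation of $G$ and use uniqueness of the Hermitian--Yang--Mills metric (up to the centralizer action) to check that the resulting reduction lies in $K$.
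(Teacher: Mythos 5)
The paper does not prove this theorem: it states it and defers entirely to the literature (Hitchin \cite{MR887284}, Simpson \cite{MR944577}, and the survey \cite{MR3675465}), exactly as it did for the vector-bundle version \autoref{thm:Simpson}. So there is no in-paper argument to compare against; what you have written is an outline of the proof that actually lives in those references, namely the Donaldson--Uhlenbeck--Yau strategy as adapted by Simpson. As an outline it is faithful and correctly structured: Chern--Weil with orthogonal projections onto saturated Higgs subsheaves for the implication from Hitchin's equation to polystability, and the Donaldson functional, its convexity along the paths $h_0 e^{ts}$, and the Uhlenbeck--Yau blow-up analysis for the converse. Two caveats. First, the genuinely hard content --- the $C^0$ control of a minimizing sequence, the Uhlenbeck--Yau regularity theorem realizing an $L^2_1$ weakly holomorphic projection as a coherent reflexive subsheaf, and the degree estimate for the limiting subsheaf --- is invoked as a black box, so this is a road map rather than a proof; that is consistent with the paper's own choice, but it should be acknowledged as such. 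Second, the opening reduction to $\GL(n,\C)$ is glibber than it looks: the equivalence between polystability of a $G$-Higgs bundle and polystability of the Higgs vector bundle associated to a faithful representation is itself a theorem in the principal-bundle setting (of Ramanan--Ramanathan/Anchouche--Biswas type), not a consequence of complete reducibility of representations of reductive groups; and your displayed Chern--Weil identity should be organized around the full Hitchin--Simpson curvature $F^A + [\varphi,\varphi^{*_h}]$ rather than $F^A$ alone, since the terms $\langle \Lambda F^A,\pi\rangle_h$ and $\|[\varphi,\pi]\|^2$ are not independent once the invariance $(1-\pi)\varphi\pi=0$ is used --- as literally written the bookkeeping does not obviously close, even though the conclusion $\deg(\cF)\leqslant 0$ with equality forcing a Higgs splitting is the correct standard statement. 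Neither point is fatal, but both are exactly where a careful reader would push.
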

As before, this theorem gives an inverse of the nonabelian Hodge correspondence from representations to Higgs bundles, and the circle is complete.

\subsubsection{The character variety and the moduli space of Higgs bundles}

At this point we have described the nonabelian Hodge correspondence from group representations to Higgs bundles and vice-versa, but avoided talking about the moduli spaces of such objects. It turns out that their construction works strikingly well. Let us very briefly sketch the essential ideas at play.

There are natural morphisms between the objects under consideration, in particular:
\begin{enumerate}[\textbullet]
 \item Two representations are equivalent if they are conjugated by an element of $G$.
 \item Two Higgs bundles are equivalent if they differ by a gauge transformation\footnote{A gauge transformation
 of a complex vector bundle $E$ is a smooth section of $\GL(E)$. This definition easily generalizes to $G$-bundles.
 We are being sloppy when talking about gauge equivalence between Higgs bundles, because this assumes that they all have the same underlying complex
 vector bundle---which is true up to isomorphism.
 }.
\end{enumerate}
With these isomorphisms, the nonabelian Hodge correspondence is an \emph{equivalence of categories} between reductive representations and polystable Higgs bundles with vanishing Chern classes (\cite{SimpsonModuli}, see \cite{MR2400111} for a nice exposition of the rank one case). 
In particular there is a 1:1 correspondence between the isomorphism classes. That being said, it is possible to considerably strengthen this assertion.

Firstly, Simpson extended the equivalence of categories to \emph{all} representations $\pi_1 M \to G$ and \emph{all semistable} Higgs bundles
with vanishing Chern classes \cite[\S 3]{MR1179076}. Secondly, and more importantly, the quotient sets can beautifully be realized as algebraic or analytic quotients, yielding a very rich structure.

The best scenario is when $M$ and $G$ both have algebraic structures. The sets $\Hom^{\textrm{red}}(\pi_1 M, G) / G$ (reductive representations up to conjugation) and $\operatorname{Higgs}_0^{\textrm{ps}}(M,G) / \cG$ (polystable Higgs bundles with vanishing Chern classes up to equivalence) can then be realized as \emph{GIT quotients}. Consequently, they inherit an algebraic structure, and are the \emph{universal categorical quotients}. The first GIT quotient is straightforward to construct, see below. The second is more involved: refer to \cite{MR1123536, SimpsonModuli, MR1492538}.

\begin{remark}
The standard reference for GIT is \cite{MR1304906}. For our usage, a nice alternative is \cite{MR2408226}. 
I also point to my notes \cite[\S \textbf{B}]{LoustauAsheville} for an easy introduction to symplectic reduction and GIT.
\end{remark}

Let $X$ be an algebraic variety over $\C$ and let $G$ be a complex reductive algebraic group acting on $X$. For simplicity we assume $X$ is affine; this includes $X = \Hom(\pi, G)$ where $\pi$ is any finitely presented group.
Let $\C[X]$ denote the coordinate ring of regular functions on $M$
and $\C[X]^{G} \subseteq \C[X]$ the subalgebra of $G$-invariant functions.
$\C[X]^{G}$ is finitely generated provided $G$ is reductive by Nagata's theorem (answering Hilbert's 14th problem), therefore it is the coordinate ring 
of an affine algebraic set, namely $\Spec \C[X]^{G}$. This algebraic set is the \emph{GIT quotient} of $X$, denoted $X/\!/G$.
As a topological space, $X /\!/G$ is isomorphic to the set $X^{\textrm{ps}}/G$ of orbits of \emph{polystable} points, \ie{} whose orbit is closed, or to the set $X^{\textrm{ss}}/\sim$ of \emph{semistable}\footnote{Let us not give the general definition, but
point out that all points are semistable when $X$ is affine.} points up to equivalence, where two points are equivalent if their orbit closures interset. Indeed, it is a general fact that 
any semistable orbit contains a unique polystable orbit. 

We call the GIT quotients $\cM_{\textrm{B}} \coloneqq \Hom(\pi_1 M, G) /\!/ G$ and 
$\cM_{\textrm{Dol}} \coloneqq \operatorname{Higgs}_0(M,G) /\!/ \cG$ the \emph{$G$-character variety} or \emph{Betti moduli space}, and the \emph{moduli space of $G$-Higgs bundles (with vanishing Chern classes)} or \emph{Dolbeault moduli space}. Both are finite-dimensional quasiprojective varieties.
One can check that polystability in the sense of GIT coincides with reductivity for representations and polystability for Higgs bundles. 
Since the nonabelian Hodge correspondence induces a bijection between the sets $\Hom^\textrm{red}(\pi_1 M, G) / G$ and 
$\operatorname{Higgs}_0^\textrm{ps}(M,G) / \cG$, we obtain:
\begin{theorem}
 The nonabelian Hodge correspondence induces a bijection $\cM_{\textrm{B}} \isoto \cM_{\textrm{Dol}}$.
\end{theorem}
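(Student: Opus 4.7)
The plan is that almost all the work has been done in the preceding subsections; the theorem is essentially an assembly statement. The strategy is to observe that both GIT quotients $\cM_{\textrm{B}}$ and $\cM_{\textrm{Dol}}$ have underlying sets canonically identified with polystable orbits, and that these polystable orbits are exactly the objects matched by the correspondence constructed earlier.

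First I would invoke the GIT discussion immediately preceding the theorem: by the universal property of affine GIT quotients, the underlying set of $\cM_{\textrm{B}}$ is in natural bijection with $\Hom^{\textrm{red}}(\pi_1 M, G)/G$ (polystable $=$ reductive, as noted in footnote \ref{foot:Reductive}), and by Simpson's construction \cite{SimpsonModuli, MR1492538} the underlying set of $\cM_{\textrm{Dol}}$ is in natural bijection with $\operatorname{Higgs}_0^{\textrm{ps}}(M,G)/\cG$. Thus it suffices to produce a bijection between these two sets of polystable orbits.

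Next I would assemble the forward map. Starting from $[\rho]$ reductive, Riemann--Hilbert yields a flat principal $G$-bundle $(E_G, \omega)$; Corlette's existence theorem (\autoref{thm:Corlette}) produces a $\rho$-equivariant harmonic map $\tilde M \to G/K$, equivalently a harmonic reduction of structure group to $K$; the Siu--Sampson input (\autoref{lem:SSNAH}, packaged in \autoref{thm:SSApp} and its principal-bundle extension) then asserts that $\delbar_A$ defines a holomorphic structure $\cE_G$ and $\varphi = \psi^{1,0}$ is a holomorphic Higgs field with $[\varphi,\varphi]=0$, producing a polystable $G$-Higgs bundle $(\cE_G, \varphi)$. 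The reverse map uses Simpson's theorem (\autoref{thm:Simpson}): from polystable $(\cE_G, \varphi)$ one obtains a Hermitian--Yang--Mills reduction to $K$, and $\nabla = A + \varphi + \tau(\varphi)$ is flat by Hitchin's equation \eqref{eq:HitchinPrinc}, giving back a representation $\rho$, which is reductive because, should its Zariski closure be contained in a proper parabolic with no compatible Levi containing it, the Higgs bundle could not be polystable.

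The main subtle step, and the one I expect to take most care, is well-definedness and mutual inverseness at the level of equivalence classes. The harmonic reduction produced by Corlette is unique only up to the centralizer of $\rho(\pi_1 M)$ in $G$, and the Hermitian--Yang--Mills metric provided by Simpson is unique only up to automorphisms of the polystable Higgs bundle; one must check that these ambiguities are precisely absorbed by gauge equivalence on the Higgs side and conjugation on the representation side respectively, so that $[\rho]\mapsto [(\cE_G,\varphi)]$ and its inverse descend to well-defined maps between the orbit sets. Once this is verified, the two constructions are inverse to one another because both pass through the intermediate data of a harmonic bundle $(E_G, \nabla, h)$, whose two decompositions $\nabla = \nabla_h + \psi_h$ and $(\cE_G, \varphi)$ with $\varphi = \psi_h^{1,0}$ uniquely determine each other, as detailed in \autoref{subsubsec:HiggsToRep}. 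This completes the bijection $\cM_{\textrm{B}} \isoto \cM_{\textrm{Dol}}$.
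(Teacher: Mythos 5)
Your proposal is correct and follows exactly the route the paper takes: the paper's ``proof'' is simply the paragraph preceding the theorem, which identifies the underlying sets of the two GIT quotients with the polystable orbit sets $\Hom^{\textrm{red}}(\pi_1 M, G)/G$ and $\operatorname{Higgs}_0^{\textrm{ps}}(M,G)/\cG$ and then invokes the Corlette--Siu--Sampson--Simpson chain assembled in the preceding subsections. If anything you are more careful than the paper, which silently absorbs the well-definedness issues (uniqueness of the harmonic map up to the centralizer, of the Hermitian--Yang--Mills metric up to automorphisms) into the cited equivalence of categories.
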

We shall see that this bijection is a real-analytic map between varieties, but not complex-analytic.

In addition to giving a natural framework for the nonabelian Hodge correspondence, the benefit of constructing the moduli spaces
is to provide additional structure and features. For instance, we have seen that GIT quotients provide an algebraic structure. 
Another remarkable feauture is their relation to symplectic reduction (this is essentially the Kempf--Ness theorem, see \cite[Chap.\ 8]{MR1304906}). 

Goldman \cite{MR762512}, following Atiyah--Bott \cite{MR702806},
adapted the technique of symplectic reduction to the infinite-dimensional space of connections in a vector bundle, interpreting the curvature of a connection as a moment map for the action of the gauge group. The main result of \cite{MR762512} is the existence of a natural complex symplectic structure
in the character variety $\cM_{\textrm{B}}$ when $\dim_\C M = 1$. 

Hitchin \cite{MR887284} used similar techniques to construct the moduli space of Higgs bundles $\cM_{\textrm{Dol}}$ as a hyper-Kähler quotient, 
and showed that the nonabelian Hodge correspondence $\cM_{\textrm{B}} \isoto \cM_{\textrm{Dol}}$ is complex-analytic with respect to one of the 
complex structures.
Deligne (unpublished, see \cite{MR1492538}) realized that the twistor space of this hyper-Kähler structure can elegantly be realized as the moduli space
of \emph{$\lambda$-connections}, which Simpson calls \emph{Hodge moduli space} $\cM_{\textrm{Hod}}$. In this picture, $\cM_{\textrm{B}}$ and $\cM_{\textrm{Dol}}$ are two special fibers of the holomorphic fiber bundle $\cM_{\textrm{Hod}} \to \CP^1$.

We finally mention two other remarkable features of the moduli space of Higgs bundles $\cM_{\textrm{Dol}}$, both going back to Hitchin \cite{MR887284}.
Via the nonabelian Hodge correspondence, they also describe fascinating features of the character variety:
\begin{enumerate}[\textbullet]
 \item \emph{The $\C^*$-action.} There is an action of $\C^*$ on Higgs bundles by $\lambda \cdot (\cE, \varphi) = (\cE, \lambda \varphi)$, inducing
 an action on $\cM_{\textrm{Dol}}$. When transported
 to the character variety , this $\C^*$-action should be interpreted as a Hodge structure
 on $\operatorname{H}^1(M,G) = \Hom(\pi_1 M, G)/G$ that generalizes the Hodge decomposition of $\operatorname{H}^1(M, \C)$ (see \autoref{subsubsec:ExtensionNAGroups} and \autoref{rem:HodgeStructure}).
 Simpson  additionally showed  that the fixed points of the action correspond to \emph{variations of Hodge structure} \cite{MR1179076}.
 \item \emph{The Hitchin fibration.} For simplicity, we take $G = \GL(n, \C)$. 
 The coefficients of the characteristic polynomial of the Higgs field defines a holomorphic fibration
 \begin{equation}
  \cM_{\textrm{Dol}} \to \bigoplus_{j=0}^n \operatorname{H}^0\big(M, \left(\Omega^1\right)^{\otimes j}\big)\,.
 \end{equation}
 The study of this map and its fibers by various authors has produced an abundance of interesting results concerning both Higgs bundles
and group representations. It was also famously used by Ngô \cite{MR2653248} 
to prove the \emph{fundamental lemma}, a key result in the Langlands program.
\end{enumerate}

\cleardoublepage\phantomsection 
\newgeometry{top=0.09\paperheight, bottom=0.10\paperheight, left=0.09\paperwidth, right=0.08\paperwidth}
\hypersetup{urlcolor=black}
{\raggedright \printbibliography[heading=bibintoc, title={References}]}
\restoregeometry

\end{document}